\theoremstyle{definition}
\newtheorem{definition}{Definition}[section]
\newtheorem{remark}[definition]{Remark}
\newtheorem{example}[definition]{Example}
\newtheorem{notation}[definition]{Notation}
\theoremstyle{plain}
\newtheorem{theorem}[definition]{Theorem}
\newtheorem{lemma}[definition]{Lemma}
\newtheorem{corollary}[definition]{Corollary}
\newtheorem{proposition}[definition]{Proposition}
\begin{document}
\title{Towards a classification of $1$-homogeneous distance-regular graphs with positive intersection number $a_1$}

\author[a, b]{Jack H. Koolen}
\author[a]{Mamoon Abdullah}
\author[c]{Brhane Gebremichel}
\author[d]{Jae-Ho Lee\thanks{Corresponding author}}

\affil[a]{\small{School of Mathematical Sciences,
University of Science and Technology of China,
Hefei, Anhui, 230026, PR China.}}
\affil[b]{\small{CAS Wu Wen-Tsun Key Laboratory of Mathematics,
University of Science and Technology of China,
Hefei, Anhui, 230026, PR China.}}
\affil[c]{\small{Department of Mathematics, Adigrat University, Adigrat,
Tigray, 7040, Ethiopia.}}
\affil[d]{\small{Department of Mathematics and Statistics, University of North Florida, Jacksonville, FL 32224, U.S.A}}


\date{}
\maketitle
\newcommand\blfootnote[1]{%
\begingroup
\renewcommand\thefootnote{}\footnote{#1}%
\addtocounter{footnote}{-1}%
\endgroup}
\blfootnote{E-mail addresses: {\tt koolen@ustc.edu.cn}~(J.H.~Koolen),
~{\tt mamoonabdullah@hotmail.com}~(M.~Abdullah),\\
~{\tt brhaneg220@gmail.com}~(B.~Gebremichel),
~{\tt jaeho.lee@unf.edu}~(J.-H.~Lee)}

\begin{abstract}
\noindent
Let $\Gamma$ be a graph with diameter at least two. 
Then $\Gamma$ is said to be $1$-homogeneous (in the sense of Nomura) whenever for every pair of adjacent vertices $x$ and $y$ in $\Gamma$, the distance partition of the vertex set of $\Gamma$ with respect to both $x$ and $y$ is equitable, and the parameters corresponding to equitable partitions are independent of the choice of $x$ and $y$.
Assume that $\Gamma$ is $1$-homogeneous distance-regular with intersection number $a_1>0$ and diameter $D\geqslant 5$.
Define $b=b_1/(\theta_1+1)$, where $b_1$ is the intersection number and $\theta_1$ is the second largest eigenvalue of $\Gamma$.
We show that if intersection number $c_2$ is at least $2$, then $b\geqslant 1$ and one of the following (i)--(vi) holds: 
(i) $\Gamma$ is a regular near $2D$-gon, 
(ii) $\Gamma$ is a Johnson graph $J(2D,D)$, 
(iii) $\Gamma$ is a halved $\ell$-cube with $\ell \in \{2D,2D+1\}$, 
(iv) $\Gamma$ is a folded Johnson graph $\bar{J}(4D,2D)$, 
(v) $\Gamma$ is a folded halved $4D$-cube, 
(vi) the valency of $\Gamma$ is bounded by a function of $b$.
Using this result, we characterize $1$-homogeneous graphs with classical parameters and $a_1>0$, as well as tight distance-regular graphs.

\bigskip
\noindent
\textbf{Keywords:} distance-regular graph, $1$-homogeneous, local graph, classical parameters, tight graph

\medskip
\noindent \textbf{Mathematics Subject Classification:} 05E30, 05C50
\end{abstract}

\section{Introduction}
In this paper, we study distance-regular graphs that have the $1$-homogeneous property in the sense of Nomura \cite{Nomura1994}.
To motivate our results, we recall some preliminaries and background on $1$-homogeneous distance-regular graphs. 
For more details, refer to \cite{BCN, DaKoTa, Nomura1994}.

\smallskip
Throughout this paper, let $\Gamma$ denote a finite, undirected, connected, and simple graph. 
Let $V(\Gamma)$ denote the vertex set of $\Gamma$.
For two vertices $x,y \in V(\Gamma)$, the \emph{distance} $d(x,y)$ is the length of a shortest path from $x$ to $y$ in $\Gamma$.
The \emph{diameter} of $\Gamma$ is the maximum value of $d(x,y)$ for all pairs of $x,y \in V(\Gamma)$.
Let $D$ denote the diameter of $\Gamma$.
For an integer $0\leqslant i \leqslant D$ and a vertex $x \in V(\Gamma)$, let $\Gamma_i(x)$ denote the set of vertices in $\Gamma$ at distance $i$ from $x$.
Abbreviate $\Gamma(x)=\Gamma_1(x)$.
The subgraph of $\Gamma$ induced on the set $\Gamma(x)$ is called the \emph{local graph} of $\Gamma$ at $x$.
The graph $\Gamma$ is called \emph{locally} $\mathcal{P}$ whenever every local graph of $\Gamma$ has the property $\mathcal{P}$ (or belongs to the family $\mathcal{P}$).
For example, we might say that a graph is locally connected or locally a strongly regular graph.
For a pair of vertices $x,y \in V(\Gamma)$ with $d(x,y)=2$, the subgraph of $\Gamma$ induced on the set $\Gamma(x)\cap \Gamma(y)$ is called the \emph{$\mu(x,y)$-graph} of $\Gamma$. 
If this graph does not depend on the choice of $x$ and $y$ (up to isomorphism), then we simply call it the $\mu$-graph of $\Gamma$.
For an integer $k\geqslant 0$, we say that $\Gamma$ is \emph{regular with valency} $k$ (or \emph{$k$-regular}) if $|\Gamma(x)|=k$ for every $x\in V(\Gamma)$. 
For an integer $0\leqslant i \leqslant D$ and for a pair $x, y \in V(\Gamma)$ with $d(x,y)=i$ we define 
\begin{equation}\label{sets:CAB}
	C_i(x,y) := \Gamma_{i-1}(x) \cap \Gamma(y), \quad 
	A_i(x,y) := \Gamma_{i}(x) \cap \Gamma(y), \quad 
	B_i(x,y) :=\Gamma_{i+1}(x) \cap \Gamma(y),
\end{equation}
where $C_{0}(x,y):= \varnothing$ and $B_{D}(x,y):=\varnothing$.
Observe that $\Gamma(y)$ is the disjoint union of the vertex sets $C_i(x,y)$, $A_i(x,y)$, $B_i(x,y)$.
We say $\Gamma$ is \emph{distance-regular} whenever the cardinalities 
\begin{equation}\label{int numbers}
	c_i =|C_i(x,y)|, \qquad a_i = |A_i(x,y)|, \qquad b_i=|B_i(x,y)| \qquad (0\leqslant i \leqslant D)
\end{equation}
are constants and do not depend on the choice of $x$ and $y$.
Note that $c_0=a_0=b_D=0$, and $c_1=1$. 
Additionally, $\Gamma$ is regular with valency $k=b_0$, and $a_i+b_i +c_i = k$ $(0\leqslant i \leqslant D)$.
The numbers $a_i, b_i, c_i$ in \eqref{int numbers} are called the \emph{intersection numbers} of $\Gamma$, and the array $\{b_0, b_1,\ldots , b_{D-1}; c_1, c_2,\ldots , c_D\}$ is called the {\em intersection array} of  $\Gamma$.
We note that a distance-regular graph with diameter $D$ has exactly $D+1$ distinct eigenvalues \cite[Proposition 2.6]{DaKoTa}.

\begin{notation}
Unless otherwise specified, whenever we denote $\Gamma$ as a distance-regular graph, we use the following notation: $\Gamma$ has diameter $D$, valency $k$, and distinct eigenvalues $\theta_0 > \theta_1 > \ldots > \theta_D$. 
Moreover, the intersection numbers of $\Gamma$ are denoted by $\{c_i\}^D_{i=1}$, $\{a_i\}^D_{i=0}$, $\{b_i\}^{D-1}_{i=0}$, as shown in \eqref{int numbers}.
\end{notation}

\smallskip
Next, we recall the notion of the $i$-homogeneous property as introduced by Nomura \cite{Nomura1994}.
Let $\Gamma$ be a connected graph.
A partition $\pi = \{ C_1, C_2, \ldots, C_p \}$ of $V(\Gamma)$ is called \emph{equitable} whenever, for all $1 \leqslant i, j \leqslant p$, the number of neighbors of a vertex $x\in C_i$ in the set $C_j$ is independent of the choice of $x$.
In other words, for each pair of subsets $C_i$ and $C_j$ in $\pi$, the number $c_{ij} := | \Gamma(x) \cap C_j |$ is constant for all $x\in C_i$.  
These numbers $\{c_{ij}\}_{1\leqslant i,j \leqslant p}$ are called the \emph{parameters} of $\pi$.
We say $\Gamma$ has the \emph{$i$-homogeneous} property whenever, for every pair of vertices $x$ and $y$ at distance $i$, the partition of $V(\Gamma)$ according to the path-length distance to both $x$ and $y$ is equitable, and the parameters corresponding to equitable partitions are independent of the choice of $x$ and $y$; see Section \ref{sec:1-homoDRGs}. 
Graphs with the $i$-homogeneous property are simply said to be $i$-homogeneous.
Note that $\Gamma$ is $0$-homogeneous if and only if it is a distance-regular graph.
Moreover, if $\Gamma$ is $1$-homogeneous, then it is a distance-regular graph and also locally strongly regular.

\smallskip
In this paper, we focus on $1$-homogeneous distance-regular graphs.
Examples of such graphs include the Johnson graphs $J(2D, D)$, the bipartite distance-regular graphs, and the regular near $2D$-gons. 
We have some comments about the history of $1$-homogeneous distance-regular graphs.
Over the years, the $1$-homogeneous property has received considerable attention and has been used in the study of distance-regular graphs across various contexts, including tight distance-regular graphs \cite{JKT2000, KLLLLT2023+}, distance-regular graphs which support a spin model \cite{CurtinNomura2004}, the Terwilliger algebras \cite{CurtinNomura2005}, and $Q$-polynomial distance-regular graphs \cite{Miklavic2004}.
Juri\v{s}i\'{c} and Koolen \cite{JK2000-1, JK2003, JK2011} explored $1$-homogeneous graphs whose $\mu$-graphs are a complete multipartite graph $K_{t\times n}$, $n\geqslant 1$ (i.e., the complement of $t$ copies of the complete graph $K_n$).
In \cite{JK2000-1}, they introduced the CAB property to study the local structures of distance-regular graphs and used this property to characterize $1$-homogeneous graphs with $a_1 > 0$. 
Also, they classified $1$-homogeneous graphs with $c_2\geqslant 2$ whose $\mu$-graphs are $K_{t\times 1}$, i.e., $1$-homogeneous Terwilliger graphs.
In \cite{JK2003}, they classified $1$-homogeneous graphs when $n=2$, i.e., when the $\mu$-graphs are $K_{t\times 2}$ (Cocktail Party graphs).
In their subsequent study \cite{JK2007}, they extended this work to distance-regular graphs whose $\mu$-graphs are $K_{t\times n}$ $(n\geqslant 2)$.
Juri\v{s}i\'{c}, Munemasa, and Tagami \cite{JMT2010} investigated a more general case, namely, graphs (not necessarily distance-regular) whose $\mu$-graphs are $K_{t\times n}$.
Moreover, several studies examined distance-regular graphs whose $\mu$-graphs are complete multipartite; see \cite{JK2000-2,JK2011, KLLLLT2023+}.
These studies have contributed to the research on classifying $1$-homogeneous distance-regular graphs with complete multipartite $\mu$-graphs, which is an important problem.

\smallskip
In this paper, one significance of our result lies in making substantial progress towards a classification of $1$-homogeneous distance-regular graphs with $a_1 > 0$. 
This result extends to a broader context, covering $1$-homogeneous graphs whose $\mu$-graphs are complete multipartite, as discussed in the preceding paragraph.
We now present the main result of this paper.

\begin{theorem}\label{thm:main}
Let $\Gamma$ be a $1$-homogeneous distance-regular graph with diameter $D \geqslant 5$ and $a_1 > 0$. 
Define $b = {b_1}/({\theta_1 + 1})$. 
Then, either $c_2=1$, or $b\geqslant 1$ and one of the following holds:
\begin{enumerate}[\normalfont(i)]
	\item $\Gamma$ is a regular near $2D$-gon.
	\item $\Gamma$ is a Johnson graph $J(2D,D)$.
	\item $\Gamma$ is a halved $\ell$-cube with $\ell \in \{2D,2D+1\}$.
	\item $\Gamma$ is a folded Johnson graph $\bar{J}(4D,2D)$.
	\item $\Gamma$ is a folded halved $4D$-cube.
	\item The valency $k$ of $\Gamma$ is bounded by a function $F(b)$ of $b$, {\rm i.e.}, $k \leqslant F(b)$, where
	\begin{equation}\label{poly: F}
	F(b) = 16b^{10} + 80b^9 + 192b^8 + 256b^7 + 192b^6 + 72b^5 + 20b^4 + 24b^3 + 8b^2 + 1. 
	\end{equation}
\end{enumerate}
\end{theorem}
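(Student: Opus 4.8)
The plan is to reduce everything to the structure of the local graph $\Delta=\Gamma(x)$ and then invoke the classification of strongly regular graphs with bounded smallest eigenvalue. Since $\Gamma$ is $1$-homogeneous with $a_1>0$, the local graph $\Delta$ is strongly regular on $k$ vertices with valency $a_1$; denote its nontrivial eigenvalues by $\tilde r\geqslant\tilde s$. By the theory of local eigenvalues (Terwilliger; Juri\v{s}i\'{c}--Koolen--Terwilliger) one has $\tilde s\geqslant -1-b_1/(\theta_1+1)=-1-b$. First I would dispose of the degenerate subcase $\tilde s=-1$: here $\Delta$ is a disjoint union of cliques, so $\Gamma$ is a regular near $2D$-gon (conclusion (i)), and in this subcase $b\geqslant 1$ is verified directly from the intersection array. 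Otherwise $\tilde s\leqslant -2$, and setting $m:=-\tilde s$ gives the key inequalities $2\leqslant m\leqslant b+1$, so in particular $b\geqslant 1$.

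The second step is to apply Neumaier's classification of strongly regular graphs with smallest eigenvalue $-m$: $\Delta$ is either (a) a complete multipartite graph, (b) a Latin square graph $LS_m(\,\cdot\,)$, (c) the block graph of a Steiner system $S(2,m,\,\cdot\,)$, or (d) one of finitely many sporadic graphs whose number of vertices is bounded by an explicit polynomial in $m$. In case (d), since $|V(\Delta)|=k$ and $m\leqslant b+1$, substituting Neumaier's bound yields $k\leqslant F(b)$, which is conclusion (vi); the explicit polynomial $F(b)$ arises by making the claw bound and its consequences explicit and replacing $m$ by $b+1$.

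It remains to treat the infinite families (a)--(c), where $\Delta$ has unbounded size and the global graph must be pinned down exactly. Here I would combine the known recognition theorems for distance-regular graphs with a prescribed local structure with the extra rigidity supplied by $1$-homogeneity, in particular the fact that the $\mu$-graph is complete multipartite and the CAB relations among the intersection parameters. For $m=2$ the relevant strongly regular graphs are the grid $L_2(n)$, the triangular graph $T(n)$, and the cocktail-party graph: a $1$-homogeneous graph that is locally $L_2(n)$ should be forced to be the Johnson graph $J(2D,D)$ or its antipodal quotient the folded Johnson graph $\bar J(4D,2D)$ (conclusions (ii), (iv)), while one that is locally $T(n)$ should be a halved $\ell$-cube with $\ell\in\{2D,2D+1\}$ or the folded halved $4D$-cube (conclusions (iii), (v)); the remaining complete multipartite local graphs feed back into (i) or into the bounded-valency conclusion. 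For $m\geqslant 3$ the plan is to show, using the $1$-homogeneous parameter recurrences together with feasibility (integrality and interlacing) constraints, that no locally Latin-square, locally Steiner, or locally complete multipartite graph survives with $D\geqslant 5$ and $c_2\geqslant 2$ beyond what already falls under conclusion (vi).

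I expect the main obstacle to be this last recognition step. Establishing $b\geqslant 1$ and reducing to Neumaier's list is comparatively routine once the local eigenvalue bound is in place, and the sporadic case is a direct substitution. The difficulty concentrates in (i) identifying the exact global graph from an infinite local family --- proving that being locally a grid or locally triangular, under $1$-homogeneity, forces precisely the Johnson and halved-cube graphs and their folded quotients and nothing else --- and (ii) eliminating the $m\geqslant 3$ families, where one must rule out a priori feasible but nonexistent parameter sets by exploiting the interaction between the local eigenvalues, the complete-multipartite $\mu$-graph, and the global intersection array over the full diameter $D\geqslant 5$.
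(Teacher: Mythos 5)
Your skeleton is the same as the paper's: analyze the local graph, bound its smallest eigenvalue by $-1-b$, split off the disjoint-union-of-cliques case as the near-polygon conclusion, invoke the Neumaier--Sims classification, absorb the sporadic case into a valency bound, and finish the infinite families by recognition theorems. However, the proposal leaves unproven exactly the steps that constitute the paper's actual work, and two of these are genuine gaps rather than routine details. First, the recognition theorems you want to invoke are not applicable without first determining $c_2$: the grid recognition lemma (locally $(p\times q)$-grid $\Rightarrow$ Johnson or folded Johnson) requires $c_2=4$, and the Cocktail-Party $\mu$-graph classification requires knowing the $c_2$-graph is an octahedron. The paper obtains this from a $\operatorname{CAB}_2$ integrality argument: the trace of the quotient matrix of the $\operatorname{CAB}_2$ partition yields $\alpha_2=m+(m-1)(c_2-m^2)/(n-m)$ in the Latin square case (and the analogue for Steiner graphs), and integrality of $\alpha_2$ combined with the bound $c_2\leqslant(4b^2+1)(\mu'+1)$ (which comes from a $K_{2,t}$-subgraph bound, not from Neumaier) forces $c_2=m^2$, resp.\ $c_2=m(m+1)$, once $k$ exceeds an explicit polynomial in $b$. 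This is also where $F(b)$ and $G(b)$ actually come from: $F(b)$ is engineered to dominate both the Steiner-case integrality threshold and the sporadic bound $\varphi(m)<m^{10}$, so your account of $F(b)$ as arising solely from the claw bound with $m\mapsto b+1$ would not produce a polynomial for which $k>F(b)$ implies conclusions (i)--(v). Second, eliminating $m\geqslant3$ is not a feasibility computation one can wave at; the paper imports a nontrivial result of Koolen et al.\ (if $\Gamma$ is locally Latin square, resp.\ Steiner, with $m\geqslant3$ and $k>m^2$, resp.\ $k>m(m+1)$, then $c_2\neq m^2$, resp.\ $c_2\neq m(m+1)$), which contradicts the forced value of $c_2$ and pins $m=2$.

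Two smaller points. Neumaier's classification applies to strongly regular graphs with \emph{integral} smallest eigenvalue, so you must separately exclude the case that the local graph is a conference graph; the paper does this by showing a distance-regular graph that is locally a conference graph is a Taylor graph (diameter $3$, contradicting $D\geqslant5$), which itself uses the Koolen--Park classification of graphs with $a_1\geqslant k/2-1$. Also, your claim that the locally complete multipartite case ``feeds back into (i) or into the bounded-valency conclusion'' is wrong: if $\Gamma$ is locally $K_{t\times m}$ then $\Gamma$ is itself $K_{(t+1)\times m}$, of diameter two, so this case is simply impossible for $D\geqslant5$.
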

The proof of this theorem appears in Section \ref{sec:1-homoDRGs}.

\begin{remark} 
(i) We comment on the cases where the diameter is $3$ or $4$ for $1$-homogeneous distance-regular graphs with $a_1 > 0$.
For $D = 3$, the Taylor graphs are examples of $1$-homogeneous graphs with $a_1 > 0$, as they are tight distance-regular graphs. 
For $D = 4$, examples include the Patterson graph and the family $AT_4(p, q, r)$ of antipodal tight graphs with parameters $p, q, r$; see \cite{JK2002, JK2011}. 
It remains an open question whether there exist infinitely many such graphs of diameter $4$.

(ii) In the case where $\Gamma$ is a regular near $2D$-gon as in Theorem~\ref{thm:main}, we can further refine the classification under additional conditions: 
if $c_2 \geqslant 3$, then $\Gamma$ is a dual polar graph; 
if $c_2 = 2$ and $c_3 = 3$, then $\Gamma$ is a Hamming graph; see \cite[Theorem~9.11]{DaKoTa}.
\end{remark}

\begin{remark}
In \cite{KLLLLT2023+}, Koolen et al.\ proposed a conjecture stating that for a tight distance-regular graph with $D \geqslant 3$ and $b = b_1 / (1 + \theta_1) \geqslant 2$, the diameter $D$ is bounded by a function of $b$; see \cite[Conjecture 7.5]{KLLLLT2023+}. We prove this conjecture in Section~\ref{sec:tightDRGs} using Theorem~\ref{thm:main}.
\end{remark}

This paper is organized as follows.
In Section \ref{sec:SRG}, we review strongly regular graphs and their properties. 
We discuss a classification of strongly regular graphs with smallest eigenvalue $\leqslant -2$.
In Section \ref{sec:DRG locally SRG}, we discuss distance-regular graphs that are locally strongly regular. 
We establish a bound on the intersection number $c_2$ for those graphs. 
We also show that when a distance-regular graph is locally a conference graph, it is a Taylor graph.
In Section \ref{Sec:CABprop}, we recall the $\operatorname{CAB}_i$ property of distance-regular graphs. 
We focus on the $\operatorname{CAB}_2$ property and examine distance-regular graphs that possess this property.
In Section \ref{sec:1-homoDRGs}, we discuss $1$-homogeneous distance-regular graphs with $a_1 > 0$. 
We prove our main result, Theorem \ref{thm:main}.
In Section \ref{sec:DRG-cp}, we discuss $1$-homogeneous distance-regular graphs with classical parameters and $a_1 > 0$.
Finally, we conclude the paper in Section \ref{sec:tightDRGs} with some comments on tight distance-regular graphs.

\section{Strongly regular graphs with smallest eigenvalue $-m$}\label{sec:SRG}
In this section, we review properties of strongly regular graphs with smallest eigenvalue $-m$, where $m > 0$, and discuss their classification.
First, we recall the definition of a strongly regular graph.
Let $\Gamma$ be a $k$-regular graph with $v$ vertices. 
The graph $\Gamma$ is called \emph{strongly regular} with parameters $(v,k,\lambda, \mu)$ if each pair of distinct adjacent (resp. non-adjacent) vertices has exactly $\lambda$ (resp. $\mu$) common neighbors. 
Suppose $\Gamma$ is a strongly regular graph with smallest eigenvalue $s$. 
It is well known that $\Gamma$ satisfies $s \leqslant -2$, except in the cases where $\Gamma$ is a disjoint union of cliques (with $s = -1$) or a pentagon (with $s = (-1 - \sqrt{5})/{2}$) \cite[Section 1.1.10]{BrVM2022}.

\smallskip
Let $\Gamma$ be a strongly regular graph with parameters $(v,k,\lambda,\mu)$ and diameter two.
We denote the eigenvalues of $\Gamma$ as $k>r>s$. 
It is known that $k,r,s$ are integers except when $\Gamma$ is a \emph{conference} graph, i.e., a strongly regular graph with parameters $(4\mu +1, 2\mu, \mu-1,\mu)$ \cite[Lemma 10.3.3]{GR01}. 
The parameters $v, k, \lambda$ of $\Gamma$ can be expressed in terms of $r$, $s$, and $\mu$ as follows:
\begin{equation}\label{SRG parameters}
	  v = \frac{(k-r)(k-s)}{\mu}, \qquad 
	  k= \mu -rs, \qquad 
	  \lambda = \mu +r +s,
\end{equation}
cf. \cite[Theorem 1.3.1]{BCN}.
We present two examples of strongly regular graphs that will be used in this paper.

\begin{example}
A transversal design $\operatorname{TD}(m;n)$ is a partial linear space with $mn$ points and $m + n^2$ lines, with $m$ lines (called groups) of size $n$ forming a partition of the point set, and $n^2$ lines (called blocks) of size $m$, each meeting every group in a single point; cf. \cite[Section 8.4.1]{BrVM2022}.
The line graph of a transversal design $\operatorname{TD}(m;n)$ with $2 \leqslant m \leqslant n$ is called a \emph{Latin square} graph $\operatorname{LS}_m(n)$; cf. \cite[Section 8.4.2]{BrVM2022}.
Note that $\operatorname{LS}_m(n)$ is isomorphic to the block graph of an orthogonal array $\operatorname{OA}(m,n)$.
A Latin square graph $\operatorname{LS}_m(n)$ is strongly regular with parameters
\begin{equation}\label{eq:LS parameters}
	(n^2, \quad m(n-1), \quad (m-1)(m-2)+n-2, \quad m(m-1))
\end{equation}
and eigenvalues $m(n-1)>n-m>-m$.
\end{example}

\begin{example}
A Steiner system $S(2,m,n)$ is a $2$-$(n,m,1)$ design, that is, a collection of $m$-subsets of a $n$-set in which each pair of elements is contained in exactly one $m$-set.
In this context, the elements of the $n$-set are referred to as points, and the $m$-sets are referred to as blocks of the system.
The \emph{block graph of a Steiner system} $S(2, m, n)$ is defined as the graph whose vertices are the blocks of the system, where two vertices are adjacent whenever they intersect at exactly one point.
The block graph of a Steiner system $S(2, m, n)$ with $n>m\geqslant 2$ is strongly regular with parameters
\begin{equation}
	\left(\frac{n(n-1)}{m(m-1)}, \quad \frac{m(n-m)}{m-1}, \quad (m-1)^2 +\frac{n-1}{m-1} -2, \quad m^2\right).
\end{equation}
The eigenvalues of this graph are $\frac{m(n-m)}{m-1} > \frac{n-m^2}{m-1} > -m$.
In particular, the block graph of a Steiner system $S(2, m, mn + m -n)$ is called a \emph{Steiner graph} $S_m(n)$. 
Note that a Steiner graph $S_m(n)$ has parameters
\begin{equation}\label{eq:SG parameters}
	\left( \frac{(m+n(m-1))(n+1)}{m}, \quad mn, \quad m^2-2m+n, \quad m^2 \right)
\end{equation}
and eigenvalues $mn>n-m>-m$.
\end{example}

Next, we recall some known results on the classification of strongly regular graphs whose smallest eigenvalue is at most $-2$.
For the rest of this section, let $\Gamma$ be a strongly regular graph with parameters $(v, k, \lambda, \mu)$ and integral eigenvalues $k > r > s$.
For our purposes, we set 
\begin{equation}\label{Def:m,n}
	m := -s, \qquad n = r - s  \qquad \qquad (m \geqslant 2).
\end{equation}
If $\Gamma$ is primitive, that is, both $\Gamma$ and its complement are connected, the parameter $\mu$ is bounded above by a function of $m$:
\begin{equation}\label{eq:mu-bound}
	\mu \leqslant m^3 (2m -3).
\end{equation}
We call \eqref{eq:mu-bound} the $\mu$-bound; see \cite[Theorem 3.1]{Neumaier.1979}.
Let $f(m,\mu) = \frac{1}{2}m(m-1)(\mu+1) +m-1$.
Then, by \cite[Theorem 4.7]{Neumaier.1979} (cf. \cite[Theorem 8.6.3]{BrVM2022}), the following statements (i)--(iii) hold:
\begin{enumerate}[\normalfont(i)]
	\item If $\mu=m(m-1)$ and $n > f(m,\mu)$, then $\Gamma$ is a Latin square graph $\operatorname{LS}_m(n)$.
	\item If $\mu=m^2$ and $n > f(m,\mu)$, then $\Gamma$ is a Steiner graph $S_m(n)$.
	\item If $\mu\neq m(m-1)$ and $\mu \neq m^2$, then 
	\begin{equation}\label{eq:claw bound}
	n \leqslant f(m,\mu) = \frac{1}{2}m(m-1)(\mu+1) +m-1.
\end{equation} 
\end{enumerate}
We call \eqref{eq:claw bound} the \emph{claw bound}.
As a consequence of the $\mu$-bound and the claw bound, the strongly regular graphs with integral smallest eigenvalue $\leqslant -2$ are characterized as follows.

\begin{lemma}[Sims,~cf.~{\cite[Theorem 8.6.4]{BrVM2022}}]\label{SRG of Sims}
Let $\Gamma$ be a strongly regular graph with parameters $(v,k,\lambda, \mu)$ with integral smallest eigenvalue $-m$, where $m\geqslant 2$. 
Then $\Gamma$ belongs to one of the following {\rm(i)}--{\rm(iv)}:
\begin{enumerate}[\normalfont(i)]
  \item complete multipartite graphs with classes of size $m$;
  \item Latin square graphs $\operatorname{LS}_m(n)$;
  \item Steiner graphs $S_m(n)$;
  \item finitely many further graphs.
\end{enumerate}
\end{lemma}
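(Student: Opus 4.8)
The plan is to reduce the statement to the $\mu$-bound \eqref{eq:mu-bound} and the claw bound \eqref{eq:claw bound} quoted above, splitting the argument according to whether $\Gamma$ is primitive. First I would dispose of the imprimitive case. A strongly regular graph is imprimitive precisely when it is a disjoint union of complete graphs or a complete multipartite graph. The former has smallest eigenvalue $-1$, which is excluded since $m \geqslant 2$; hence an imprimitive $\Gamma$ must be complete multipartite, say with $t$ classes each of size $n_0$. Its spectrum is $\{\,(t-1)n_0,\ 0,\ -n_0\,\}$, so its smallest eigenvalue equals $-n_0$. Setting $-n_0 = -m$ forces $n_0 = m$, placing $\Gamma$ in case (i).

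Next, suppose $\Gamma$ is primitive. Then the $\mu$-bound \eqref{eq:mu-bound} gives $\mu \leqslant m^3(2m-3)$, so $\mu$ is bounded by a function of $m$ alone. I would then invoke Neumaier's trichotomy (items (i)--(iii) preceding the lemma) with $m = -s$ and $n = r - s$. If $\mu = m(m-1)$ and $n > f(m,\mu)$, then $\Gamma$ is a Latin square graph $\operatorname{LS}_m(n)$, yielding case (ii); if $\mu = m^2$ and $n > f(m,\mu)$, then $\Gamma$ is a Steiner graph $S_m(n)$, yielding case (iii).

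In every remaining subcase---namely $\mu \notin \{m(m-1),\, m^2\}$, where the claw bound \eqref{eq:claw bound} applies directly, together with the two sub-subcases $\mu \in \{m(m-1),\, m^2\}$ for which $n \leqslant f(m,\mu)$---we have $n \leqslant f(m,\mu) = \tfrac{1}{2}m(m-1)(\mu+1) + m - 1$. Combining this with the $\mu$-bound, both $\mu$ and $n$ are bounded by functions of $m$. Since $s = -m$ is fixed and $r = n - m$, the eigenvalues $r$ and $s$ are bounded, and the relations \eqref{SRG parameters} then bound $k = \mu - rs$, $\lambda = \mu + r + s$, and $v = (k-r)(k-s)/\mu$ as well. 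Thus, for the fixed value $m$, only finitely many parameter tuples $(v,k,\lambda,\mu)$ can occur; as $v$ is bounded, only finitely many graphs arise, and these constitute case (iv).

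The argument is essentially an assembly of the cited $\mu$-bound and claw bound, so the only genuine care needed lies in the imprimitive case (confirming that the common class size equals $m$) and in merging the two sub-subcases where $\mu \in \{m(m-1),\, m^2\}$ but $n$ fails to exceed $f(m,\mu)$ into the finite family of case (iv). The main conceptual difficulty---establishing the $\mu$-bound and the claw bound themselves---has been absorbed into Neumaier's theorems, which we treat as given.
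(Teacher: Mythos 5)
Your proposal is correct and takes essentially the same route as the paper: the paper presents this lemma as Sims' theorem, a known consequence of Neumaier's $\mu$-bound \eqref{eq:mu-bound} and claw bound \eqref{eq:claw bound}, which is exactly how you obtain cases (ii)--(iv), while your separate treatment of the imprimitive case (forcing the class size to be $m$) correctly supplies case (i). Your final step, bounding $n$, $\mu$, and hence $v$ by functions of $m$ to conclude finiteness, mirrors the computation the paper itself carries out immediately after the lemma to derive Corollary \ref{bound on order of SRG}.
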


\medskip
We give a comment on Lemma \ref{SRG of Sims}.
Assume that $\Gamma$ is none of a complete multipartite graph, a Latin square graph, or a Steiner graph.

By Lemma \ref{SRG of Sims} and the claw bound \eqref{eq:claw bound}, $\Gamma$ satisfies $n \leqslant \frac{1}{2}m(m-1)(\mu+1) +m-1$.
Since $n=r+m$, it follows 
\begin{equation}\label{eq: bound r}
	r\leqslant \frac{1}{2} m(m-1)(\mu+1)-1.
\end{equation}
Note that $n \ne m$ since $\Gamma$ is not a complete multipartite graph.
This implies $r \ne 0$, that is, $r \geqslant 1$.
From the first equation in \eqref{SRG parameters}, we have $v = (k-r)(k+m)/\mu$. 
Substitute $k$ with $\mu + rm$ and simplify the result to obtain
\begin{equation}\label{eq: formula v;r,m}
	v = \mu  + m - r + 2rm + \frac{rm(m-1)(1+r)}{\mu}. 
\end{equation}
Applying inequalities \eqref{eq:mu-bound} and \eqref{eq: bound r} to the right-hand side of \eqref{eq: formula v;r,m} and expressing the result in terms of $m$, we obtain
\begin{equation}\label{eq:bound v;m}
	 v \leqslant m^3(2m-3) + m + \left(2m-1+m^2(m-1)^2\right)\left(\frac{m(m-1)}{2}\left(m^3(2m-3)+1\right)-1\right).
\end{equation}
Let $\varphi(m)$ denote the right-hand side of \eqref{eq:bound v;m}. 
Simplify the expression for $\varphi(m)$ to obtain
\begin{equation}\label{eq:varphi(m)}
	\varphi(m) =  m^{10} - \frac{9}{2}m^9 +\frac{15}{2}m^8 - \frac{7}{2}m^7 - 4m^6 +4m^5 + m^4 - \frac{1}{2}m^3 - \frac{5}{2}m^2-\frac{1}{2}m + 1.
\end{equation}
Note that $\varphi(m) < m^{10}$ for all $m\geqslant 2$.
By these comments, we restate Lemma \ref{SRG of Sims} as follows:
\begin{corollary}\label{bound on order of SRG}
Let $\Gamma$ be a strongly regular graph with parameters $(v,k,\lambda, \mu)$ with integral smallest eigenvalue $-m$, where $m\geqslant 2$. 
Then one of the following holds:
\begin{enumerate}[\normalfont(i)]
  \item $\Gamma$ is a complete multipartite graph with class of size $m$,
  \item $\Gamma$ is a Latin square graph $\operatorname{LS}_m(n)$,
  \item $\Gamma$ is a Steiner graph $S_m(n)$,
  \item The number of vertices of $\Gamma$ is bounded by a function in $m$, i.e., $v \leqslant \varphi(m)$, where $\varphi(m)$ is from \eqref{eq:varphi(m)}.
\end{enumerate}
\end{corollary}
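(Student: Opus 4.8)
The plan is to read Corollary~\ref{bound on order of SRG} as a purely quantitative refinement of Lemma~\ref{SRG of Sims}: its cases (i)--(iii) are inherited verbatim from Sims' theorem, so the whole task is to show that every $\Gamma$ accounted for by the ``finitely many further graphs'' of Lemma~\ref{SRG of Sims}(iv) in fact satisfies $v\leqslant\varphi(m)$. I would therefore assume throughout that $\Gamma$ is none of a complete multipartite graph with classes of size $m$, a Latin square graph $\operatorname{LS}_m(n)$, or a Steiner graph $S_m(n)$, and seek to bound $v$ by a function of $m$ alone.

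The first step is to establish the claw bound $n\leqslant f(m,\mu)$ under these exclusions, via a short case analysis on $\mu$ using Neumaier's trichotomy (the statements (i)--(iii) preceding Lemma~\ref{SRG of Sims}): if $\mu=m(m-1)$ then failing to be a Latin square graph forces $n\leqslant f(m,\mu)$ by (i); if $\mu=m^2$ then failing to be a Steiner graph forces it by (ii); and otherwise (iii) gives it directly. Thus \eqref{eq:claw bound} holds in every remaining case. Substituting $n=r+m$ converts this into the bound \eqref{eq: bound r} on $r$. Moreover, since $\Gamma$ is not complete multipartite it is primitive (so that the $\mu$-bound \eqref{eq:mu-bound} and the inequality $\mu\geqslant 1$ are available) and $n\neq m$, whence $r\geqslant 1$.

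It remains to turn these into a bound on $v$. Starting from $v=(k-r)(k-s)/\mu$ in \eqref{SRG parameters}, substituting $s=-m$ and $k=\mu+rm$ and expanding, the $1/\mu$ terms combine to give the closed form \eqref{eq: formula v;r,m}. The one delicate point---and the place I expect trouble---is the final substitution of the two bounds \eqref{eq:mu-bound} and \eqref{eq: bound r}, since $\mu$ sits both in a denominator and inside the bound on $r$, so maxima cannot simply be inserted. I would defuse this by writing $v=\mu+m+r\bigl[(2m-1)+m(m-1)(1+r)/\mu\bigr]$ and first bounding the bracket: \eqref{eq: bound r} gives $1+r\leqslant\tfrac12 m(m-1)(\mu+1)\leqslant m(m-1)\mu$ (the last step being $\mu+1\leqslant 2\mu$), whence $m(m-1)(1+r)/\mu\leqslant m^2(m-1)^2$ and the bracket is at most $(2m-1)+m^2(m-1)^2$. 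Because $r\geqslant 1>0$, the expression is now monotone in both $r$ and $\mu$, so inserting $\mu\leqslant m^3(2m-3)$ together with $r\leqslant\tfrac12 m(m-1)(m^3(2m-3)+1)-1$ yields precisely \eqref{eq:bound v;m}. Expanding and collecting powers of $m$ gives the degree-ten polynomial $\varphi(m)$ of \eqref{eq:varphi(m)}; this final algebra is routine but error-prone, so I would verify it against the stated sanity check $\varphi(m)<m^{10}$.
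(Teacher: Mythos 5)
Your proposal is correct and follows essentially the same route as the paper: exclude cases (i)--(iii), invoke Neumaier's trichotomy to get the claw bound \eqref{eq:claw bound}, convert it to the bound \eqref{eq: bound r} on $r$, derive \eqref{eq: formula v;r,m}, and substitute the $\mu$-bound and $r$-bound to reach \eqref{eq:bound v;m} and hence $\varphi(m)$. The ``delicate point'' you isolate (handling $\mu$ in the denominator via $1+r\leqslant\tfrac12 m(m-1)(\mu+1)$ and $\mu+1\leqslant 2\mu$, so that $m(m-1)(1+r)/\mu\leqslant m^2(m-1)^2$) is precisely the computation hidden in the paper's phrase ``applying inequalities \eqref{eq:mu-bound} and \eqref{eq: bound r} to the right-hand side,'' so your write-up just makes that step explicit.
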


\section{Distance-regular graphs that are locally strongly regular}\label{sec:DRG locally SRG}
In this section, we discuss distance-regular graphs whose local graphs are strongly regular.
For such graphs, we give a bound on their intersection number $c_2$.
We also show that if a distance-regular graph is locally a conference graph, then it is a Taylor graph.
We begin by recalling some known results about distance-regular graphs that will be used, along with references for further discussion.

\begin{lemma}[{cf.~\cite[Theorem 4.4.3]{BCN}}]\label{lem:local min eig}
Let $\Gamma$ be a distance-regular graph of diameter $D\geqslant 3$ with eigenvalues $k=\theta_0 > \theta_1 >\cdots > \theta_D$ and intersection number $b_1$.
Let $b = b_1/(\theta_1+1)$.
Then, $b >0$.
Moreover, for each vertex $x$ in $\Gamma$, its local graph has the smallest eigenvalue $\geqslant -1-b$.
\end{lemma}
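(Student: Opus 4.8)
The plan is to reduce the statement to an upper estimate on how negative an eigenvalue of the local graph $\Delta := \Gamma(x)$ can be, and then to compare the local and global spectra by lifting a local eigenvector to $V(\Gamma)$. Since $\Gamma$ is distance-regular, $\Delta$ is regular of valency $a_1$; if $a_1=0$ the local graph has no edges and its smallest eigenvalue is $0>-1-b$, so I may assume $a_1\geqslant 1$. For the easy claim $b>0$, note first that $D\geqslant 3$ forces $b_1\geqslant 1>0$. Next, because $D\geqslant 2$ there exist vertices $x',z'$ with $d(x',z')=2$ and a common neighbour $y'$, so $\{x',y',z'\}$ induces a path $P_3$ on three vertices; Cauchy interlacing for this induced subgraph gives $\theta_1\geqslant\lambda_2(P_3)=0$, hence $\theta_1+1\geqslant 1>0$ and $b=b_1/(\theta_1+1)>0$.

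For the main estimate, let $\eta$ be the smallest eigenvalue of $\Delta$ with eigenvector $u$ on $\Gamma(x)$; since $\eta$ is the smallest eigenvalue of the regular graph $\Delta$, $u$ is orthogonal to the all-ones vector $\mathbf 1$. Lift $u$ to $\widehat u\in\mathbb R^{V(\Gamma)}$ by $\widehat u=u$ on $\Gamma(x)$ and $\widehat u=0$ elsewhere. Then $\widehat u\perp\mathbf 1$, so $\widehat u$ lies in the span of the $\theta_i$-eigenspaces with $i\geqslant 1$, and a direct computation with the adjacency matrix $A$ of $\Gamma$ gives $A\widehat u=\eta\,\widehat u+s$, where $s$ is supported on $\Gamma_2(x)$ with $s(z)=\sum_{w\in\Gamma(x)\cap\Gamma(z)}u(w)$; in particular $\widehat u^{\top}A\widehat u=\eta\|u\|^2$, $\widehat u\perp s$, and $s$ again lies in the span of the $\theta_i$-eigenspaces with $i\geqslant 1$. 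The key computation is $\|s\|^2$. Writing $N$ for the $\Gamma_2(x)\times\Gamma(x)$ adjacency matrix, one has $s=Nu$, and counting common neighbours of pairs in $\Gamma(x)$ according to their distance to $x$ yields the identity
\[
N^{\top}N=(k-c_2)I+(a_1-c_2)A_\Delta+(c_2-1)J-A_\Delta^2
\]
on $\mathbb R^{\Gamma(x)}$, where $A_\Delta$ is the adjacency matrix of $\Delta$ and $J$ the all-ones matrix. Applying this to $u$ (with $A_\Delta u=\eta u$ and $Ju=0$) gives $\|s\|^2=\nu\|u\|^2$, where
\[
\nu=(k-c_2)+(a_1-c_2)\eta-\eta^2=-(\eta+1)^2+(a_1-c_2+2)(\eta+1)+b_1\geqslant 0 .
\]

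To turn these data into the bound I would run an interlacing argument on the subspace $U$ spanned by the $\theta_i$-eigenspaces with $i\geqslant 1$, which contains both $\widehat u$ and $s$. On $U$ one has $\theta_D I\preceq A\preceq\theta_1 I$, and the quadratic-form relations above immediately give $\eta\leqslant\theta_1$ and, via the operator $(\theta_1 I-A)(A-\theta_D I)$, a \emph{two-sided} estimate of the form $\nu\leqslant -(\eta-\theta_1)(\eta-\theta_D)$. The content of the lemma, however, is the sharp \emph{one-sided} bound $\eta\geqslant -1-b_1/(\theta_1+1)$, in which $\theta_1$ appears alone.

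This refinement is the main obstacle. Any polynomial in $A$ that is sign-definite on all of $U$ must have roots straddling $[\theta_D,\theta_1]$, so a pure second-moment/interlacing argument with $\widehat u$ inevitably produces both extreme eigenvalues symmetrically; isolating $\theta_1$ forces one to control the distance-two term $s^{\top}As$, i.e.\ how the mass of $\widehat u$ redistributes within $\Gamma_2(x)$, which cannot be read off from the first two moments alone. This is precisely the structural heart of \cite[Theorem 4.4.3]{BCN} (Terwilliger's local-eigenvalue analysis), and it is the step I would invoke to conclude the sharp inequality $(\eta+1)(\theta_1+1)\geqslant -b_1$. Since $\theta_1+1>0$, this rearranges to $\eta\geqslant -1-b_1/(\theta_1+1)=-1-b$; as $\eta$ is the smallest eigenvalue of $\Delta$ and $x\in V(\Gamma)$ was arbitrary, every local graph of $\Gamma$ has smallest eigenvalue at least $-1-b$. (As a check, the bound is attained: for the icosahedron $\Delta\cong C_5$ has smallest eigenvalue $(-1-\sqrt5)/2$, which equals $-1-b$.)
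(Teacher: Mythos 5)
The first thing to say is that the paper itself offers no proof of this lemma: it is stated as a quotation of \cite[Theorem~4.4.3]{BCN} (Terwilliger's bound on local eigenvalues). Since the decisive step of your argument is precisely to ``invoke'' that same theorem, your proposal is, in substance, the same as the paper's treatment --- a citation --- and the lemma is essentially a restatement of the result you invoke, so as an independent proof it is circular. What you do prove on your own is correct but partial: the claim $b>0$ (via $b_1\geqslant 1$ and interlacing with an induced $P_3$, giving $\theta_1\geqslant 0$), the identity $A\widehat u=\eta\widehat u+s$, the formula $N^{\top}N=(k-c_2)I+(a_1-c_2)A_\Delta+(c_2-1)J-A_\Delta^2$ and hence $\|s\|^2=\nu\|u\|^2$, and the two-sided estimate $\nu\leqslant-(\eta-\theta_1)(\eta-\theta_D)$ are all right.

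The genuine flaw is your stated reason for stopping: the obstruction you describe does not exist. You claim that isolating $\theta_1$ requires controlling how the mass of $\widehat u$ redistributes inside $\Gamma_2(x)$, ``which cannot be read off from the first two moments alone.'' But distance-regularity lets you evaluate $\widehat u^{\top}p(A)\widehat u$ for \emph{every} polynomial $p$, not just quadratics: $p(A)$ lies in the Bose--Mesner algebra, so $p(A)=\sum_j\pi_jA_j$ with scalar coefficients, and since the support of $\widehat u$ has pairwise distances $\leqslant 2$,
\[
\widehat u^{\top}A_0\widehat u=\|u\|^2,\qquad
\widehat u^{\top}A_1\widehat u=\eta\|u\|^2,\qquad
\widehat u^{\top}A_2\widehat u=u^{\top}(J-I-A_\Delta)u=-(1+\eta)\|u\|^2,\qquad
\widehat u^{\top}A_j\widehat u=0\ \ (j\geqslant 3).
\]
(Your sub-claim that any sign-definite polynomial must have roots straddling $[\theta_D,\theta_1]$ is also false; consider $\prod_{i\neq 1}(x-\theta_i)^2$.) In particular take $p(A)=E_1$, the primitive idempotent of $\theta_1$, written as $E_1=\frac{m_1}{|V(\Gamma)|}\sum_j u_j(\theta_1)A_j$ with $u_0=1$, $u_1(\theta_1)=\theta_1/k$, $u_2(\theta_1)=(\theta_1^2-a_1\theta_1-k)/(kb_1)$. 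Then $0\leqslant\widehat u^{\top}E_1\widehat u$, and multiplying by the positive constant $|V(\Gamma)|\,kb_1/(m_1\|u\|^2)$ gives
\[
0\;\leqslant\; kb_1+b_1\theta_1\eta-(\theta_1^2-a_1\theta_1-k)(1+\eta)\;=\;(k-\theta_1)\bigl((\eta+1)(\theta_1+1)+b_1\bigr),
\]
where the factorization is an identity using $k=a_1+b_1+1$. Since $\theta_1<k$ and $\theta_1+1>0$, this yields $\eta\geqslant-1-b_1/(\theta_1+1)=-1-b$, which is the lemma, with no appeal to the cited theorem; the same computation with $E_D$ gives the companion bound $\eta\leqslant-1-b_1/(\theta_D+1)$, which is why the two extreme eigenvalues do \emph{not} enter symmetrically. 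So the missing idea is simply to replace powers of $A$ by the primitive idempotents --- that idempotent computation is the heart of the proof of the theorem you cite, and it closes your argument using exactly the data you already computed.
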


Let $\Gamma$ be a distance-regular graph with diameter at least two.
Recall the definition of the $\mu$-graph of $\Gamma$.
Observe that each $\mu$-graph has $c_2$ vertices and that all are isomorphic. 
For such $\Gamma$, the $\mu$-graph is also called the \emph{$c_2$-graph} of $\Gamma$.
If each $c_2$-graph is a regular graph with valency $\kappa$, then we say that $\Gamma$ is \emph{$c_2$-graph-regular} with parameter $\kappa$.
The graph $\Gamma$ is called a \emph{Terwilliger graph} when it is $c_2$-graph-regular and every $c_2$-graph of $\Gamma$ is complete.

\begin{lemma}[{cf.~\cite[Theorem 3.1]{JK2000-2}}]\label{mu-regular}
Let $\Gamma$ be a distance-regular graph which is locally strongly regular with parameters $(v',k',\lambda', \mu')$. 
Then $\Gamma$ is $c_2$-graph-regular with parameter $\mu'$.
Moreover, $c_2 \geqslant \mu'+1$, with equality if and only if $\Gamma$ is a Terwilliger graph.
\end{lemma}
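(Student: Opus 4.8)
The plan is to compute the valency of an arbitrary vertex in a $c_2$-graph by transferring the count into a local graph, where the strong regularity hypothesis applies directly. Fix vertices $x,y \in V(\Gamma)$ with $d(x,y)=2$, and let $z \in \Gamma(x)\cap\Gamma(y)$ be an arbitrary vertex of the corresponding $c_2$-graph. First I would observe that, since $z$ is adjacent to both $x$ and $y$, both $x$ and $y$ are vertices of the local graph $\Gamma(z)$; moreover $x$ and $y$ are non-adjacent in $\Gamma$ (as $d(x,y)=2$), and hence non-adjacent as vertices of the induced subgraph $\Gamma(z)$.

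The key step is an identification of neighbor sets. A vertex $w$ is a neighbor of $z$ inside the $c_2$-graph $\Gamma(x)\cap\Gamma(y)$ precisely when $w \sim z$, $w\sim x$, and $w\sim y$; equivalently, $w\in\Gamma(z)$ is a common neighbor of the pair $x,y$ within $\Gamma(z)$. Since $\Gamma(z)$ is strongly regular with parameters $(v',k',\lambda',\mu')$ and $x,y$ form a non-adjacent pair in it, the number of such common neighbors is exactly $\mu'$. Therefore the valency of $z$ in the $c_2$-graph equals $\mu'$. As $z$ was arbitrary, and the argument does not depend on the choice of $x,y$, every $c_2$-graph is regular with valency $\mu'$, which is precisely the assertion that $\Gamma$ is $c_2$-graph-regular with parameter $\mu'$.

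For the inequality I would use only the elementary combinatorics of a regular graph. By the above, each $c_2$-graph has $c_2$ vertices and is regular of valency $\mu'$; a single vertex already has $\mu'$ distinct neighbors other than itself, so $c_2 \geqslant \mu'+1$. Equality forces every vertex to be adjacent to all of the remaining ones, i.e.\ the $c_2$-graph is the complete graph $K_{\mu'+1}$; conversely a complete $c_2$-graph has exactly $\mu'+1$ vertices. Combined with the $c_2$-graph-regularity already established, $c_2=\mu'+1$ holds if and only if every $c_2$-graph is complete, which is exactly the defining condition for $\Gamma$ to be a Terwilliger graph.

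I do not anticipate a genuine obstacle; the only conceptual point is to recognize that the regularity of a $c_2$-graph should be read off from the local graph at one of its \emph{own} vertices $z$, rather than from the defining pair $(x,y)$. The remaining care is purely in the bookkeeping: verifying that $x,y$ are truly a non-adjacent pair in $\Gamma(z)$ so that the parameter $\mu'$ (rather than $\lambda'$) is the correct count, and noting that adjacency within the induced local graph coincides with adjacency in $\Gamma$.
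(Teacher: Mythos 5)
Your proof is correct: the paper itself gives no argument for this lemma (it is quoted as cf.\ Theorem 3.1 of \cite{JK2000-2}), and your key step --- identifying the neighbors of a vertex $z$ of the $c_2$-graph with the common neighbors of the non-adjacent pair $x,y$ inside the local graph $\Gamma(z)$, which number exactly $\mu'$ by strong regularity --- is precisely the standard argument behind that cited result. Your equality analysis (a $\mu'$-regular graph on $\mu'+1$ vertices is complete, and conversely) likewise matches the paper's definition of a Terwilliger graph, so nothing is missing.
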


A \emph{clique} in $\Gamma$ is a subset of $V(\Gamma)$ such that every pair of distinct vertices is adjacent. A clique of size $p$ is referred to as a complete graph $K_p$.
A \emph{coclique} of $\Gamma$ is a subset of $V(\Gamma)$ such that no two vertices are adjacent. 
A \emph{complete bipartite graph} $K_{p,q}$ is a graph whose vertex set can be partitioned into two cocliques, say a $p$-set $V_1$ and a $q$-set $V_2$, where each vertex in $V_1$ is adjacent to all vertices in $V_2$.
A \emph{complete multipartite graph} $K_{t \times p}$ is a graph whose vertex set can be partitioned into cocliques $\{V_i\}^t_{i=1}$ of size $p$, where each vertex in $V_i$ is adjacent to all vertices in $V_j$ $(1\leq j\ne i \leq t)$.

\begin{lemma}[{cf.~\cite[Lemma 3.7]{TKCP2022}}]\label{bound on s}
Let $\Gamma$ be a distance-regular graph with valency $k$, diameter $D \geqslant 5$ and second largest eigenvalue $\theta_1$. Assume that $\Gamma$ contains an induced subgraph $K_{2,t}$ for some $t \geqslant 2$. Let $b = {b_1}/(\theta_1 +1)$. Then $t \leqslant 4b^2 +1$.
\end{lemma}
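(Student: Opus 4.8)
The plan is to use the spherical representation of $\Gamma$ attached to $\theta_1$ together with the positive semidefiniteness of a small Gram matrix. Let $E_1$ be the primitive idempotent for $\theta_1$ and, for each vertex $x$, let $\hat x$ be the normalized projection of the standard basis vector onto the $\theta_1$-eigenspace. Since $\Gamma$ is distance-regular, $\langle \hat x,\hat y\rangle$ depends only on $d(x,y)$; write $\sigma_i$ for this $i$-th cosine, so $\sigma_0=1$ and $\sigma_1=\theta_1/k$. Denote the two-vertex side of the $K_{2,t}$ by $\{u,w\}$ and the other side by $\{x_1,\dots,x_t\}$. As $u\not\sim w$ while both are adjacent to every $x_i$, we have $d(u,w)=2$; as the $x_i$ are pairwise nonadjacent with common neighbour $u$, we have $d(x_i,x_j)=2$; and $d(u,x_i)=d(w,x_i)=1$. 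Hence the Gram matrix $G$ of $\{x_1,\dots,x_t,u,w\}$ depends only on $\sigma_1,\sigma_2$, and $G\succeq 0$.

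\textbf{Extracting the inequality.} First I would test $G\succeq 0$ against the vector equal to $+1$ on each $x_i$ and $-\alpha$ on each of $u,w$. A short computation gives
\[
t+t(t-1)\sigma_2-4t\sigma_1\alpha+2\alpha^2(1+\sigma_2)\ge 0,
\]
and minimizing over $\alpha$ (optimum $\alpha=t\sigma_1/(1+\sigma_2)$, provided $1+\sigma_2>0$) yields after dividing by $t$
\[
1+(t-1)\sigma_2-\frac{2t\sigma_1^2}{1+\sigma_2}\ge 0,\qquad\text{so}\qquad t\le\frac{1-\sigma_2^2}{2\sigma_1^2-\sigma_2-\sigma_2^2}
\]
whenever the denominator is positive. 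The crucial factor $2$ on $\sigma_1^2$ comes from using \emph{both} of $u,w$; testing with $u$ alone (the star $K_{1,t}$) gives only the weaker $t\le(1-\sigma_2)/(\sigma_1^2-\sigma_2)$.

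\textbf{Making the bound explicit in $b$.} Next I would evaluate the right-hand side. The cosine recurrence at $i=1$, $1+a_1\sigma_1+b_1\sigma_2=\theta_1\sigma_1$, together with $k=1+a_1+b_1$ and $b_1=b(\theta_1+1)$, gives the clean form $\sigma_2=\big((1+b)\theta_1-k\big)/(bk)$. Writing $x:=\sigma_1=\theta_1/k$, one gets $\sigma_2=\big((1+b)x-1\big)/b$ and $1-\sigma_2=(1+b)(1-x)/b$, and the denominator simplifies to a $1/b^2$ multiple of $(x-1)\big[(b^2-2b-1)x-(b-1)\big]$. Thus the whole bound is an explicit rational function $B(x,b)$ of $x$ and $b$. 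Because $\theta_1>0$ (here the hypothesis $D\ge 5$ keeps $\Gamma$ far from complete multipartite, forcing $\theta_1>0$) and $b\theta_1=b_1\theta_1/(\theta_1+1)<b_1<k$, we obtain the key constraint $0<x<1/b$; on this interval the factored denominator is positive, so the bound is valid, and a direct estimate gives $B(x,b)\le b^2+1\le 4b^2+1$, the value $b^2+1$ being approached only as $x\to 1/b$.

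\textbf{Main obstacle.} The routine part is the PSD algebra above; the real content is controlling the range of $x=\theta_1/k$, hence the sign and magnitude of the denominator. For $b\ge 1$ the elementary bound $x<1/b$ does everything, giving even $t\le b^2+1$. For small $b$, however, $B(x,b)$ blows up as $x\to 1^-$, so one genuinely needs $D\ge 5$ both to guarantee $\theta_1>0$ and $1+\sigma_2>0$ and to keep $\theta_1/k$ out of this degenerate regime. Making the dependence on $D\ge 5$ precise — rather than the PSD computation itself — is the step I expect to require the most care.
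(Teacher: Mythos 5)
First, a note on the comparison itself: the paper does not prove this lemma at all --- it is quoted verbatim from \cite[Lemma 3.7]{TKCP2022} --- so your proposal can only be judged on its own merits. On those merits, your Gram-matrix computation is correct, and I verified each step: the quadratic form $t+t(t-1)\sigma_2-4t\alpha\sigma_1+2\alpha^2(1+\sigma_2)\ge 0$, the optimizer $\alpha=t\sigma_1/(1+\sigma_2)$, the identity $\sigma_2=\bigl((1+b)x-1\bigr)/b$ (with $x=\theta_1/k$) obtained from the cosine recurrence $1+a_1\sigma_1+b_1\sigma_2=\theta_1\sigma_1$, the factorization of $2\sigma_1^2-\sigma_2-\sigma_2^2$ as $(x-1)\bigl[(b^2-2b-1)x-(b-1)\bigr]/b^2$, the positivity of this denominator and of $1+\sigma_2$ on $0<x<1/b$ when $b\geqslant 1$, and the monotonicity of the resulting bound
$B(x,b)=(1+b)\bigl[(b-1)+(1+b)x\bigr]/\bigl[(b-1)-(b^2-2b-1)x\bigr]$
in $x$ (its derivative has sign $b(b-1)^2\geqslant 0$), with $B(1/b,b)=b^2+1$. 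So for $b\geqslant 1$ your argument is complete and even yields the stronger conclusion $t\leqslant b^2+1\leqslant 4b^2+1$.

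The genuine gap is exactly the point you flag as your ``main obstacle'' and leave unresolved: the regime $b<1$, where you suggest a delicate use of $D\geqslant 5$ is needed. No such argument is needed, because that regime is vacuous under the hypotheses, and the reason is already used elsewhere in this very paper (proof of Theorem \ref{thm:main}): since $t\geqslant 2$, the induced $K_{2,t}$ contains a quadrangle $u\sim x_1\sim w\sim x_2\sim u$ with $d(u,w)=d(x_1,x_2)=2$, and Terwilliger's quadrangle bound \cite{Terwilliger1985} (cf.\ \cite[p.~170]{BCN}) then gives $\theta_1\leqslant b_1-1$, i.e.\ $b=b_1/(\theta_1+1)\geqslant 1$. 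Inserting this one line at the start turns your proposal into a full proof. It also clarifies the role of the diameter hypothesis: all you actually use is $\theta_1>0$, which holds for $D\geqslant 3$ (a connected graph whose adjacency matrix has exactly one positive eigenvalue is complete multipartite, hence has diameter at most $2$); the assumption $D\geqslant 5$ in the statement is inherited from the context of \cite{TKCP2022}, not forced by your argument.
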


We now establish a bound on the intersection number $c_2$ for distance-regular graphs that are locally strongly regular. 

\begin{proposition}
Let $\Gamma$ be a distance-regular graph with diameter $D\geqslant 5$, valency $k$, intersection numbers $b_1$ and $c_2$, and second largest eigenvalue $\theta_1$. 
Assume $\Gamma$ is locally strongly regular with parameters $(v',k',\lambda', \mu')$. 
Let $b = {b_1}/(\theta_1 +1)$. 
Then 
\begin{equation}\label{c2:bound}
	c_2 \leqslant (4b^2 +1)(\mu'+1).
\end{equation}
\end{proposition}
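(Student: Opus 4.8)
The plan is to bound the order $c_2$ of a $c_2$-graph by combining its regularity (Lemma \ref{mu-regular}) with the forbidden-$K_{2,t}$ bound (Lemma \ref{bound on s}), using the independence number of the $c_2$-graph as the bridge between them. Fix vertices $x,y \in V(\Gamma)$ with $d(x,y)=2$ and let $\Delta$ be the $c_2$-graph induced on $\Gamma(x)\cap\Gamma(y)$; it has $c_2$ vertices and, by Lemma \ref{mu-regular}, is regular with valency $\mu'$. Let $\alpha$ denote the independence number of $\Delta$ and fix a maximum coclique $I$ of $\Delta$.

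The key observation is that $I$ produces an induced complete bipartite subgraph of $\Gamma$. Every vertex of $I$ lies in $\Gamma(x)\cap\Gamma(y)$, hence is adjacent to both $x$ and $y$; moreover $x\not\sim y$ because $d(x,y)=2$, and the vertices of $I$ are pairwise non-adjacent. Therefore the subgraph of $\Gamma$ induced on $\{x,y\}\cup I$ is exactly $K_{2,\alpha}$. If $\alpha\geqslant 2$, Lemma \ref{bound on s} then yields $\alpha \leqslant 4b^2+1$; and if $\alpha=1$ this inequality holds trivially, since $b>0$ by Lemma \ref{lem:local min eig}. (The case $\alpha=1$ is precisely the Terwilliger case, where $\Delta$ is complete and $c_2=\mu'+1$.)

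It remains to relate $c_2$ to $\alpha$ and $\mu'$. Since $I$ is a maximum, hence maximal, coclique of the $\mu'$-regular graph $\Delta$, every vertex outside $I$ has at least one neighbor in $I$; counting the edges between $I$ and its complement from both sides gives
\begin{equation*}
	c_2-\alpha \;\leqslant\; \alpha\mu',
\end{equation*}
so that $c_2\leqslant \alpha(\mu'+1)$. Substituting $\alpha\leqslant 4b^2+1$ yields the claimed bound \eqref{c2:bound}. The argument is short once assembled; the only points that require care are verifying that $\{x,y\}\cup I$ is genuinely \emph{induced} (so that Lemma \ref{bound on s} legitimately applies) and treating the degenerate value $\alpha=1$ separately, and I expect no serious obstacle beyond these bookkeeping matters.
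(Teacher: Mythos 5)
Your proof is correct and follows essentially the same route as the paper's: both use Lemma \ref{mu-regular} for $\mu'$-regularity of the $c_2$-graph, realize a coclique of the $c_2$-graph together with $\{x,y\}$ as an induced $K_{2,t}$ so that Lemma \ref{bound on s} bounds the independence number by $4b^2+1$, and then bound $c_2$ by $\alpha(\mu'+1)$. Your write-up merely makes explicit the edge-counting (maximal coclique domination) step that the paper compresses into one sentence, and your case $\alpha=1$ corresponds to the paper's Terwilliger case.
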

\begin{proof}
We assume $c_2\geqslant 2$; otherwise, it is trivial.
Suppose that $\Gamma$ is a Terwilliger graph.
Then, by Lemma \ref{mu-regular}, we have $c_2=\mu'+1 < (4b^2+1)(\mu'+1)$.
Now, assume that $\Gamma$ is not a Terwilliger graph.
Since $c_2\geqslant 2$, $\Gamma$ contains an induced subgraph $K_{2,t}$ for some $t\geqslant 2$.
Applying Lemma \ref{bound on s} to $\Gamma$, we obtain the bound $t \leqslant 4b^2+1$.
Next, as $\Gamma$ is locally strongly regular, by Lemma \ref{mu-regular}, $\Gamma$ is $c_2$-graph-regular with parameter $\mu'$.
Thus, we find that the number of vertices of the $c_2$-graph is at most $t(\mu'+1)$.
By these comments, it follows that $c_2 \leqslant t(\mu'+1) \leqslant (4b^2+1)(\mu'+1)$.
\end{proof}

Recall that a conference graph is a strongly regular graph with parameters $(4\mu +1, 2\mu, \mu-1, \mu)$, where $\mu > 0$. 
We consider distance-regular graphs that are locally a conference graph. 
First, we recall the classification of distance-regular graphs that satisfy $a_1 \geqslant k/2-1$, as shown by Koolen and Park \cite{KP2012}.

\begin{lemma}[{\cite[Theorem 16]{KP2012}}]\label{a_1 bound}
Let $\Gamma$ be a distance-regular graph with diameter $D \geqslant 3$ and valency $k$. If $a_1 \geqslant \frac{1}{2}k -1$, then one of the following holds:
\begin{enumerate}[\normalfont(i)]
  \item $\Gamma$ is a polygon,
  \item $\Gamma$ is the line graph of a Moore graph,
  \item $\Gamma$ is the flag graph of a regular generalized $D$-gon of order $(s,s)$ for some $s$,
  \item $\Gamma$ is a Taylor graph,
  \item $\Gamma$ is the Johnson graph $J(7,3)$,
  \item $\Gamma$ is the halved $7$-cube.
\end{enumerate}
\end{lemma}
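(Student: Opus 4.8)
The plan is to reduce the whole problem to the structure of a single local graph $\Delta = \Gamma(x)$ and to split the analysis according to the value of $c_2$. Since $k = a_1 + b_1 + 1$, the hypothesis $a_1 \geqslant \tfrac{1}{2}k - 1$ is equivalent to $b_1 \leqslant a_1 + 1$; equivalently, $\Delta$ is an $a_1$-regular graph on $k$ vertices whose complement has valency $k - 1 - a_1 \leqslant \tfrac{1}{2}k$. It is worth noting up front that each of the infinite families (i)--(iii) will correspond to the boundary case $a_1 = \tfrac{1}{2}k - 1$, which serves as a running sanity check, while (iv)--(vi) should emerge from the strict inequality together with $c_2 \geqslant 2$.

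First I would dispose of the case $c_2 = 1$, which is essentially combinatorial. A short counting argument shows that $\Delta$ is a disjoint union of cliques: if $y,z,w \in \Gamma(x)$ satisfy $y \sim z$ and $y \sim w$ but $z \not\sim w$, then $x$ and $y$ are two common neighbors of the distance-$2$ pair $z,w$, contradicting $c_2 = 1$; hence adjacency inside $\Delta$ is transitive. As $\Delta$ is $a_1$-regular, every component is a clique of size $a_1 + 1$, so each vertex of $\Gamma$ lies in exactly $k/(a_1+1)$ maximal cliques of $\Gamma$. The density hypothesis forces $k/(a_1+1) \leqslant 2$, and discarding the complete graph (which has $D = 1$) leaves the case where each vertex lies in exactly two maximal cliques and $a_1 = \tfrac{1}{2}k - 1$. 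A connected graph that is locally a disjoint union of two cliques admits a Krausz partition and is therefore the line graph of a regular graph $H$. Intersecting the known classification of distance-regular line graphs with the constraint $c_2 = 1$ then singles out precisely the polygons, the line graphs of Moore graphs, and the flag graphs of generalized $D$-gons of order $(s,s)$, which are conclusions (i), (ii), (iii); here the Feit--Higman theorem restricts which generalized polygons can actually occur.

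For $c_2 \geqslant 2$ the local graph is no longer a union of cliques, and the strategy is to prove that $\Delta$ is in fact strongly regular and then to match it against the global graph. Lemma~\ref{lem:local min eig} bounds the smallest eigenvalue of $\Delta$ below by $-1-b$, and combined with $a_1$-regularity and $a_1 \geqslant \tfrac{1}{2}k - 1$, a Hoffman ratio-bound argument caps the size of any coclique of $\Delta$ by a constant depending only on $b$. This bounded-claw behaviour, together with $D \geqslant 3$, should force $\Delta$ to be strongly regular. One may then feed $\Delta$ into the Sims classification recalled in Lemma~\ref{SRG of Sims} and Corollary~\ref{bound on order of SRG}, while Lemma~\ref{mu-regular} relates the parameters $(v',k',\lambda',\mu')$ of $\Delta$ back to $c_2$. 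Finally, the known characterizations of distance-regular graphs that are locally a prescribed strongly regular graph pin down the Taylor graphs, the Johnson graph $J(7,3)$ (locally a rook's graph), and the halved $7$-cube (locally a triangular graph), giving (iv)--(vi).

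The main obstacle is the $c_2 \geqslant 2$ branch: upgrading the ratio-bound conclusion from ``$\Delta$ has only small cocliques'' to ``$\Delta$ is genuinely strongly regular'', and then recovering the global graph from its local structure, both require care and are where the real work lies. The $c_2 = 1$ branch, by contrast, is clean once the clique decomposition of $\Delta$ is in hand, with the only external input being the classification of distance-regular line graphs. I would also pay particular attention to the boundary $a_1 = \tfrac{1}{2}k - 1$, where the two branches nearly meet and where the infinite families reside, to ensure no configuration is counted twice or overlooked.
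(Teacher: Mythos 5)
The paper itself gives no proof of this lemma: it is quoted verbatim from Koolen and Park \cite[Theorem 16]{KP2012}, so your proposal has to stand on its own as a reconstruction of that external argument. Your $c_2=1$ branch is essentially sound: $c_2=1$ does force every local graph to be a disjoint union of $(a_1+1)$-cliques, the hypothesis $a_1\geqslant \frac{1}{2}k-1$ does cap the number of cliques per vertex at two, a connected graph that is locally two disjoint cliques is a line graph, and the classification of distance-regular line graphs (Mohar--Shawe-Taylor, cf.\ \cite[Theorem 4.2.16]{BCN}) intersected with $D\geqslant 3$ yields exactly (i)--(iii).

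The $c_2\geqslant 2$ branch, however, hinges on a step that is not only unjustified but false. You propose to show that the local graph $\Delta$ is strongly regular and then recover $\Gamma$ from its local structure; the passage from ``$\Delta$ has bounded cocliques'' to ``$\Delta$ is strongly regular'' is asserted with no argument (``should force''), and no argument can exist: the Johnson graph $J(7,3)$, which is conclusion (v) of the very lemma and satisfies $c_2=4$ and $a_1=5=\frac{1}{2}k-1$, is locally the $(3\times 4)$-grid, which is \emph{not} strongly regular (two adjacent vertices have $2$ or $1$ common neighbours according as they lie in a row of size $4$ or a column of size $3$; only square grids are strongly regular). So any proof deriving local strong regularity from your hypotheses would prove a false statement, and the subsequent appeal to Lemma~\ref{SRG of Sims} and to ``known locally-SRG characterizations'' cannot produce case (v). A secondary defect: your Hoffman bound controls cocliques of $\Delta$ only in terms of $b=b_1/(\theta_1+1)$, a graph-dependent quantity that your argument never bounds, so even the ``bounded claw'' claim is not actually established. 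A route that does work is quite different: when $c_2\geqslant 2$, either some $\mu$-graph is not complete, so $\Gamma$ contains a quadrangle and Terwilliger's inequality $c_i-b_i\geqslant c_{i-1}-b_{i-1}+a_1+2$ \cite{Terwilliger1985} telescopes to $D(a_1+2)\leqslant c_D+k\leqslant 2k$, which under $a_1\geqslant\frac{1}{2}k-1$ forces $D=3$ and reduces everything to a direct diameter-three analysis (this is where the Taylor graphs, $J(7,3)$, and the halved $7$-cube arise, all attaining equality in Terwilliger's bound); or every $\mu$-graph is complete, so $\Gamma$ is a Terwilliger graph with $c_2\geqslant 2$, a case settled by known results on such graphs, where under your density hypothesis only the icosahedron (itself a Taylor graph) survives. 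Replacing your local-strong-regularity mechanism by this quadrangle/diameter argument is not a repair of your outline but a different proof.
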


\begin{proposition}\label{conference}
Let $\Gamma$ be a distance-regular graph with diameter $D \geqslant 3$. 
If $\Gamma$ is locally a conference graph, then it is a Taylor graph.
\end{proposition}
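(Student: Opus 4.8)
The plan is to reduce to the Koolen--Park classification in Lemma~\ref{a_1 bound} and then eliminate every case except the Taylor-graph one. First I would record the numerical consequences of the hypothesis. Since each local graph is a conference graph with parameters $(4\mu_c+1,\,2\mu_c,\,\mu_c-1,\,\mu_c)$ for some $\mu_c\geqslant 1$, and since a local graph of $\Gamma$ has exactly $k$ vertices and valency $a_1$, we obtain $k=4\mu_c+1$ and $a_1=2\mu_c=(k-1)/2$. In particular $a_1=(k-1)/2\geqslant \tfrac12 k-1$, so the hypothesis of Lemma~\ref{a_1 bound} is satisfied and $\Gamma$ must be one of the six families (i)--(vi) listed there. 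It then remains only to rule out (i), (ii), (iii), (v), and (vi).

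The key structural observation is that a conference graph is a primitive (hence connected) strongly regular graph on an odd number $4\mu_c+1\geqslant 5$ of vertices. This disposes of the first three cases. In case (i), a polygon of diameter $D\geqslant 3$ has valency $2$, so its local graph has only two vertices and cannot be a conference graph. In cases (ii) and (iii), $\Gamma$ is the line graph of a triangle-free graph: a Moore graph has girth $5$, while the flag graph of a generalized $D$-gon is the line graph of the incidence graph, which is bipartite of girth $2D\geqslant 6$. For the line graph of any triangle-free graph, the local graph at an edge $uv$ is the disjoint union of the clique on the edges through $u$ and the clique on the edges through $v$, and triangle-freeness forces these two cliques to be mutually non-adjacent, so the local graph is disconnected. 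A connected conference graph therefore cannot arise, eliminating (ii) and (iii).

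For the two remaining sporadic cases I would simply identify the local graph and compare parameters. The local graph of the Johnson graph $J(7,3)$ is a strongly regular graph on $3\cdot 4=12$ vertices (the $3\times 4$ rook's graph); since $12$ is even it is not of the form $4\mu_c+1$, hence not a conference graph, ruling out (v). The local graph of the halved $7$-cube is the triangular graph $T(7)=J(7,2)$, which is strongly regular with parameters $(21,10,5,4)$; a conference graph on $21$ vertices would have parameters $(21,10,4,5)$, so $T(7)$ is not a conference graph, ruling out (vi). Only case (iv) survives, which is precisely the conclusion that $\Gamma$ is a Taylor graph.

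Since the argument is essentially a finite case check, there is no deep obstacle once Lemma~\ref{a_1 bound} is invoked; the points requiring the most care are the structural identifications in (ii)--(iii) (in particular recognizing the flag graph as the line graph of a triangle-free incidence graph, so that its local graph is disconnected) and the parameter bookkeeping in (v)--(vi). I would double-check the roles of $\lambda'$ and $\mu'$ in the halved-cube computation, as that is where the elimination is most delicate.
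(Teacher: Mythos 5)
Your proof is correct, and it follows the same skeleton as the paper's: compute $k=4\mu_c+1$, $a_1=2\mu_c$, invoke Lemma~\ref{a_1 bound}, and eliminate all cases except the Taylor graphs. The differences lie in how the individual cases are killed, and your tactics are genuinely different from the paper's in two places. For cases (ii) and (iii) the paper uses parity: the line graph of a Moore graph has $a_1=k'-2$ odd (while a conference local graph forces $a_1=2\mu_c$ even), and the flag graph has valency $2s$ even (while $k=4\mu_c+1$ is odd). You instead observe that both are line graphs of triangle-free graphs, hence locally a disjoint union of two cliques, which can never be a (connected) conference graph; this is a clean structural argument that handles both cases at once. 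For case (vi) the paper reads off the intersection array $\{21,10,3;1,6,15\}$ and appeals to the nonexistence of a conference graph on $21$ vertices and valency $10$ --- a fact it asserts without proof (it follows from the sum-of-two-squares condition, since $21=3\cdot 7$). You instead identify the local graph of the halved $7$-cube as $T(7)$, an $\operatorname{SRG}(21,10,5,4)$, and note that a conference graph on $21$ vertices would have $\lambda=4$, $\mu=5$; this trades the nonexistence fact for the (standard, but also unproved here) identification of the local structure of the halved cube, and is arguably more self-contained. Both of your substitutions are valid; case (i) and case (v) are handled essentially as in the paper.
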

\begin{proof}
Let $\Delta$ denote a local graph of $\Gamma$. 
Then $\Delta$ is a conference graph, and we denote its parameters by $(4\mu +1, 2\mu, \mu-1,\mu)$, where $\mu > 0$.
Thus, $\Gamma$ has valency $k=4\mu+1$ and intersection number $a_1=2\mu$.
We observe that $k$ is odd, $a_1$ is nonzero and even, and $\Gamma$ satisfies $a_1 > \frac{1}{2}k -1$.
By Lemma \ref{a_1 bound}, $\Gamma$ falls into one of the graphs (i)--(vi) listed therein. 
However, we observe that:
\begin{itemize}\setlength\itemsep{0pt}
\item In case (i), a polygon has $k=2$ and $a_1=0$;
\item In case (ii), the line graph of a Moore graph has the intersection array $\{2k'-2,k'-1,k'-2;1,1,4\}$, where $k' \in \{3,7,57\}$.
This implies that $a_1 = k'-2$, which is an odd number;
\item In case (iii), the flag graph of a regular generalized $D$-gon of order $(s,s)$, a generalized $2D$-gon of order $(s,1)$, has $k= 2s$ (an even number);
\item In case (v), the Johnson graph $J(7,3)$ has the intersection array $\{12,6,2;1,4,9\}$, so $k=12$ (even) and $a_1=5$ (odd).
\end{itemize}
From these observations, it follows that none of the graphs (i), (ii), (iii), (v) is locally a conference graph.
Moreover, as the halved $7$-cube has the intersection array $\{21,10,3; 1,6,15\}$ its local graph has $21$ vertices and valency $10$, but such a conference graph does not exist.
Therefore, $\Gamma$ is a Taylor graph.
\end{proof}

We finish this section with a comment.
The line graph of $K_{p,q}$ is called the \emph{$(p\times q)$-grid}.
Note that the square $(p\times p)$-grid, where $p\geqslant 2$, is a strongly regular graph and is isomorphic to the Latin square graph $\operatorname{LS}_2(p)$.

\begin{lemma}[{cf. \cite[Proposition 4.1]{JK2003}}]\label{lem:grid}
Let $\Gamma$ be a distance-regular graph with diameter $D\geqslant 2$.
If $\Gamma$ is locally the $(p\times q)$-grid and $c_2=4$, then $\Gamma$ is the Johnson graph $J(p+q,p)$, or $p=q$ and $\Gamma$ is the folded Johnson graph $\bar{J}(2p,p)$.
\end{lemma}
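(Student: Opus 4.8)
The plan is to read off the basic parameters from the local grid, then prove the key structural fact that every $\mu$-graph is a quadrangle ($2\times 2$ subgrid), and finally reconstruct the global graph from the resulting clique geometry, identifying it with $J(p+q,p)$ or, when $p=q$, with its antipodal quotient $\bar J(2p,p)$.

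\smallskip
\emph{Setup.} Model the $(p\times q)$-grid (the line graph of $K_{p,q}$) on the pairs $(i,j)$ with $1\leqslant i\leqslant p$, $1\leqslant j\leqslant q$, adjacent when they agree in exactly one coordinate. It has $pq$ vertices and valency $p+q-2$, so I would first record that $\Gamma$ has valency $k=pq$ and $a_1=p+q-2$. Since $D\geqslant 2$ and $c_2=4>0$, vertices at distance $2$ exist, so each local grid has non-adjacent pairs, forcing $p,q\geqslant 2$. Its maximal cliques are the $p$ rows (fix $i$; size $q$) and the $q$ columns (fix $j$; size $p$); in $\Gamma$ these extend to maximal cliques of sizes $q+1$ and $p+1$, so that through each vertex pass exactly $p$ ``row-type'' and $q$ ``column-type'' maximal cliques, and each edge of $\Gamma$ lies in exactly one clique of each family. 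This is precisely the two-family clique pattern of the $(k-1)$-subset and $(k+1)$-superset cliques of $J(p+q,p)$, which the final step will exploit.

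\smallskip
\emph{The $\mu$-graph is a quadrangle.} This is the combinatorial heart, and I would prove it directly. Fix $x,z$ with $d(x,z)=2$ and a common neighbour $y$. Inside the local grid $\Gamma(y)$, the vertices $x,z$ are non-adjacent, hence differ in both coordinates, say $x=(\alpha,\beta)$ and $z=(\alpha',\beta')$ with $\alpha\neq\alpha'$ and $\beta\neq\beta'$. A direct count shows the only vertices of $\Gamma(y)$ adjacent to both $x$ and $z$ are the two remaining corners $u_1=(\alpha,\beta')$ and $u_2=(\alpha',\beta)$. Thus $y,u_1,u_2$ are three common neighbours of $x,z$, and since $c_2=4$ there is exactly one more, say $w$, which is not adjacent to $y$, so $d(y,w)=2$. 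In $\Gamma(x)$ one has $y\sim u_1$, $y\sim u_2$, and $u_1\not\sim u_2$. Running the same grid analysis with the common neighbour $u_1$ in place of $y$: among the remaining common neighbours $\{y,u_2,w\}$, exactly two are adjacent to $u_1$ and exactly one is at distance $2$; as $y\sim u_1$ and $u_2\not\sim u_1$, this forces $w\sim u_1$. Symmetrically $w\sim u_2$, while $w\not\sim y$. Hence the $\mu$-graph on $\{y,u_1,u_2,w\}$ is the $4$-cycle $y-u_1-w-u_2-y$, and since an induced $4$-cycle in a grid is necessarily a $2\times 2$ subgrid, every $\mu$-graph is such a quadrangle. (When $p=q$ this also follows from Lemma~\ref{mu-regular}, since the lattice graph $L_2(p)$ is strongly regular with $\mu'=2$, forcing the $c_2$-graph to be $2$-regular on four vertices.)

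\smallskip
\emph{Reconstruction, and the main obstacle.} With the local grids and the quadrangle $\mu$-graphs established, $\Gamma$ carries the incidence geometry of two clique families behaving exactly like those of the Johnson graph: any two vertices at distance $2$ span a unique quadrangle, each vertex lies in $p$ cliques of one family and $q$ of the other, and each edge lies in one clique of each family. I would use this to recover an underlying $(p+q)$-set, so that vertices correspond to $p$-subsets and adjacency to meeting in $p-1$ elements, via the standard local recognition of Johnson graphs, concluding $\Gamma\cong J(p+q,p)$. When $p\neq q$ the two families are distinguished by their sizes $q+1\neq p+1$, so the matching is globally consistent and $J(p+q,p)$ is the only possibility. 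The delicate case, and the main obstacle, is $p=q$: the two families can no longer be told apart by size, and a global interchange is exactly what produces the antipodal quotient, for $J(2p,p)$ is an antipodal double cover of $\bar J(2p,p)$ and both are locally the $(p\times p)$-grid with $c_2=4$. I would finish by showing these are the only two alternatives, distinguishing them by whether $\Gamma$ is this antipodal $2$-quotient (equivalently, by its diameter and order). The quadrangle computation above is what feeds the recognition theorem and isolates $c_2=4$ as the characteristic condition.
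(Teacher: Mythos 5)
Your proposal is, in substance, the proof that the paper is pointing to. Note first that the paper itself does not prove Lemma~\ref{lem:grid}: it quotes it (with a ``cf.'') from \cite[Proposition~4.1]{JK2003}, and that proof in turn rests on the local characterization of Johnson graphs (cf.\ \cite[Section~9.1]{BCN}): a connected graph which is locally a $(p\times q)$-grid and all of whose $\mu$-graphs are quadrangles is $J(p+q,p)$, or $p=q$ and it is the folded Johnson graph $\bar{J}(2p,p)$. Your reduction to that statement is correct and self-contained. In particular, your argument that every $\mu$-graph is a quadrangle is sound: for any common neighbour $v$ of a pair $x,z$ at distance $2$, exactly two of the remaining three common neighbours lie in the grid $\Gamma(v)$ (the two ``corners''), so each vertex of the $\mu$-graph has degree exactly $2$ in it, and a $2$-regular graph on four vertices is a $4$-cycle. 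You are also right to observe that Lemma~\ref{mu-regular} cannot be quoted directly when $p\neq q$, since the $(p\times q)$-grid is strongly regular only for $p=q$; your direct count is what replaces it, and this is a genuine (if small) improvement in self-containedness over simply citing $\mu$-graph regularity.

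The caveat is your final ``reconstruction'' paragraph, which as written is a plan rather than a proof. Recovering an underlying $(p+q)$-set from the two clique families, proving that the row/column labelling is globally consistent when $p\neq q$, and proving that for $p=q$ the only alternative to $J(2p,p)$ is its antipodal quotient, is precisely the content of the recognition theorem above; it is a genuine theorem, not a routine verification, and the $p=q$ dichotomy (global consistency up to a possible ``twist'', which is exactly what produces $\bar{J}(2p,p)$) is its hard part. If you intend that paragraph as an appeal to the known theorem, then name it explicitly (the local characterization of Johnson graphs in \cite[Section~9.1]{BCN}); with that citation your proof is complete and coincides with the route taken in \cite{JK2003}. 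If instead you intend to prove the reconstruction yourself, then that step is a real gap: nothing in your sketch actually constructs the $(p+q)$-set or rules out other quotients or twisted identifications in the $p=q$ case.
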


\section{The $\operatorname{CAB}_i$ property}\label{Sec:CABprop}

In this section, we recall the definition of the $\operatorname{CAB}_i$ property for distance-regular graphs and review related known results.  
We then focus on distance-regular graphs satisfying the $\operatorname{CAB}_2$ property.

\smallskip
Let $\Gamma$ be a distance-regular graph with diameter $D \geqslant 2$. 
For $0 \leqslant i \leqslant D$ and for $x, y \in V(\Gamma)$ with $d(x, y) = i$, we recall the subsets $C_i(x, y)$, $A_i(x, y)$, $B_i(x, y)$ of $V(\Gamma)$, as defined in \eqref{sets:CAB}. 
We also recall the intersection numbers $c_i$, $a_i$, $b_i$ of $\Gamma$ from \eqref{int numbers}.
Assume $a_1 > 0$.
Let $\Delta(y)$ denote the local graph of $\Gamma$ at $y$.
Note that $\Delta(y)$ has $k = b_0$ vertices and is regular with valency $a_1$.
We observe that the set $\{C_i(x,y), A_i(x,y), B_i(x,y)\}$ partitions the vertex set of $\Delta(y)$.
We denote this partition by $\operatorname{CAB}_i(x, y)$ and call it the \emph{$\operatorname{CAB}_i(x, y)$ partition} of $\Delta(y)$. 
Observe that the $\operatorname{CAB}_0(x,y)$ partition is $\{\Gamma(x)\}$, and the $\operatorname{CAB}_D(x,y)$ partition is $\{C_D(x,y), A_D(x,y)\}$ if $a_D\ne 0$ and $\{C_D(x,y)\}$ if $a_D=0$. 
For $1 \leqslant j \leqslant D$, we say that $\Gamma$ has the \emph{$\operatorname{CAB}_j$ property} if, for each $i \leqslant j$ and for every pair of vertices $x, y \in V(\Gamma)$ with $d(x,y)=i$, the partition $\operatorname{CAB}_i=\operatorname{CAB}_i(x,y)$ is equitable, and its parameters do not depend on the choice of vertices $x$ and $y$.
If $\Gamma$ has the $\operatorname{CAB}_D$ property, then we simply say that it has the \emph{CAB property}.
\begin{lemma}[{cf. \cite[Proposition 2.1]{JK2000-1}}]\label{CAB_1}
Let $\Gamma$ be a distance-regular graph with $a_1 > 0$ and the $\operatorname{CAB}_1$ property. 
Then all local graphs of $\Gamma$ are either
\begin{itemize}
  \item[\rm (i)] connected strongly regular graphs with the same parameters, or
  \item[\rm (ii)] disjoint unions of $(a_1 +1)$-cliques.
\end{itemize}
\end{lemma}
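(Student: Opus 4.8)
The plan is to show that the $\operatorname{CAB}_1$ property, once unwound, is simply a reformulation of strong regularity of each local graph, and then to separate the two conclusions according to whether the ``$\mu$-parameter'' of these strongly regular graphs vanishes. First I would fix a vertex $y$ and examine its local graph $\Delta(y)=\Gamma(y)$, which is regular of valency $a_1$ on $k$ vertices. For any vertex $x\in\Delta(y)$---equivalently, any $x$ with $d(x,y)=1$---the three cells of $\operatorname{CAB}_1(x,y)$ from \eqref{sets:CAB} become $C_1(x,y)=\{x\}$, $A_1(x,y)=\Gamma(x)\cap\Gamma(y)$, and $B_1(x,y)=\Gamma(y)\setminus(\{x\}\cup A_1(x,y))$. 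The observation driving the proof is that $A_1(x,y)$ is exactly the set of neighbors of $x$ inside $\Delta(y)$, while $B_1(x,y)$ is exactly the set of its non-neighbors; hence $\operatorname{CAB}_1(x,y)$ is precisely the neighborhood partition of $\Delta(y)$ determined by the single vertex $x$.

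Next I would read off strong regularity from the equitability of this partition. Each $z\in A_1(x,y)$ has a constant number, call it $\lambda'$, of neighbors inside $A_1(x,y)$, and these are exactly the common neighbors of $x$ and $z$ within $\Delta(y)$; likewise each $w\in B_1(x,y)$ has a constant number $\mu'$ of neighbors inside $A_1(x,y)$, which is the number of common neighbors, within $\Delta(y)$, of the non-adjacent pair $x,w$. Thus, for this fixed $x$, every neighbor of $x$ sees $\lambda'$ common neighbors and every non-neighbor sees $\mu'$ common neighbors. Here the second half of the $\operatorname{CAB}_1$ hypothesis is essential: because $\lambda'$ and $\mu'$ are independent of the chosen adjacent pair, the same two constants serve simultaneously for every $x$ and every $y$, upgrading the per-vertex equitability to the assertion that each $\Delta(y)$ is strongly regular with parameters $(k,a_1,\lambda',\mu')$ common to all vertices $y$.

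Finally I would resolve the dichotomy via $\mu'$, which the previous step shows is a single global constant. If $\mu'>0$, then any two non-adjacent vertices of $\Delta(y)$ have a common neighbor, so $\Delta(y)$ is connected of diameter two, yielding the connected strongly regular graphs of case (i), all with the same parameters. If $\mu'=0$, then no two non-adjacent vertices share a neighbor, so each connected component of $\Delta(y)$ contains no vertex at distance two and is therefore a clique; regularity forces every clique to have $a_1+1$ vertices, giving case (ii). The hard part, such as it is, is purely one of bookkeeping in this last step: one must note that the diameter being at least two (so $b_1\geqslant 1$ and $B_1(x,y)\neq\varnothing$) guarantees $\mu'$ is genuinely defined, and one must invoke the global---rather than merely vertex-by-vertex---constancy of $\mu'$ to conclude that all local graphs fall into the same one of the two alternatives, excluding a mixture of types (i) and (ii).
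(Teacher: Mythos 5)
Your proposal is correct and complete: identifying the $\operatorname{CAB}_1(x,y)$ partition with the neighborhood partition of $x$ inside the local graph $\Delta(y)$, reading off strong regularity from equitability together with the parameter-independence clause of the $\operatorname{CAB}_1$ property, and then splitting on $\mu'>0$ (connected, diameter two) versus $\mu'=0$ (components are cliques, forced to size $a_1+1$ by regularity) is exactly the standard argument. Note that the paper itself supplies no proof of this lemma---it is quoted from \cite[Proposition 2.1]{JK2000-1}---and your argument is essentially the proof of that cited result, so there is nothing substantive to contrast.
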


We say the (triple) intersection number $\gamma$ of $\Gamma$ \emph{exists} if, for every triple of vertices $(x,y,z)$ of $\Gamma$ such that $x$ and $y$ are adjacent and $z$ is at distance $2$ from both $x$ and $y$, the number of common neighbors of $x$, $y$, and $z$ is constant and equal to $\gamma$.
To avoid the degenerate case, we assume that there exists at least one such triple $(x,y,z)$ in $\Gamma$ (i.e., $a_2\neq0$) when we say $\gamma$ exists.
We note that if $\Gamma$ has the $\operatorname{CAB}_2$ property and $a_2\neq 0$, then the intersection number $\gamma$ exists in $\Gamma$. 
In this case, the parameter $\alpha_2$, defined as the number of neighbors in $C_2(x,y)$ of a vertex in $A_2(x,y)$, is equal to $\gamma$ (see Figure~\ref{fig:1}).

We recall the definition of a regular near polygon.
Let $\Gamma$ be a distance-regular graph with diameter $D$.
For integers $s$ and $t$, we say that $\Gamma$ is \emph{of order $(s,t)$} if it is locally the disjoint union of $t+1$ cliques of size $s$.
The graph $\Gamma$ of order $(s,t)$ is called a \emph{regular near polygon} if $a_i=c_ia_1$ for all $1\leq i \leq D-1$.
If $a_D=c_Da_1$, we call $\Gamma$ a \emph{regular near $2D$-gon}; otherwise it is called a \emph{regular near $(2D+1)$-gon}.
We note that if a distance-regular graph with diameter $D \geqslant 2$ and $a_1 > 0$ is locally disconnected and has the $\operatorname{CAB}$ property, then it is a regular near $2D$-gon; see \cite[Theorem 2.3]{JK2000-1}.

\begin{lemma}[{cf. \cite[Theorem 9.11]{DaKoTa}}]\label{dual polar}
Let $\Gamma$ be a regular near $2D$-gon with $D \geqslant 4$. 
If $c_2 \geqslant 3$ or $c_i = i$ $(i=2,3)$, then $\Gamma$ is either a dual polar graph or a Hamming graph.
\end{lemma}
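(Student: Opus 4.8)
The plan is to translate the hypotheses into the language of near polygons and then invoke the recognition theorems for dual polar graphs and Hamming graphs. Recall that a regular near $2D$-gon of order $(s,t)$ is the collinearity graph of a near polygon whose lines are the maximal cliques, each with $s+1$ points, with $t+1$ lines through every point; here $a_1=s-1$, and for two points $x,z$ at distance $2$ the parameter $c_2$ is the number of their common neighbours. First I would record that, since $c_2\geqslant 2$, every pair of points at distance $2$ lies in a \emph{quad}: a geodetically closed subgraph of diameter $2$ that is the collinearity graph of a generalized quadrangle. This is the content of the Shult--Yanushka theory of quads in near polygons (see \cite[Chapter 6]{BCN}), whose ``good pair'' hypothesis is automatic here because the regularity of $\Gamma$ makes every pair of points at distance $2$ a good pair. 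The quad through $x,z$ has lines of size $s+1$, and two opposite points of it have exactly $c_2$ common neighbours, so it is a generalized quadrangle of order $(s,c_2-1)$.

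Next I would split on the two hypotheses. \textbf{Case $c_2\geqslant 3$.} Here each quad has order $(s,c_2-1)$ with $c_2-1\geqslant 2$, so the quads are ``thick enough'' to force a dual-polar-type geometry. I would check, using the constancy of the intersection numbers, that the quads are \emph{big} (every point outside a quad $Q$ is collinear with a full line's worth of points of $Q$, i.e.\ forms a classical pair with $Q$) and that any two intersecting quads generate a geodetically closed subgraph of diameter $3$. With big classical quads in place, the characterization of dual polar spaces due to Cameron (cf.\ \cite[Chapter 9]{BCN}) applies and identifies $\Gamma$ as a dual polar graph. \textbf{Case $c_2=2$ and $c_3=3$.} Now each quad has order $(s,1)$, that is, it is an $(s+1)\times(s+1)$ grid. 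The extra condition $c_3=3$ forces every geodetically closed subgraph of diameter $3$ to be a Hamming product $H(3,s+1)$ and, more generally, pins down a local product (grid-coordinate) structure. Propagating this structure through all diameters by induction---equivalently, applying the Hamming-graph recognition theorem, which characterizes $H(D,s+1)$ among near polygons by $c_i=i$ (cf.\ \cite[Chapter 9]{BCN})---then identifies $\Gamma$ with a Hamming graph.

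The hard part will be the two recognition steps rather than the reduction to quads. For $c_2\geqslant 3$ one must show that the abstract near polygon with thick quads is classical in the precise sense required by Cameron's theorem; this is where the global geometry, and not merely the local parameters, enters, and it is the technical heart of the dual polar classification. For $c_2=2$ and $c_3=3$ the analogous obstacle is establishing the full grid-coordinate structure from the single extra constraint $c_3=3$ and carrying it through every diameter by induction. Both steps are exactly the deep classification results subsumed in \cite[Theorem~9.11]{DaKoTa}, so at the level needed in this paper I would ultimately cite that theorem; the outline above indicates the route by which it is established.
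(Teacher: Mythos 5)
Your proposal and the paper coincide operationally: the paper gives no proof of this lemma at all, importing it wholesale from \cite[Theorem 9.11]{DaKoTa} (which in turn rests on \cite[Theorem 9.4.4]{BCN}: Shult--Yanushka quads, Cameron's characterization of dual polar spaces, and the grid/Hamming recognition), and your write-up ends by citing exactly the same theorem. Your outline of the machinery behind that citation follows the correct route through the literature.

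One assertion in your sketch is false, however, and it sits precisely on the hypothesis that makes the lemma true. Quad existence is \emph{not} ``automatic because the regularity of $\Gamma$ makes every pair of points at distance $2$ a good pair'': the Shult--Yanushka theorem requires thick lines, i.e.\ $s\geqslant 2$ (equivalently $a_1>0$), in addition to $c_2\geqslant 2$. Under this paper's definition, a regular near $2D$-gon may have order $(1,k-1)$ --- any bipartite distance-regular graph with $a_D=0$ qualifies --- and in that case both your first step and the conclusion of the lemma fail: for $q$ odd, Hemmeter's graph has the same intersection array as the bipartite dual polar graph $D_D(q)$, so it is a regular near $2D$-gon with $c_2=q+1\geqslant 3$ and diameter $D\geqslant 4$ available, yet it is neither a dual polar graph nor a Hamming graph. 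The theorem you (and the paper) cite carries the hypothesis $s\geqslant 2$; the paper can suppress it only because every application of the lemma (the remark after Theorem \ref{thm:main}, and the comment after Corollary \ref{locally disconnected}) concerns graphs with $a_1>0$. So, contrary to your closing assessment, the ``reduction to quads'' is not handed to you by regularity; it is exactly where the missing hypothesis enters. A smaller slip: a point $x$ outside a quad $Q$ can never be ``collinear with a full line's worth of points of $Q$'' --- in a near polygon the neighbours of $x$ in $Q$ form a coclique --- and a point--quad pair is \emph{classical} when $x$ has a unique nearest point (a gate) in $Q$ through which all distances to $Q$ factor; verifying this for all point--quad pairs is what Cameron's theorem actually needs.
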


Let $\Gamma$ be a distance-regular graph with diameter $D\geqslant 2$ and $a_1>0$.
Suppose $\Gamma$ has the $\operatorname{CAB}_j$ property for some $1\leq j \leq D$.
For each $1\leqslant i \leqslant j$, $i\neq D$, consider the parameters of the $\operatorname{CAB}_i$ partition (see Figure \ref{fig:1}).
Using these parameters, we define the matrix $Q_i$ by
\begin{equation}\label{Qi}
	Q_i =
	\begin{pmatrix}
	\gamma_i & a_1-\gamma_i & 0 \\
	\alpha_i & a_1 -\beta_i -\alpha_i & \beta_i \\ 
	0 & \delta_i & a_1 -\delta_i
	\end{pmatrix}
	\qquad \qquad 
	(1 \leqslant i \leqslant j, \ i \neq D).
\end{equation}
We call $Q_i$ the \emph{quotient matrix} associated with the $\operatorname{CAB}_i$ partition; cf. {\cite[Theorem 2.4]{JK2000-1}}.
Since $\Gamma$ has the $\operatorname{CAB}_j$ property, it follows from construction and Lemma \ref{CAB_1} that $\Gamma$ is locally strongly regular with the same parameters.
If $\Gamma$ is locally connected, then the eigenvalues of $Q_i$ coincide with those of a local graph of $\Gamma$; cf. \cite[Lemma 2.6]{JK2000-1}.

\begin{figure}
\centering
\begin{tikzpicture}
	\draw (1,0) -- (3,0);\draw (4,0) -- (6,0); 
	\draw (0.5,0) circle [radius=0.5]; 
	\draw (3.5,0) circle [radius=0.5]; 
	\draw (6.5,0) circle [radius=0.5]; 
	\node at (0.5,0) {$c_i$};  
	\node at (3.5,0) {$a_i$};
	\node at (6.5,0) {$b_i$};
	\node [above] at (0.5,0.5) {$C_i$}; 
	\node [below] at (0.5,-0.5) {$\gamma_i$}; 
	\node [below] at (1.6,0) {$a_1-\gamma_i$};
	\node [below] at (2.8,0) {$\alpha_i$}; 
	\node [above] at (3.5,0.5) {$A_i$}; 	
	\node [below] at (3.5,-0.5) {$a_1 -\alpha_i -\beta_i$}; 
	\node [below] at (4.2,0) {$\beta_i$};
	\node [below] at (5.8,0) {$\delta_i$}; 
	\node [above] at (6.5,0.5) {$B_i$}; 
	\node [below] at (6.5,-0.5) {$a_1 -\delta_i$};
\end{tikzpicture}
\caption{The $\operatorname{CAB}_i$ partition and its parameters for $1 \leqslant i \leqslant D-1$}\label{fig:1}
\end{figure}
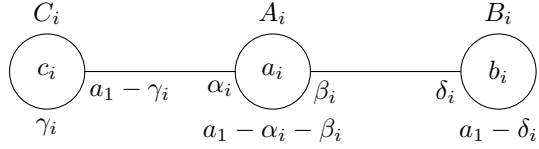

\begin{lemma}[cf. {\cite[Theorem 2.7]{JK2000-1}}] \label{lem:CAB parameters}
Let $\Gamma$ be a locally connected distance-regular graph with $D\geqslant 2$, $a_1\geqslant 2$, and the $\operatorname{CAB}_j$ property for some $1\leqslant j<D$. Let $\delta_0=0$.
Then, for $1\leqslant i \leqslant j$ the parameters $a_i$, $b_i$, $\alpha_i$, $\beta_i$, $\gamma_i$, $\delta_i$ of the $\operatorname{CAB}_i$ partition are expressed in terms of the eigenvalues $a_1>r>s$ of a local graph of $\Gamma$ and the parameters $\delta_{i-1}$ and $c_i$ as follows:
\begin{gather}
	\gamma_i  = \delta_{i-1}, \qquad
	\alpha_i = \frac{c_i(a_1-\delta_{i-1})}{k-c_i-b_i}, \qquad
	\beta_i = \frac{\mu'b_i}{a_1-\delta_{i-1}}, \qquad
	\delta_i = \frac{\mu'(k-c_i)}{a_1-\delta_{i-1}} - \beta_i, 	\label{formulas: parameters} \\
	b_i  = k - c_i - \frac{c_i(a_1 - \delta_{i-1})^2}{(a_1-\delta_{i-1})(a_1 - r - s + \delta_{i-1}) - \mu'(k-c_i)}, \label{formula: b_i}
\end{gather}
and $a_i = k - b_i - c_i$, where $k=(a_1-r)(a_1-s)/\mu'$ and $\mu'=a_1+rs$.
\end{lemma}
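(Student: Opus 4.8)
The plan is to read off all six parameters from the single fact that $\operatorname{CAB}_i = \{C_i, A_i, B_i\}$ is an equitable partition of the local graph $\Delta(y)$, which by Lemma \ref{CAB_1} together with local connectedness is a \emph{connected} strongly regular graph with eigenvalues $a_1 > r > s$; its remaining parameters are $\mu' = a_1 + rs$ and $v' = k = (a_1-r)(a_1-s)/\mu'$ by \eqref{SRG parameters}. Since $1\leqslant i < D$ forces $c_i, b_i \geqslant 1$, and since $\Delta(y)$ is connected, each of the three cells is nonempty: were $A_i$ empty, then $C_i$ and $B_i$ would be mutually non-adjacent (the corner zeros in \eqref{Qi}), disconnecting $\Delta(y)$. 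Hence $Q_i$ has the displayed shape \eqref{Qi} with all row sums equal to $a_1$, and, crucially, its eigenvalues are exactly $a_1, r, s$ (recalled before the statement). I will use four elementary counts in $\Delta(y)$ plus the trace of $Q_i$.

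First I would establish the recursion $\gamma_i = \delta_{i-1}$ by a change of base point. Fix $x,y$ with $d(x,y)=i$ and $w \in C_i(x,y)$, so $d(x,w)=i-1$ and $y \sim w$, giving $y \in B_{i-1}(x,w)$. A vertex $w'$ is a neighbor of $w$ lying in $C_i(x,y)$ precisely when $w' \in A_{i-1}(x,w)$ and $w' \sim y$; thus $\gamma_i$ counts the neighbors of the vertex $y\in B_{i-1}(x,w)$ that lie in $A_{i-1}(x,w)$, which is $\delta_{i-1}$ by the $\operatorname{CAB}_{i-1}$ property (the base case is $\gamma_1 = 0 = \delta_0$, since $C_1(x,y)=\{x\}$ is a single vertex). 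Counting the edges between the cells $C_i$ and $A_i$ of $\Delta(y)$ in two ways then gives $c_i(a_1-\gamma_i) = a_i\alpha_i$, i.e. $\alpha_i = c_i(a_1-\delta_{i-1})/(k-c_i-b_i)$, as stated.

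Next I would obtain $\beta_i$ from the strong regularity. Any $w \in C_i$ and $u \in B_i$ are non-adjacent and so have exactly $\mu'$ common neighbors in $\Delta(y)$; each such common neighbor lies in $A_i$, since the neighbors of $w\in C_i$ lie in $C_i\cup A_i$ while those of $u\in B_i$ lie in $A_i\cup B_i$. Counting the $2$-paths $w \sim t \sim u$ with $t\in A_i$ in two ways yields $a_i\alpha_i\beta_i = c_i b_i \mu'$; combining with $a_i\alpha_i = c_i(a_1-\gamma_i)$ and cancelling $c_i$ gives $\beta_i = \mu' b_i/(a_1-\delta_{i-1})$. Counting the edges between $A_i$ and $B_i$ gives $a_i\beta_i = b_i\delta_i$, hence $\delta_i = \mu' a_i/(a_1-\delta_{i-1}) = \mu'(k-c_i-b_i)/(a_1-\delta_{i-1})$, which rearranges to the stated $\delta_i = \mu'(k-c_i)/(a_1-\delta_{i-1}) - \beta_i$.

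At this point $\alpha_i,\beta_i,\gamma_i,\delta_i$ are all expressed through the single unknown split $a_i + b_i = k - c_i$, and the remaining equation is supplied by the spectrum. Writing $g := a_1 - \delta_{i-1}$, so that $\gamma_i = a_1 - g$, $\alpha_i = c_i g/a_i$, $\beta_i = \mu' b_i/g$, $\delta_i = \mu' a_i/g$, I would equate $\operatorname{tr}(Q_i)$ with $a_1+r+s$; since $\beta_i + \delta_i = \mu'(k-c_i)/g$, this collapses to $2a_1 - g - c_i g/a_i - \mu'(k-c_i)/g = r+s$. Solving for $a_i$, clearing $g$, and using $2a_1 - g - (r+s) = a_1 - r - s + \delta_{i-1}$ yields $a_i = c_i(a_1-\delta_{i-1})^2/\big[(a_1-\delta_{i-1})(a_1-r-s+\delta_{i-1}) - \mu'(k-c_i)\big]$, and then $b_i = k - c_i - a_i$ is exactly \eqref{formula: b_i}; the identity $a_i = k-b_i-c_i$ is merely the partition of $V(\Delta(y))$ into the three cells. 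The main obstacle is this last step: carrying the trace substitution through and recognizing that it reduces to \eqref{formula: b_i}. The conceptual crux, however, lies earlier, in the two-way count giving $\beta_i$ together with the input that the quotient spectrum equals $\{a_1,r,s\}$.
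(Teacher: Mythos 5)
Your proposal is correct, and for the one formula this paper actually proves, namely \eqref{formula: b_i}, it follows the paper exactly: equate $\operatorname{tr}(Q_i) = 2a_1 + \gamma_i - \alpha_i - \beta_i - \delta_i$ with the local eigenvalue sum $a_1 + r + s$ (legitimate since $\Gamma$ is locally connected, so $Q_i$ carries the spectrum of a local graph), substitute \eqref{formulas: parameters}, and solve for $b_i$; your algebra with $g = a_1 - \delta_{i-1}$ reproduces \eqref{formula: b_i} correctly. Where you genuinely depart from the paper is \eqref{formulas: parameters}: the paper does not prove these identities at all but refers to the proof of \cite[Theorem 2.7]{JK2000-1}, whereas you rederive them---$\gamma_i = \delta_{i-1}$ by shifting the base pair from $(x,y)$ to $(x,w)$ with $w \in C_i(x,y)$ and invoking the $\operatorname{CAB}_{i-1}$ property, $\alpha_i$ by counting $C_i$--$A_i$ edges two ways, $\beta_i$ by double-counting the $2$-paths joining $C_i$ to $B_i$ (which must pass through $A_i$, and whose number per non-adjacent pair is the $\mu'$ of the strongly regular local graph), and $\delta_i$ by counting $A_i$--$B_i$ edges. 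These derivations are sound, including the observation that connectedness of the local graph together with $c_i \geqslant 1$ and $b_i \geqslant 1$ forces $a_i \geqslant 1$, which justifies the divisions by $a_i = k - c_i - b_i$. So your version is self-contained where the paper's is not; the cost is length, and the benefit is that the lemma no longer rests on an external reference for the bulk of its content.
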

\begin{proof}
For \eqref{formulas: parameters}, refer to the proof of \cite[Theorem 2.7]{JK2000-1}.
We will show \eqref{formula: b_i} only.
Observe that $\Gamma$ has the $\operatorname{CAB}_1$ property, so it is locally a strongly regular graph with parameters $(k,a_1, \lambda', \mu')$ and eigenvalues $a_1 > r > s$.
Consider the quotient matrix $Q_i$ associated with the $\operatorname{CAB}_i$ partition for $1\leqslant i \leqslant j < D$.
From \eqref{Qi}, we have the trace $\operatorname{tr}(Q_i) = 2a_1 - \alpha_i - \beta_i - \delta_i + \gamma_i$.
Moreover, since the eigenvalues of $Q_i$ equal the eigenvalues of a local graph of $\Gamma$, it follows that $\operatorname{tr}(Q_i) = a_1 + r + s$.
From these two equations for the trace of $Q_i$, we obtain
\begin{equation}\label{eq:trace Qi}
	a_1 - r - s  = \alpha_i+\beta_i+\delta_i - \gamma_i .
\end{equation}
Eliminate $\alpha_i$, $\beta_i$, $\delta_i$, $\gamma_i$ in \eqref{eq:trace Qi} using \eqref{formulas: parameters} and solve for $b_i$. 
Simplify the result to obtain \eqref{formula: b_i}.
\end{proof}

Now, we discuss distance-regular graphs satisfying the $\operatorname{CAB}_2$ property.
Let $\Gamma$ be a locally connected distance-regular graph with diameter $D \geqslant 2$ and $a_1\geqslant 2$.
We assume that $\Gamma$ satisfies the $\operatorname{CAB}_2$ property. 
By Lemma \ref{CAB_1}, every local graph of $\Gamma$ is a connected strongly regular graph with the same parameters $(v', k', \lambda', \mu')$ and eigenvalues $a_1 > r > s$.
Note that $v'=k$ and $k'=a_1$.
Consider the $\operatorname{CAB}_2$ partition. 
By construction, we have $\delta_1=\mu'$.
Using this and $\lambda' = \mu' + r + s$, evaluate equations \eqref{formulas: parameters}, \eqref{formula: b_i} at $i=2$. 
Then the parameters of the $\operatorname{CAB}_2$ partition are 
\begin{equation}\label{constants2}
	\gamma_2 = \delta_{1}=\mu', \qquad
	\alpha_2 = \frac{c_2(a_1 - \mu')}{k-c_2-b_2}, \qquad 
	\beta_2 = \frac{\mu'b_2}{a_1-\mu'}, \qquad 
	\delta_2 = \frac{\mu'(k-c_2-b_2)}{a_1-\mu'},
\end{equation}
and
\begin{equation}\label{b2}
  b_2 = k-c_2 - \frac{c_2(a_1 -\mu')^2}{(a_1 +2\mu' -\lambda')(a_1 -\mu') -\mu'(k-c_2)}.
\end{equation}
With reference to the above discussion, we will express the parameters of the $\operatorname{CAB}_2$ partition in terms of $m=-s$ and $n=r-s$ when $\Gamma$ is locally a Latin square graph or a Steiner graph.
For our convenience, we will use the following notation.
\begin{notation}
Let $b \geqslant 1$. We define the polynomial $G$ in the variable $b$ as follows:
\begin{equation}\label{poly: G}
	G(b) 
	= 16^{10} + 32b^9 + 48b^8 + 32b^7 + 16b^6 + 8b^5 + 8b^4 + 8b^3 + 1.
\end{equation}
Recall $F(b)$ from \eqref{poly: F}.
Note that $G(b) < F(b)$ for all $b \geqslant 1$.
\end{notation}

\smallskip
Recall the intersection number $b_1$ and the second largest eigenvalue $\theta_1$ of $\Gamma$.

\begin{proposition}\label{LS}
Let $\Gamma$ be a distance-regular graph with diameter $D \geqslant 5$, valency $k$, $a_1 > 0$, and satisfying the $\operatorname{CAB}_2$ property. 
Let $b=b_1/(\theta_1+1)$.
Assume that $k > G(b)$, where $G(b)$ is from \eqref{poly: G}.
If $\Gamma$ is locally a Latin square graph $\operatorname{LS}_m(n)$ with $m \geqslant 2$,
then we have
\begin{align}
	&& & \alpha_2 =m, && \beta_2 =(m-1)(n-m^2+m), && \delta_2 = m^2(m-1), && \label{lem:LS eq(1)}\\
	&& & a_2 = m^2(n-m), && b_2 = (n-m)(n-m^2+m), && c_2 = m^2. && \label{lem:LS eq(2)}
\end{align}
\end{proposition}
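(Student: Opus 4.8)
The plan is to exploit the machinery established in Lemma~\ref{lem:CAB parameters} together with the structural classification provided by Corollary~\ref{bound on order of SRG}. Since $\Gamma$ has the $\operatorname{CAB}_2$ property and $a_1>0$, Lemma~\ref{CAB_1} tells us that every local graph is a connected strongly regular graph with common parameters $(v',k',\lambda',\mu')$ and eigenvalues $a_1>r>s$; moreover $v'=k$ and $k'=a_1$. The hypothesis that $\Gamma$ is locally a Latin square graph $\operatorname{LS}_m(n)$ pins these down explicitly via \eqref{eq:LS parameters}: the smallest eigenvalue is $s=-m$, the middle eigenvalue is $r=n-m$ (so that $n=r-s$ matches \eqref{Def:m,n}), the valency is $a_1=k'=m(n-1)$, and $\mu'=m(m-1)$, $\lambda'=(m-1)(m-2)+n-2$. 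The number of vertices is $v'=k=n^2$, which records the first consistency relation I will use.

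First I would substitute these values into the $\operatorname{CAB}_2$ formulas \eqref{constants2} and \eqref{b2}. The key quantity is $c_2$, which is \emph{not} given directly but must be determined. By Lemma~\ref{mu-regular}, since $\Gamma$ is locally strongly regular with parameter $\mu'=m(m-1)$, it is $c_2$-graph-regular with parameter $\mu'$, and $c_2\geqslant \mu'+1$. To nail down $c_2=m^2$ I would use the role of $G(b)$: the hypothesis $k>G(b)$ forces $n$ (hence $v'=n^2=k$) to be large relative to $m$, which via the claw bound \eqref{eq:claw bound} and $\mu$-bound \eqref{eq:mu-bound} rules out the sporadic and complete-multipartite alternatives of Corollary~\ref{bound on order of SRG} and, more importantly, places us in the regime where $n>f(m,\mu')$. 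The plan is to show that $a_1=m(n-1)$ being so large compared to $m$ makes $a_1-\mu'=m(n-1)-m(m-1)=m(n-m)$ strictly positive, so the denominators appearing in \eqref{constants2} and \eqref{b2} are well-behaved, and then to read off $c_2=m^2$ from the requirement that the parameters $\alpha_2,\beta_2,\delta_2,b_2$ be nonnegative integers consistent with the strongly regular structure — in particular, $\alpha_2=\gamma$ is the triple intersection number and must equal $m$.

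With $c_2=m^2$ in hand, the remaining identities become a direct computation: substitute $c_2=m^2$, $a_1=m(n-1)$, $k=n^2$, $\mu'=m(m-1)$, $\lambda'=(m-1)(m-2)+n-2$ into \eqref{b2} to obtain $b_2=(n-m)(n-m^2+m)$, then evaluate $\alpha_2$, $\beta_2$, $\delta_2$ from \eqref{constants2}. For instance $\alpha_2=c_2(a_1-\mu')/(k-c_2-b_2)$; I expect the denominator $k-c_2-b_2=a_2=m^2(n-m)$ to simplify so that $\alpha_2=m$, and similarly $\beta_2=\mu' b_2/(a_1-\mu')=(m-1)(n-m^2+m)$ and $\delta_2=m^2(m-1)$ after cancelling the common factor $m(n-m)=a_1-\mu'$. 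Finally $a_2=k-b_2-c_2$ yields $m^2(n-m)$. All of these are polynomial identities in $m$ and $n$, verifiable by algebra once $c_2=m^2$ is justified.

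The main obstacle is the determination of $c_2=m^2$ rather than the subsequent algebra, which is routine. The subtlety is that $c_2\geqslant \mu'+1=m(m-1)+1$ from Lemma~\ref{mu-regular} is only a lower bound, and one must upgrade this to the exact value $m^2$. I expect to argue that the $c_2$-graph, being a $\mu'$-regular induced subgraph of the Latin square local graph $\operatorname{LS}_m(n)$, is forced to be a disjoint union of $m$ cliques of size $m$ (the lines through a point meeting a fixed line), giving $c_2=m\cdot m=m^2$; the largeness hypothesis $k>G(b)$ guarantees $\Gamma$ is genuinely in the Latin square regime of Corollary~\ref{bound on order of SRG} and not among the finitely many exceptional graphs, so this clique-geometric description of the $\mu$-graph is valid. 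Translating the combinatorial geometry of the transversal design into the precise count $c_2=m^2$, and checking that the largeness bound $G(b)$ is exactly what is needed to exclude the competing cases, is where the care lies.
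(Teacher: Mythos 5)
Your setup, and the algebra you defer to the end (substituting $c_2=m^2$, $k=n^2$, $a_1=m(n-1)$, $\mu'=m(m-1)$, $\lambda'=(m-1)(m-2)+n-2$ into \eqref{constants2} and \eqref{b2} to read off $\beta_2$, $\delta_2$, $b_2$, $a_2$), are correct and agree with the paper. But you have correctly identified the crux --- establishing $c_2=m^2$ --- and your plan for that step does not work. Your fallback geometric argument is false: by Lemma~\ref{mu-regular} the $c_2$-graph is regular of valency $\mu'=m(m-1)$, whereas a disjoint union of $m$ cliques of size $m$ is only $(m-1)$-regular, so the $c_2$-graph can never have the structure you describe (for $m\geqslant 2$). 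In fact it is the \emph{complement} of what you wrote: when $c_2=m^2$ the $c_2$-graph is the complete multipartite graph $K_{m\times m}$ (see the remark following Lemma~\ref{lower bound of k}, citing \cite{KLLLLT2023+}). Your other suggestion --- that $\alpha_2$ ``must equal $m$'' by integrality and nonnegativity --- is not an argument; it asserts exactly what has to be proved, and you never produce the equation that makes integrality bite. You also misallocate the hypothesis $k>G(b)$: Corollary~\ref{bound on order of SRG} is not needed here at all, since the proposition \emph{assumes} the local graph is $\operatorname{LS}_m(n)$; there are no competing local structures to exclude.

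The missing idea is a quantitative one. From the trace identity \eqref{eq:trace Qi} at $i=2$, eliminating $\beta_2$ and $\delta_2$ via \eqref{constants2} and substituting the Latin square parameters, one gets
\begin{equation*}
	\alpha_2 \;=\; m + \frac{(m-1)(c_2-m^2)}{\,n-m\,}.
\end{equation*}
Since $\alpha_2$ is an integer, either $c_2=m^2$ (which forces $\alpha_2=m$ and finishes the hard part), or $n-m$ divides $(m-1)(c_2-m^2)$, whence $n-m\leqslant (m-1)(c_2-m^2)$. In the latter case one bounds $c_2$ by \eqref{c2:bound}, i.e.\ $c_2\leqslant (4b^2+1)(\mu'+1)$ --- this is where the hypothesis $D\geqslant 5$ enters, through the $K_{2,t}$ bound of Lemma~\ref{bound on s} --- and uses $m\leqslant 1+b$ from Lemma~\ref{lem:local min eig} to obtain $n\leqslant 4b^5+4b^4+4b^3+1$, hence $k=n^2\leqslant G(b)$, contradicting $k>G(b)$. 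So the largeness hypothesis is precisely the lever that converts the divisibility constraint into $c_2=m^2$; it plays no role in classifying the local graph. Without this chain (trace identity $\rightarrow$ integrality/divisibility $\rightarrow$ $c_2$-bound $\rightarrow$ contradiction with $k>G(b)$), your proposal has no valid route to $c_2=m^2$, and everything downstream is conditional on it.
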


\begin{proof} 
From \eqref{eq:LS parameters}, we observe that the parameters $(k,a_1,\lambda', \mu')$ of $\operatorname{LS}_m(n)$ are 
\begin{equation}\label{eq(1): pf of LS}
	k= n^2, \qquad a_1 =m(n-1), \qquad \lambda' = (m-1)(m-2) +n-2, \qquad \mu' = m(m-1).
\end{equation}
First, we find the parameters $\alpha_2$ and $c_2$.
In equation \eqref{eq:trace Qi} at $i=2$, eliminate $\beta_2$ and $\delta_2$ using \eqref{constants2}, then solve the resulting expression for $\alpha_2$ to obtain $\alpha_2 = a_1-\lambda'+2\mu' - {\mu'(k-c_2)}/({a_1-\mu'}).$
In this equation, eliminate the parameters $k, a_1, \lambda', \mu'$ using \eqref{eq(1): pf of LS} and express the result in terms of $n, m, c_2$ to have 
\begin{equation}\label{pf eq:lem LS}
	\alpha_2 = m + \frac{(m-1)(c_2-m^2)}{n-m}.
\end{equation}
Since $\alpha_2$ is an integer, we have either $(m-1)(c_2-m^2)=0$ or $n-m$ divides $(m-1)(c_2-m^2)$.
Suppose $(m-1)(c_2-m^2) \neq 0$.
Since $n-m$ divides $(m-1)(c_2-m^2)$, we have
\begin{equation}\label{pf eq(1):lem LS}
	n-m \leqslant (m-1)(c_2-m^2).
\end{equation}
Apply inequality \eqref{c2:bound} to $c_2$ in \eqref{pf eq(1):lem LS} and substitute $\mu'=m(m-1)$ into the result to get
\begin{equation}\label{pf eq(2):lem LS}
	n - m \leqslant (m-1) \big((4b^2+1)(m(m-1)+1) - m^2 \big).
\end{equation}
Simplify inequality \eqref{pf eq(2):lem LS} to get
\begin{equation}\label{pf eq(3):lem LS}
	n \leqslant 4b^2 (m-1)(m(m-1)+1) -m(m-1) +2m-1. 
\end{equation}
By Lemma \ref{lem:local min eig}, we have $m \leqslant 1+b$.
Apply this inequality to \eqref{pf eq(3):lem LS} to obtain
\begin{equation}\label{pf eq(4):lem LS}
	n \leqslant 4b^3 (b^2+b+1) -m(m-1) +2m-1. 
\end{equation} 
Moreover, since $m \geqslant 2$ and $m-1>0$, we have $-m(m-1) \leqslant -2m+2$.
Apply this inequality to \eqref{pf eq(4):lem LS} to obtain
\begin{equation}\label{pf eq(5):lem LS}
	n \leqslant 4b^3 (b^2+b+1) +1 = 4b^5 + 4b^4 + 4b^3 + 1. 
\end{equation} 
Recall that $k=n^2$ from \eqref{eq(1): pf of LS}.
Squaring both sides of \eqref{pf eq(5):lem LS} gives $k \leqslant (4b^5 + 4b^4 + 4b^3 + 1)^2=G(b)$.
This contradicts the assumption $k > G(b)$.
Therefore, we must have $(m-1)(c_2-m^2)=0$, which implies that $c_2=m^2$.
From \eqref{pf eq:lem LS}, we conclude that $\alpha_2=m$.

Next, we find $\beta_2$.
In the third equation in \eqref{constants2}, eliminate $b_2$ using \eqref{b2} to obtain
\begin{equation}\label{eq(3): pf of LS}
	\beta_2 = \frac{\mu'}{a_1-\mu'} \left( k-c_2 - \frac{c_2(a_1 -\mu')^2}{(a_1 +2\mu' -\lambda')(a_1 -\mu') -\mu'(k-c_2)}\right)
\end{equation} 
Express the right-hand side of \eqref{eq(3): pf of LS} in terms of $m$ and $n$ using \eqref{eq(1): pf of LS} and $c_2=m^2$.
Simplify the result to obtain $\beta_2 = (m-1)(n-m^2+m)$.
To find $b_2$, evaluate the right-hand side of \eqref{b2} using \eqref{eq(1): pf of LS} and $c_2=m^2$. 
Simplify the result to obtain $b_2=(n-m)(n-m^2+m)$.
For $\delta_2$, solve the fourth equation in \eqref{constants2} using \eqref{eq(1): pf of LS} and $b_2=(n-m)(n-m^2+m)$ to obtain $\delta_2 = m^2(m-1)$.
Finally, for $a_2$, evaluate $k-b_2-c_2$ using $k=n^2$, $b_2=(n-m)(n-m^2+m)$, $c=m^2$ to get $a_2=m^2(n-m)$. 
The proof is now complete.
\end{proof}

\begin{proposition}\label{Steiner}
Let $\Gamma$ be a distance-regular graph with diameter $D \geqslant 5$, valency $k$, $a_1 > 0$, and satisfying the $\operatorname{CAB}_2$ property. 
Let $b=b_1/(\theta_1+1)$.
Assume that $k > F(b)$, where $F(b)$ is from \eqref{poly: F}.
If $\Gamma$ is locally a Steiner graph $S_m(n)$ with $m \geqslant 2$, then we have
\begin{align}
	&& & \alpha_2 =m+1, && \beta_2 =(m-1)(n-m^2+1), && \delta_2 = m^3, &&  \label{lem:SG para(1)}\\
	&& & a_2 = m^2(n-m), && b_2 = (m-1)(n-m)(n-m^2+1)/m, && c_2 = m(m+1). && \label{lem:SG para(2)}
\end{align}
\end{proposition}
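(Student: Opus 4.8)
The plan is to mirror the proof of Proposition~\ref{LS}, replacing the Latin square parameters by the Steiner parameters from \eqref{eq:SG parameters}: writing $(v',k',\lambda',\mu')=(k,a_1,\lambda',\mu')$ for a local graph, we have $k=\frac{(m+n(m-1))(n+1)}{m}$, $a_1=mn$, $\lambda'=m^2-2m+n$, $\mu'=m^2$, with local eigenvalues $a_1>r=n-m>s=-m$ and $n-m\geqslant 1$. Since the local graph is a connected strongly regular graph, $\Gamma$ is locally connected, so \eqref{constants2} and \eqref{b2} apply. First I would record $\alpha_2$ in closed form: substituting the Steiner parameters into the trace identity \eqref{eq:trace Qi} at $i=2$ (after eliminating $\beta_2,\delta_2$ via \eqref{constants2}) and simplifying yields
\begin{equation*}
\alpha_2 = (m+1) + \frac{m\,(c_2 - m^2 - m)}{n-m},
\end{equation*}
the exact analogue of the identity $\alpha_2 = m + (m-1)(c_2-m^2)/(n-m)$ obtained in the Latin square case.

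Next I would run the integrality-and-divisibility argument to pin down $c_2$. Suppose $c_2\neq m(m+1)$. Since $\alpha_2$ and $m+1$ are integers and $m\geqslant 2$, the displayed formula forces $n-m$ to divide $m(c_2-m^2-m)$, whence $n-m \leqslant m\,|c_2-m^2-m|$. By Lemma~\ref{mu-regular} we have $c_2 \geqslant \mu'+1 = m^2+1$, by \eqref{c2:bound} we have $c_2 \leqslant (4b^2+1)(m^2+1)$, and by Lemma~\ref{lem:local min eig} we have $m\leqslant 1+b$. In the sub-case $c_2<m(m+1)$ one has $|c_2-m^2-m|\leqslant m-1$, so $n\leqslant m^2\leqslant (1+b)^2$, which is far below the target and easy to dispose of. The binding sub-case is $c_2>m(m+1)$, where the upper bound on $c_2$ gives $n-m\leqslant m\big((4b^2+1)(m^2+1)-m^2-m\big)$; substituting $m\leqslant 1+b$ (using $m(m^2+1)\leqslant (1+b)\big((1+b)^2+1\big)$ and $2m-m^2\leqslant 1$) then yields $n \leqslant 4b^5+12b^4+16b^3+8b^2+1$.

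The delicate point is converting this into $k\leqslant F(b)$, and here the Steiner case genuinely differs from the Latin square case. There $k=n^2$ exactly, so squaring the bound on $n$ produced $G(b)$; here
\begin{equation*}
k = \frac{(m+n(m-1))(n+1)}{m} = (n+1)^2 - \frac{n(n+1)}{m},
\end{equation*}
and the crude estimate $k<(n+1)^2$ is just barely too weak, its $b^9$-coefficient being $96$, which exceeds the coefficient $80$ in $F$. I therefore keep the correction term $-n(n+1)/m$ and use $m\leqslant 1+b$ to estimate $n(n+1)/m \geqslant n(n+1)/(1+b)$; the division by $1+b$ lowers the $b^9$-coefficient from $96$ to $80$, and a routine (if lengthy) simplification then gives $k\leqslant F(b)$. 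This contradicts the hypothesis $k>F(b)$, which is precisely why the statement invokes $F$ rather than the smaller $G$. Hence $c_2=m(m+1)$, and then $\alpha_2=m+1$ from the displayed formula.

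Finally, with $c_2=m(m+1)$ fixed, the remaining parameters follow by direct substitution, exactly as in Proposition~\ref{LS}: eliminating $b_2$ in the expression for $\beta_2$ in \eqref{constants2} via \eqref{b2} gives $\beta_2=(m-1)(n-m^2+1)$; \eqref{b2} gives $b_2=(m-1)(n-m)(n-m^2+1)/m$; then $\delta_2=\mu'(k-c_2-b_2)/(a_1-\mu')=m^3$ and $a_2=k-b_2-c_2=m^2(n-m)$, establishing \eqref{lem:SG para(1)} and \eqref{lem:SG para(2)}. The main obstacle is the polynomial estimate of the third paragraph: unlike the Latin square case one cannot afford the lossy bound $k<(n+1)^2$, and the correction term $-n(n+1)/m$ must be tracked carefully to land below $F(b)$.
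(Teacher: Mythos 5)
Your proposal follows the same route as the paper's proof: the closed form $\alpha_2=(m+1)+m\bigl(c_2-m(m+1)\bigr)/(n-m)$, the integrality-and-divisibility argument combined with the bound \eqref{c2:bound} and $m\leqslant 1+b$ from Lemma~\ref{lem:local min eig} to pin down $c_2=m(m+1)$, and then direct substitution into \eqref{constants2} and \eqref{b2} for the remaining parameters. Your explicit treatment of the sub-case $c_2<m(m+1)$ (using $c_2\geqslant\mu'+1=m^2+1$ from Lemma~\ref{mu-regular}) is a point where you are more careful than the paper, whose inequality \eqref{pf eq(2):pf of SG} tacitly assumes $c_2>m(m+1)$.

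However, the decisive estimate in your third paragraph fails as written, and this is a genuine gap. From $n\leqslant 4b^2m(m^2+1)+(2m-m^2)$ you invoked $2m-m^2\leqslant 1$, arriving at $n\leqslant N:=4b^5+12b^4+16b^3+8b^2+1$. Plugging $N$ into your estimate $k\leqslant (n+1)^2-n(n+1)/(1+b)=(n+1)(nb+b+1)/(1+b)$ (whose right-hand side is increasing in $n$) yields
\[
k\ \leqslant\ \frac{(N+1)(Nb+b+1)}{1+b}\ =\ F(b)+\frac{8b^6+24b^5+32b^4+16b^3+3b+1}{1+b}\ >\ F(b),
\]
which at $b=1$ gives $903$ versus $F(1)=861$. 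So your chain of inequalities does not contradict the hypothesis $k>F(b)$, and $c_2=m(m+1)$ is not established for valencies in the intermediate range (for instance $861<k\leqslant 903$ when $b=1$). The repair is one line: since $m\geqslant 2$ you have $2m-m^2\leqslant 0$, hence $n\leqslant N-1=(1+b)C$ where $C:=4b^4+8b^3+8b^2$, and then your computation closes exactly, because at $n=(1+b)C$ one has $(nb+b+1)/(1+b)=bC+1$ and therefore $k\leqslant\bigl((1+b)C+1\bigr)\bigl(bC+1\bigr)=F(b)$. This is precisely the product the paper forms in \eqref{pf eq(6):pf of SG}, where the bound is kept in the coupled form $n\leqslant mC$ so that $n(m-1)/m\leqslant bC$ and $n+1\leqslant(1+b)C+1$ can be applied to the two factors of $k=\bigl(1+n(m-1)/m\bigr)(n+1)$. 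In short, your structural observation that the correction term $-n(n+1)/m$ must be retained is exactly right and matches what the paper implicitly does, but the slack constant $+1$ you allowed yourself destroys the inequality you need; tighten it and your argument coincides with the paper's.
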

\begin{proof}
From \eqref{eq:SG parameters}, we observe that the parameters $(k,a_1,\lambda', \mu')$ of $S_m(n)$ are 
\begin{equation}\label{eq(1): pf of SG}
	k= \frac{(m+n(m-1))(n+1)}{m}, \qquad a_1 =mn, \qquad \lambda' =  m^2-2m+n, \qquad \mu' = m^2.
\end{equation} 
In a similar manner to the proof of Proposition \ref{LS}, we have
\begin{equation}\label{pf eq:alpha_2}
	\alpha_2 = m+1 + \frac{m(c_2 -m(m+1))}{n-m}.
\end{equation}
Since $\alpha_2$ is an integer, we have either $m(c_2 -m(m+1))=0$ or $n-m$ divides $m(c_2 -m(m+1))$.
If $c_2 \neq m(m+1)$, then 
\begin{equation}\label{pf eq(2):pf of SG}
	n-m \leqslant m(c_2 -m(m+1)).
\end{equation}
Apply inequality \eqref{c2:bound} to $c_2$ in \eqref{pf eq(2):pf of SG} and substitute $\mu' = m^2$ into the result to get
\begin{equation}\label{pf eq(3):pf of SG}
	n - m \leqslant m\big( (4b^2+1)(m^2+1) - m(m+1) \big).
\end{equation}
Simplify inequality \eqref{pf eq(3):pf of SG} to get
\begin{equation}\label{pf eq(4):pf of SG}
	n \leqslant m \big( 4b^2(m^2+1) - m +2 \big).
\end{equation}
Note that $2\leqslant m \leqslant 1+b$.
Apply this to \eqref{pf eq(4):pf of SG} to obtain
\begin{equation}\label{pf eq(5):pf of SG}
	n \leqslant m (4b^4 + 8b^3 + 8b^2).
\end{equation}
Recall that $k = (m+n(m-1))(n+1)/n$ from \eqref{eq(1): pf of SG}.
Apply inequality \eqref{pf eq(5):pf of SG} to this equation, along with $2\leqslant m \leqslant 1+b$, to obtain
\begin{equation}\label{pf eq(6):pf of SG}
	k = \left(\frac{m+n(m-1)}{m}\right)(n+1)\leqslant \big(1+b(4b^4 + 8b^3 + 8b^2)\big)\big(1+(1+b)(4b^4 + 8b^3 + 8b^2)\big) = F(b).
\end{equation}
This contradicts the given assumption that $k > F(b)$. 
Therefore, we must have $m(c_2-m(m+1))=0$, which implies that $c_2 = m(m+1)$.
From \eqref{pf eq:alpha_2}, we conclude that $\alpha_2 = m+1$.
The remaining parameters in \eqref{lem:SG para(1)} and \eqref{lem:SG para(2)} are obtained similarly to Proposition \ref{LS}. 
The proof is complete.
\end{proof}

\begin{remark}\label{rmk:Latin, Steiner}
(i) Let $\Gamma$ be locally a Latin square graph $LS_2(n)$ with $n \geqslant 3$.
Observe that $LS_2(n)$ has smallest eigenvalue $-2$ and is isomorphic to the square $(n \times n)$-grid.
By Proposition \ref{LS}, we have $c_2 = 4$. 
Therefore, by construction the $c_2$-graph of $\Gamma$ is a quadrangle.

\medskip
(ii) Let $\Gamma$ be locally a Steiner graph $S_2(n)$ with $n \geqslant 3$.
Observe that $S_2(n)$ has smallest eigenvalue $-2$ and is isomorphic to the triangular graph $T(n+2)$.
By Proposition \ref{Steiner}, we have $c_2 = 6$.
Therefore, by construction the $c_2$-graph of $\Gamma$ is a $4$-regular graph on $6$ vertices, i.e., an octahedron.
\end{remark}

\section{The $1$-homogeneous distance-regular graphs}\label{sec:1-homoDRGs}
In this section, we discuss distance-regular graphs that have the $1$-homogeneous property. 
We then prove our main result, Theorem \ref{thm:main}.
We begin by recalling the definition of $i$-homogeneous graphs and some known results concerning these graphs.

Let $\Gamma$ be a connected graph with diameter $D$. 
For $0 \leqslant i \leqslant D$, fix a pair of vertices $x$ and $y$ in $\Gamma$ with $d(x,y)=i$.
For $0 \leqslant h, j \leqslant D$, we define the subset $D^h_j(x, y)$ of $V(\Gamma)$ by
\begin{equation}
	D^h_j(x,y) := \Gamma_j(x) \cap \Gamma_h(y).
\end{equation}
Let $\pi(x,y)$ denote the collection of nonempty sets $D^h_j(x,y)$ for $0\leqslant h,j \leqslant D$.
Observe that $\pi(x,y)$ forms a distance partition of $V(\Gamma)$.
The graph $\Gamma$ is said to be \emph{$i$-homogeneous} whenever the partition $\pi(x,y)$ is equitable, and the parameters of $\pi(x,y)$ are independent of the choice of vertices $x$ and $y$; see \cite{Nomura1994}.
Our discussion centers on the $1$-homogeneous distance-regular graphs.
We note that if $\Gamma$ is a $1$-homogeneous distance-regular graph with $D\geqslant 2$ and $a_2\neq 0$, then the intersection number $\gamma$ exists.
Juri\v si\' c and Koolen \cite{JK2000-1} studied these graphs in detail using the CAB property and characterized them as follows.

\begin{lemma}[{cf.~\cite[Theorem~3.1]{JK2000-1}}]\label{CAB_D}
Let $\Gamma$ be a distance-regular graph with diameter $D$ and $a_1 > 0$. 
Then $\Gamma$ is $1$-homogeneous if and only if it has the $\operatorname{CAB}$ property.
\end{lemma}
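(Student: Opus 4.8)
The plan is to prove the two implications separately, in each case reducing the global bivariate distance partition attached to an edge to the single-vertex local partitions that define the CAB property. The bridge between the two notions is the elementary observation that, for an edge $\{x,y\}$, every cell $D^1_j(x,y)$ of $\pi(x,y)$ lying in the row $d(\cdot,y)=1$ is contained in the local graph $\Delta(y)$, and the three such cells $D^1_0(x,y)=\{x\}$, $D^1_1(x,y)=A_1(x,y)$, $D^1_2(x,y)=B_1(x,y)$ are exactly the blocks of $\operatorname{CAB}_1(x,y)$. Hence a count of neighbors between two of these cells --- which is a parameter of the partition $\pi(x,y)$ --- is literally an adjacency count inside $\Delta(y)$. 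This identifies the $\operatorname{CAB}_1$ parameters with the ``$d(\cdot,y)=1$'' parameters of $\pi(x,y)$, and symmetrically for $\Delta(x)$. Throughout, $a_1>0$ guarantees the relevant local graphs are nonempty so the CAB partitions are meaningful.

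For the direction 1-homogeneous $\Rightarrow$ CAB, I would argue by induction on $i$ that $\Gamma$ has the $\operatorname{CAB}_i$ property. The base case $i=1$ is the observation above: equitability of $\pi(x,y)$ with edge-independent parameters forces $\operatorname{CAB}_1(x,y)$ to be equitable with fixed parameters. For the inductive step, fix $x,y$ with $d(x,y)=i$ and a vertex $z\in\Gamma(y)$; the goal is that $|\Gamma(z)\cap C_i(x,y)|$, $|\Gamma(z)\cap A_i(x,y)|$, $|\Gamma(z)\cap B_i(x,y)|$ depend only on the block ($C_i$, $A_i$, or $B_i$) containing $z$. The idea is to apply 1-homogeneity to the edge $\{y,z\}$: in $\pi(y,z)$ the far vertex $x$ lands in the cell $D^{d(x,z)}_{i}(y,z)$, and $d(x,z)\in\{i-1,i,i+1\}$ records precisely the block of $z$. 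A neighbor $w$ of $z$ that also lies in $\Gamma(y)$ is a common neighbor of $y$ and $z$, hence sits in $D^1_1(y,z)$, and its block in $\operatorname{CAB}_i(x,y)$ is determined by $d(w,x)$. The fixed parameters of the equitable partition $\pi(y,z)$, together with the inductive hypothesis $\operatorname{CAB}_{i-1}$ applied along the edge, should pin down how many such $w$ realize each value of $d(w,x)$, independently of all choices.

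For the converse CAB $\Rightarrow$ 1-homogeneous, I would show directly that for an edge $\{x,y\}$ and any $w\in D^h_j(x,y)$ the numbers $|\Gamma(w)\cap D^{h'}_{j'}(x,y)|$ are constant and edge-independent. Two marginal refinements of $\Delta(w)$ are available by hypothesis: partitioning $\Delta(w)$ by distance to $x$ is $\operatorname{CAB}_j(x,w)$, and by distance to $y$ is $\operatorname{CAB}_h(y,w)$, both equitable with fixed parameters. Using the explicit parameters $\gamma_i,\alpha_i,\beta_i,\delta_i$ of the quotient matrices $Q_i$ from Lemma \ref{lem:CAB parameters} and the row-sum constraints $c_i+a_i+b_i=k$ coming from distance-regularity, I would solve for the joint cell sizes by induction on $j$ (equivalently on distance from the edge), the base case again being the row and column through $x$ and $y$.

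\textbf{Main obstacle.} The crux in both directions is passing from single-vertex (marginal) local data to the joint bivariate distribution recorded by the cells $D^{h'}_{j'}$. Knowing how $\Delta(w)$ splits by distance-to-$x$ and, separately, by distance-to-$y$ does not by itself determine the joint split; the extra rigidity must come from distance-regularity --- the fixed intersection numbers $a_i,b_i,c_i$ and the matching of the eigenvalues of $Q_i$ to those of the local graph (cf.\ Figure \ref{fig:1} and Lemma \ref{lem:CAB parameters}) --- together with the propagation of the equitable parameters along a geodesic supplied by the induction. Making this reconstruction precise, and verifying that the resulting parameters are genuinely independent of $x$, $y$ (and of the chosen geodesic), is where the real work lies; the rest is bookkeeping.
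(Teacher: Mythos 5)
First, a point of comparison: the paper itself offers no proof of this lemma to measure your attempt against --- it is imported from Juri\v{s}i\'{c} and Koolen \cite[Theorem~3.1]{JK2000-1}, where the equivalence is a substantial theorem, not a formality. Judged on its own, your proposal is a road map whose two decisive steps are missing. Your base case is correct: for an edge $\{x,y\}$ the cells of $\pi(x,y)$ inside $\Gamma(y)$ are exactly $\{x\}$, $A_1(x,y)$, $B_1(x,y)$, so $1$-homogeneity restricts to give $\operatorname{CAB}_1$. But in the forward direction your inductive step does not close as described. Equitability of $\pi(y,z)$ yields only adjacency counts between its cells; it carries no information about the distances from the vertices of $D^1_1(y,z)$ to the one particular vertex $x$ sitting inside the cell $D^{d(x,z)}_i(y,z)$, since ``distance to $x$'' is not an invariant of that partition. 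So the claim that the parameters of $\pi(y,z)$ together with $\operatorname{CAB}_{i-1}$ ``should pin down'' how many common neighbors $w$ of $y,z$ realize each value of $d(w,x)$ is an assertion, not an argument. What is needed is a concrete counting identity, for example: set $T_\ell:=|\Gamma_\ell(x)\cap\Gamma(y)\cap\Gamma(z)|$ and double count pairs $(u,w)$ with $u\in\Gamma(x)$, $w\in\Gamma(y)\cap\Gamma(z)$, $d(u,w)=\ell-1$. Summing over $u$ first gives $\sum_{p,q}|\Gamma(x)\cap\Gamma_p(y)\cap\Gamma_q(z)|\, t_{\ell-1}(p,q)$, which is constant by $1$-homogeneity (these are parameters of $\pi(y,z)$) and induction on $\ell$; summing over $w$ first gives $b_{\ell-2}T_{\ell-2}+a_{\ell-1}T_{\ell-1}+c_\ell T_\ell$, and since $c_\ell\neq 0$ one solves for $T_\ell$. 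Constancy of all the $T_\ell$ is exactly the $\operatorname{CAB}$ property. Some identity of this kind is the actual content of the direction, and it is absent from your sketch.

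The converse is in worse shape: you explicitly defer the key step (``where the real work lies''), but that step \emph{is} the theorem. The $\operatorname{CAB}$ hypothesis gives constancy of triple intersection numbers $|\Gamma_\ell(x)\cap\Gamma(y)\cap\Gamma(z)|$ in which two of the three conditions are adjacencies to the ends of an edge, while $1$-homogeneity demands constancy of $|\Gamma(w)\cap\Gamma_{j'}(y)\cap\Gamma_{h'}(z)|$, where the single adjacency sits on the other side; converting the first family into the second is precisely where Juri\v{s}i\'{c} and Koolen invest their effort (via the quotient-matrix parametrization), and your marginals-versus-joint remark only names the difficulty. Two further problems with the plan as stated: it leans on Lemma~\ref{lem:CAB parameters}, which assumes $\Gamma$ is locally connected, so the locally disconnected case (local graphs a disjoint union of cliques) must be split off and handled through the regular near $2D$-gon route; and the hypothesis $a_1>0$ is doing more than guaranteeing nonempty local graphs --- when $a_1=0$ every local graph is edgeless, so the $\operatorname{CAB}$ property holds vacuously for every bipartite distance-regular graph while $1$-homogeneity does not follow, which shows the equivalence genuinely fails without this hypothesis and that a correct proof must use it in an essential way.
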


We consider $1$-homogeneous distance-regular graphs that are locally disconnected.
\begin{corollary}[{cf.~\cite[Corollary~3.3]{JK2000-1}}]\label{locally disconnected}
Let $\Gamma$ be a locally disconnected $1$-homogeneous distance-regular graph with diameter $D \geqslant 2$ and $a_1 > 0$.
Then $\Gamma$ is a regular near $2D$-gon.
\end{corollary}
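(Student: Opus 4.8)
The plan is to convert the $1$-homogeneous hypothesis into the $\operatorname{CAB}$ property and then exploit the disconnectedness of the local graphs, so that the near-polygon structure follows from material already recorded for $\operatorname{CAB}$ graphs in Section~\ref{Sec:CABprop}. First I would invoke Lemma~\ref{CAB_D}: since $\Gamma$ is $1$-homogeneous with $a_1>0$, it has the full $\operatorname{CAB}$ property, and in particular the $\operatorname{CAB}_1$ property. Because $\Gamma$ is locally disconnected, Lemma~\ref{CAB_1} rules out alternative (i) and forces alternative (ii), so every local graph $\Delta(y)$ is a disjoint union of cliques. As $\Delta(y)$ is $a_1$-regular on $k$ vertices, each component is a clique of size $a_1+1$ and there are $t+1 = k/(a_1+1)$ of them; hence $\Gamma$ is of order $(a_1+1,t)$ in the sense of Section~\ref{Sec:CABprop}.

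It then remains only to verify the regularity condition $a_i = c_i a_1$ for $1\leqslant i\leqslant D$ defining a regular near $2D$-gon, and this is exactly the content of the observation recorded just before Lemma~\ref{dual polar} (see \cite[Theorem~2.3]{JK2000-1}): a distance-regular graph with $D\geqslant 2$ and $a_1>0$ that is locally disconnected and has the $\operatorname{CAB}$ property is a regular near $2D$-gon. Applying this observation to $\Gamma$, which we have just shown is locally disconnected and $\operatorname{CAB}$, immediately yields that $\Gamma$ is a regular near $2D$-gon, completing the argument. Thus the proof is a two-step reduction: Lemma~\ref{CAB_D} supplies the $\operatorname{CAB}$ property, and the cited structural fact supplies the conclusion.

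For a self-contained derivation of the regularity condition I would argue directly from the quotient matrices $Q_i$ of \eqref{Qi}. Fixing $x,y$ with $d(x,y)=i$ and considering the partition $\operatorname{CAB}_i(x,y)$ of $\Delta(y)$ into $C_i, A_i, B_i$, the vanishing of the $(C_i,B_i)$ and $(B_i,C_i)$ entries of $Q_i$ shows that no edge of $\Delta(y)$ joins $C_i$ to $B_i$; since each component of $\Delta(y)$ is a clique, every line through $y$ lies entirely in $C_i\cup A_i$ or entirely in $A_i\cup B_i$. Reading off Figure~\ref{fig:1}, a line meeting $C_i$ contains exactly $\gamma_i+1$ vertices of $C_i$, while a line meeting $B_i$ contains exactly $a_1-\delta_i+1$ vertices of $B_i$. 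The main obstacle is to show that these intersection patterns are forced into the near-polygon pattern, namely $\gamma_i=0$ (one $C_i$-vertex per descending line, giving the $c_i$ lines that contribute $c_i a_1$ vertices to $A_i$) together with the vanishing of the stray $A_i$-contributions from the remaining lines (so that $\delta_i=0$ and there are no lines lying wholly inside $A_i$), which is equivalent to $a_i=c_i a_1$. I would establish this by induction on $i$, starting from $\delta_0=0$ and using the identity $\gamma_i=\delta_{i-1}$ together with the constancy of the $\operatorname{CAB}$ parameters to propagate the vanishing up the diameter. Since this induction is precisely what \cite[Theorem~2.3]{JK2000-1} carries out, the two-step route above is the more economical path within the present framework.
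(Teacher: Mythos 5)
Your proof is correct and takes essentially the same route as the paper: invoke Lemma~\ref{CAB_D} to convert $1$-homogeneity into the $\operatorname{CAB}$ property, then apply the cited fact from \cite[Theorem~2.3]{JK2000-1} (recorded just before Lemma~\ref{dual polar}) that a locally disconnected distance-regular graph with $a_1>0$ and the $\operatorname{CAB}$ property is a regular near $2D$-gon. The extra material on cliques via Lemma~\ref{CAB_1} and the sketched induction on the quotient matrices is not needed, since that content is exactly what the cited theorem provides.
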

\begin{proof}
Since $\Gamma$ is $1$-homogeneous, it has the $\operatorname{CAB}$ property by Lemma~\ref{CAB_D}.
As $\Gamma$ is also locally disconnected with $a_1 > 0$, it follows from \cite[Theorem 2.3]{JK2000-1} that $\Gamma$ is a regular near $2D$-gon.
\end{proof}

We have a comment.
Suppose that $\Gamma$ is a locally disconnected $1$-homogeneous distance-regular graph with $D \geqslant 4$ and $a_1 > 0$, and assume that either $c_2 \geqslant 3$ or $c_i = i$ for $i = 2,3$.
Then, by Corollary~\ref{locally disconnected}, $\Gamma$ is a regular near $2D$-gon. 
It follows from Lemma~\ref{dual polar} that $\Gamma$ is either a dual polar graph or a Hamming graph.

We now discuss locally connected $1$-homogeneous distance-regular graphs. 
There are some studies on $1$-homogeneous distance-regular graphs whose $c_2$-graphs are complete multipartite; cf. \cite{JK2003}, \cite{JMT2010}. 
In particular, Juri\v si\' c and Koolen \cite{JK2003} classified $1$-homogeneous distance-regular graphs whose $c_2$-graphs are the Cocktail Party graph.

\begin{theorem}[{\cite[Theorem 1.1]{JK2003}}]\label{1-homogenious JK}
Let $\Gamma$ be a $1$-homogeneous distance-regular graph with diameter $D \geqslant 2$, whose $c_2$-graph is a connected Cocktail Party graph. 
Then $\Gamma$ is one of the following graphs:
\begin{enumerate}[\normalfont(i)]
  \item a Johnson graph $J(2D,D)$,
  \item a halved $\ell$-cube with $\ell \in \{2D,2D+1\}$,
  \item a folded Johnson graph $\bar{J}(4D,2D)$,
  \item a folded halved $4D$-cube,
  \item a Cocktail Party graph $K_{t \times 2}$ with $t \geqslant 3$.
  \item the Schl\"{a}fli graph with intersection array $\{16,5;1,8\}$,
  \item the Gosset graph with intersection array $\{27,10,1; 1,10,27\}$.
\end{enumerate}
\end{theorem}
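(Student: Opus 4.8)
The plan is to reduce everything to the local graph $\Delta$ of $\Gamma$ and then apply local recognition. Since the $c_2$-graph is a \emph{connected} Cocktail Party graph $K_{t\times2}$ we have $t\geqslant2$ and $c_2=2t\geqslant4$, and since $K_{t\times2}$ contains edges, every $\mu$-graph---being an induced subgraph of a local graph---forces that local graph to contain an edge, whence $a_1>0$. By Lemma~\ref{CAB_D} the graph $\Gamma$ then has the $\operatorname{CAB}$ property, so by Lemma~\ref{CAB_1} each $\Delta$ is either a connected strongly regular graph (all with common parameters $(k,a_1,\lambda',\mu')$) or a disjoint union of $(a_1+1)$-cliques. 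I would first discard the disconnected case: a $\mu$-graph is an induced subgraph of $\Delta$, an induced subgraph of a disjoint union of cliques is again a disjoint union of cliques, and $K_{t\times2}$ with $t\geqslant2$ is connected but not complete, a contradiction (alternatively, Corollary~\ref{locally disconnected} would make $\Gamma$ a regular near $2D$-gon, whose $\mu$-graphs are unions of cliques). Thus $\Delta$ is a connected strongly regular graph with smallest eigenvalue $-m$, $m\geqslant2$, and by Lemma~\ref{mu-regular} the $c_2$-graph is regular of valency $\mu'$; matching valencies with $K_{t\times2}$ gives $\mu'=c_2-2$.

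The crux is to prove that the local smallest eigenvalue equals $-2$, i.e.\ $m=2$. For this I would feed $\Delta$ into Corollary~\ref{bound on order of SRG}: it is a complete multipartite graph with classes of size $m$, a Latin square graph $\operatorname{LS}_m(n)$, a Steiner graph $S_m(n)$, or one of finitely many graphs of order at most $\varphi(m)$. In the Latin square case Proposition~\ref{LS} (valid once $k$ is large) yields $c_2=m^2$, while $\mu'=m(m-1)$; together with $c_2=\mu'+2$ this gives $m^2=m(m-1)+2$, forcing $m=2$. In the Steiner case Proposition~\ref{Steiner} gives $c_2=m(m+1)$ and $\mu'=m^2$, and $c_2=\mu'+2$ again forces $m=2$. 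In the complete multipartite case the $\mu$-graph is itself complete multipartite with classes of size $m$, so the Cocktail Party hypothesis (classes of size $2$) forces $m=2$. The finitely many exceptional $\Delta$ are treated by a direct, finite verification (here $k=|V(\Delta)|\leqslant\varphi(m)$ is bounded), checking which can carry a Cocktail Party $\mu$-graph. I expect this parameter matching---and in particular the finite exceptional analysis together with the bookkeeping needed when $k$ is too small for Propositions~\ref{LS} and~\ref{Steiner}---to be the main obstacle. Once $m=2$, Corollary~\ref{bound on order of SRG} (with $m=2$) identifies $\Delta$ as a grid $\operatorname{LS}_2(n)$, a triangular graph $T(n)$, a Cocktail Party graph, or one of the finitely many strongly regular graphs with smallest eigenvalue $-2$.

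It remains to recognise $\Gamma$ from $\Delta$ in each family. If $\Delta=\operatorname{LS}_2(n)$ is a grid, then Remark~\ref{rmk:Latin, Steiner}(i) gives $c_2=4$ and a quadrangle $\mu$-graph, and Lemma~\ref{lem:grid} shows $\Gamma$ is the Johnson graph $J(2D,D)$ or the folded Johnson graph $\bar J(4D,2D)$. If $\Delta=T(n)$ is triangular, then Remark~\ref{rmk:Latin, Steiner}(ii) gives $c_2=6$ and an octahedron $\mu$-graph, and the corresponding local recognition of the halved cube (the triangular-graph analogue of Lemma~\ref{lem:grid}) shows $\Gamma$ is a halved $\ell$-cube with $\ell\in\{2D,2D+1\}$ or a folded halved $4D$-cube. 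If $\Delta$ is a Cocktail Party graph, then $\Gamma$ is locally Cocktail Party, which forces $\Gamma$ itself to be a Cocktail Party graph $K_{t\times2}$ of diameter two. Finally, for the exceptional $\Delta$ a finite check shows that only the complement of the Clebsch graph and the Schl\"{a}fli graph survive, occurring as the local graphs of the Schl\"{a}fli graph and the Gosset graph respectively. Collecting the cases yields the seven graphs listed in the statement.
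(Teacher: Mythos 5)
First, a point of comparison: the paper does not prove this statement at all --- it is imported verbatim from [JK2003] and is in fact an \emph{input} to the paper's own arguments (it is invoked in the proof of Theorem~\ref{thm:main} to settle the locally-Steiner case). The original proof of Juri\v{s}i\'{c}--Koolen runs along genuinely different lines from yours: from the $1$-homogeneous (CAB) parameters one shows that the local graph is strongly regular with smallest eigenvalue exactly $-2$ (or handles the conference case directly), then invokes Seidel's \emph{explicit} classification of strongly regular graphs with smallest eigenvalue $-2$ (cocktail party graphs, square grids, triangular graphs, Petersen, Shrikhande, Clebsch, Schl\"{a}fli, Chang), and finishes by local recognition in each family, eliminating Petersen, Shrikhande and Chang. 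That route works uniformly for every $D \geqslant 2$ and needs no asymptotic bounds.

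Your route has two structural failures. First, Propositions~\ref{LS} and~\ref{Steiner} --- the crux of your reduction to $m=2$ --- require $D \geqslant 5$ as well as $k > G(b)$, resp.\ $k > F(b)$; they rest on the bound \eqref{c2:bound}, which in turn comes from Lemma~\ref{bound on s}, again only valid for $D \geqslant 5$. The theorem you are proving assumes only $D \geqslant 2$, and its exceptional conclusions (the cocktail party graphs, the Schl\"{a}fli graph, the Gosset graph) have diameter $2$ or $3$: they live exactly where this machinery is silent, so your argument could never produce cases (v)--(vii) except through the unexecuted ``finite verification.'' Second, those deferred verifications are not finite in any executable sense: at that stage of the argument $b = b_1/(\theta_1+1)$ is not bounded in terms of the local parameters (Lemma~\ref{lem:local min eig} gives $m \leqslant 1+b$, not the reverse), so ``$k \leqslant G(b)$'' describes an unbounded family of candidates, and case (iv) of Corollary~\ref{bound on order of SRG} (``finitely many further graphs'') is an ineffective statement with no explicit list to check. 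This is precisely why Theorem~\ref{thm:main} must carry the unresolved alternative (vi) $k \leqslant F(b)$: the Neumaier/Sims machinery yields a dichotomy ``known family or bounded valency,'' never the sharp classification asserted here. Two further gaps: before applying Corollary~\ref{bound on order of SRG} you must exclude that the local graph is a conference graph (its eigenvalues need not be integral), and for $D \geqslant 2$ this cannot be dismissed by Proposition~\ref{conference} plus a diameter count, since Taylor graphs have diameter $3$; and the ``triangular-graph analogue of Lemma~\ref{lem:grid}'' you appeal to is not in the paper --- indeed the paper handles that very case by citing the theorem you are trying to prove.
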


\smallskip
Recently, Koolen et al. \cite{KLLLLT2023+} studied $1$-homogeneous distance-regular graphs that are locally block graphs of Latin square graphs or Steiner systems. 
The main result they showed is as follows:
\begin{lemma}[cf. {\cite[Theorem 1.2]{KLLLLT2023+}}]\label{lower bound of k}
Let $\Gamma$ be a distance-regular graph with diameter $D \geqslant 3$, valency $k$, and $a_1>0$ satisfying the $\operatorname{CAB}_2$ property.
\begin{enumerate}[\normalfont(i)]
  \item If $\Gamma$ is locally a Latin square graph with smallest eigenvalue $-m$ for $m \geqslant 3$ and $k > m^2$, then $c_2 \neq m^2$.
  \item If $\Gamma$ is locally the block graph of a Steiner system $S(2,m,n)$ with smallest eigenvalue $-m$ for $m \geqslant 3$ and $k >m(m+1)$, then $c_2 \neq m(m+1)$. 
\end{enumerate}
\end{lemma}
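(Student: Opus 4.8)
The plan is to argue by contradiction. In case (i) I assume $c_2 = m^2$ (in case (ii), $c_2 = m(m+1)$) and derive a structural impossibility once $m \geqslant 3$. First I would pin down the $\operatorname{CAB}_2$ parameters. Since $k = n^2 > m^2$ forces $n > m$, the local graph $\operatorname{LS}_m(n)$ (resp.\ $S_m(n)$) is a genuine primitive strongly regular graph with $\mu' = m(m-1)$ (resp.\ $\mu' = m^2$) and smallest eigenvalue $-m$, so Propositions~\ref{LS} and~\ref{Steiner} apply and give $\gamma_2 = \mu'$ together with $\alpha_2 = m$ (resp.\ $\alpha_2 = m+1$). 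By Lemma~\ref{mu-regular} the $c_2$-graph $\Lambda$ is $\mu'$-regular on $c_2$ vertices, and because it is an induced subgraph of a local graph, interlacing shows its smallest eigenvalue is at least $-m$.

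Next I would exploit that $\operatorname{LS}_m(n)$ and $S_m(n)$ are \emph{geometric}: each vertex lies in exactly $m$ Delsarte cliques (of size $n$, resp.\ $n+1$), and each edge lies in a unique such clique, so the edge set of $\Lambda$ is partitioned by the Delsarte cliques it meets. Writing $\lambda_p = |\Lambda \cap K_p|$ for a clique $K_p$, a double count gives $\sum_p \lambda_p = m\,c_2$ and $\sum_p \binom{\lambda_p}{2} = |E(\Lambda)| = \tfrac12 c_2\mu'$, hence the clean second-moment identity $\sum_p \lambda_p^2 = c_2(\mu'+m)$. Since each $\Lambda \cap K_p$ is a clique of $\Lambda$, the Hoffman clique bound with the ambient smallest eigenvalue $-m$ yields $\lambda_p \leqslant 1 + \mu'/m$, which equals $m$ in case (i) and $m+1$ in case (ii). Feeding this back into the identity forces every nonzero $\lambda_p$ to attain the bound, on exactly $m^2$ active cliques, and I would conclude that $\Lambda$ is the complete multipartite graph with parts of size $m$: namely $K_{m\times m}\cong\operatorname{LS}_m(m)$ in case (i) and $K_{(m+1)\times m}$, the block graph of the affine plane $\operatorname{AG}(2,m)$, in case (ii). For $m=2$ these are exactly the quadrangle and the octahedron of Remark~\ref{rmk:Latin, Steiner}.

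Finally, for $m \geqslant 3$ the $c_2$-graph is complete multipartite with \emph{parts of size $m \geqslant 3$}, and I would rule this out via the classification of $1$-homogeneous distance-regular graphs whose $\mu$-graphs are $K_{t\times p}$: the Cocktail Party case $p=2$ (Theorem~\ref{1-homogenious JK}) corresponds precisely to the surviving $m=2$ families, while the extension to $p\geqslant 3$ in \cite{JK2007} (cf.\ \cite{JMT2010}) leaves no graph with valency exceeding $c_2$, contradicting $k>m^2$ (resp.\ $k>m(m+1)$). I expect the main obstacle to be the two places where $m\geqslant 3$ is genuinely used. First, sharpening the clique bound so that $\Lambda$ is forced to be \emph{exactly} complete multipartite, rather than merely a $\mu'$-regular graph with the right spectral constraints, which requires invoking the $1$-homogeneity of $\Gamma$ to control $s(\Lambda)$. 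Second, the concluding exclusion for part-size $\geqslant 3$; a self-contained alternative would trace the Delsarte-clique structure into $A_2$ and $B_2$ through $\alpha_2$ and $\delta_2$, observe that the $m^2$ active cliques impose a net on the $c_2$ vertices of $\Lambda$, and show that this net cannot be glued consistently into a second local graph once $m\geqslant 3$, whereas for $m=2$ the gluing is exactly what produces the Johnson, folded Johnson, halved-cube, and folded-halved-cube graphs.
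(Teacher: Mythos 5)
First, a point of comparison: the paper does not prove this lemma at all --- it is imported from \cite[Theorem 1.2]{KLLLLT2023+}, and the remark following it only records the strategy of that proof (show that the $c_2$-graph is complete multipartite, then apply the findings of \cite{JMT2010}). Your overall plan coincides with that strategy, but the execution has genuine gaps. The first is minor: you cannot invoke Propositions~\ref{LS} and~\ref{Steiner}, since their hypotheses are $D \geqslant 5$ and $k > G(b)$ (resp.\ $k > F(b)$), which are much stronger than the present hypotheses $D \geqslant 3$ and $k > m^2$ (resp.\ $k > m(m+1)$). This is repairable: under the contradiction hypothesis $c_2 = m^2$ (resp.\ $c_2=m(m+1)$), formula \eqref{pf eq:lem LS} (resp.\ \eqref{pf eq:alpha_2}), which needs only the $\operatorname{CAB}_2$ property and $n>m$, already gives $\alpha_2 = m$ (resp.\ $m+1$), and \eqref{constants2} gives $\gamma_2=\mu'$. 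The second gap is serious: the key inequality $\lambda_p \leqslant 1 + \mu'/m$ does not follow from ``the Hoffman clique bound with the ambient smallest eigenvalue $-m$''. Interlacing gives $s(\Lambda) \geqslant -m$, i.e.\ $|s(\Lambda)| \leqslant m$, so the Hoffman bound applied to the $\mu'$-regular graph $\Lambda$ yields $\omega(\Lambda) \leqslant 1 + \mu'/|s(\Lambda)|$, which is \emph{at least} $1+\mu'/m$; the inequality runs in the wrong direction, and you do not know $s(\Lambda) = -m$ a priori. (The bound $\lambda_p \leqslant m$, resp.\ $m+1$, is in fact provable for $m \geqslant 3$, but by a different argument --- e.g.\ counting edges between $\Lambda \cap K_p$ and $\Lambda \setminus K_p$, using that a vertex off a line of $\operatorname{LS}_m(n)$, resp.\ off a point-clique of $S_m(n)$, has exactly $m-1$, resp.\ $m$, neighbors on it.) Without that bound, the second-moment identity forces nothing, so the identification of $\Lambda$ as $K_{m\times m}$, resp.\ $K_{(m+1)\times m}$, is unsupported; and even granting equality in the bound, passing from ``$\Lambda$ is covered by $m^2$ cliques of size $m$ (resp.\ $m+1$)'' to ``$\Lambda$ is complete multipartite'' requires an argument that non-adjacency is transitive, which you do not give.

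The final step is also not a proof: you appeal to \cite{JK2007} and \cite{JMT2010} with the claim that for part size $\geqslant 3$ the classification ``leaves no graph with valency exceeding $c_2$'', which you neither quote precisely nor verify, and you yourself flag both this step and the complete-multipartite step as open obstacles. Note moreover that your proposed repair of the first obstacle invokes the $1$-homogeneity of $\Gamma$, which is not available here: the lemma assumes only the $\operatorname{CAB}_2$ property. In short, the proposal is a plausible outline consistent with the proof strategy of \cite{KLLLLT2023+}, but the two central steps --- pinning down the $c_2$-graph and deriving the contradiction for $m \geqslant 3$ --- are not established, and the one concrete tool you do specify (Hoffman plus interlacing) fails in the direction you need it.
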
 

\begin{remark}
(i) Let $\Gamma$ be a distance-regular graph that is locally a Latin square graph $\operatorname{LS}_m(n)$ (resp. the block graph of a Steiner system $S(2,m,n)$). 
In \cite{KLLLLT2023+}, it is shown that if $c_2=m^2$ (resp. $c_2=m(m+1)$), then the $c_2$-graph of $\Gamma$ is a complete $m$-partite (resp. $(m+1)$-partite) graph. 
This result, along with the findings of \cite{JMT2010}, is used to complete the proof of Lemma \ref{lower bound of k}.

\medskip
(ii) The icosahedron has $b_1=2$, $\theta_1=\sqrt{5}$ and hence $b = \frac{b_1}{\theta_1 +1} =\frac{1}{2}(-1+\sqrt{5})< 1$.
The only locally connected distance-regular graph with the $\operatorname{CAB}_2$ property that satisfies $b  < 1$ is the icosahedron.
This is because the only connected strongly regular graph with the smallest eigenvalue greater than $-2$ is the pentagon, and the only graph that is locally a pentagon is the icosahedron (cf. \cite[p.~35]{BCN}).
\end{remark}

We are now ready to prove Theorem \ref{thm:main}.
\begin{proof}[Proof of Theorem \ref{thm:main}] 
We assume $c_2\geqslant 2$.
Our proof is divided into two cases: $\Gamma$ is locally disconnected, and $\Gamma$ is locally connected.
First, we assume that $\Gamma$ is locally disconnected.
Since $\Gamma$ contains a quadrangle, by the result of \cite{Terwilliger1985} (cf. \cite[p.~170]{BCN}), we obtain $\theta_1 \leqslant b_1-1$, which implies $b \geqslant 1$.
Moreover, by Corollary \ref{locally disconnected} $\Gamma$ is a regular near $2D$-gon. Therefore, we have (i).

\smallskip
Next, we consider the case where $\Gamma$ is locally connected.
Since $\Gamma$ satisfies the $\operatorname{CAB}_1$ property, every local graph of $\Gamma$ is connected strongly regular with the same parameters.
Let $-m$ be the smallest eigenvalue of a local graph of $\Gamma$. 
Note that a connected strongly regular graph satisfies $-m < -1$ and the only strongly regular graph satisfying $-m > -2$ is a pentagon.
Furthermore, the only graph that is locally pentagonal is the icosahedron.
However, an icosahedron has diameter $3$, which is less than $5$.
Therefore, we have $-m \leqslant -2$. 
By Lemma \ref{lem:local min eig}, it follows that $b\geqslant 1$.

\smallskip
Now, we show that one of the following (ii)--(vi) holds.
We assume that $k > F(b)$; otherwise, we fall into the case (vi).
Since $\Gamma$ is locally a connected strongly regular graph with eigenvalues $a_1>r>s$, it follows that either $\Gamma$ is locally a conference graph or $r$ and $s$ are integers.
If $\Gamma$ is locally a conference graph, then it must be a Taylor graph by Proposition \ref{conference}.
However, $\Gamma$ has $D\geqslant 5$ while a Taylor graph has diameter three. 
Therefore, a local graph of $\Gamma$ is not a conference graph and hence has integral eigenvalues $r$ and $s$. 
We put $s=-m$ and $r=n-m$, where $m\geqslant 2$.
According to Corollary \ref{bound on order of SRG}, a local graph of $\Gamma$ satisfies one of (i)--(iv) listed therein. 
However, Case (iv) of Corollary \ref{bound on order of SRG} is ruled out as $b \geqslant m-1 \geqslant 1$, and, by assumption, we have $k > F(b) > m^{10} > \varphi(m)$.
Therefore, $\Gamma$ is locally either a complete multipartite graph with classes of size $m$, a Latin square graph $\operatorname{LS}_m(n)$, or a Steiner graph $S_m(n)$.
Now, we consider each case.

\smallskip
First, consider the case where $\Gamma$ is locally a complete multipartite graph with classes of size $m$, denoted as $K_{t \times m}$. 
By \cite[Proposition 1.1.5]{BCN}, $\Gamma$ is $K_{(t+1) \times m}$. 
This contradicts the assumption $D \geqslant 5$ since a complete multipartite graph has diameter two. 	

\smallskip
Next, consider the case where $\Gamma$ is locally a Latin square graph $\operatorname{LS}_m(n)$.
Since $k > F(b) > G(b)$, by Proposition \ref{LS} we have $c_2=m^2$, where $m\geqslant 2$.
However, by Lemma \ref{lower bound of k}(i), $c_2$ cannot be equal to $m^2$ for $m\geqslant 3$.
By these comments, it follows $m=2$.
Thus, $\Gamma$ is locally a Latin square graph $\operatorname{LS}_2(n)$ with smallest eigenvalues $-2$ and $c_2=4$.
Observe that $\operatorname{LS}_2(n)$ is isomorphic to the $(n \times n)$-grid.
By Lemma \ref{lem:grid}, $\Gamma$ is the Johnson graph $J(2n,n)$ or a folded Johnson graph $\bar{J}(2n,n)$.

\smallskip
Lastly, consider the case where a local graph of $\Gamma$ is a Steiner graph $S_m(n)$.
Since $k > F(b)$, by Proposition \ref{Steiner} we have $c_2=m(m+1)$, where $m\geqslant 2$.
However, by Lemma \ref{lower bound of k}(ii) $c_2$ cannot be equal to $m(m+1)$ for $m\geqslant 3$.
Consequently, it follows $m=2$.
Thus, $\Gamma$ is locally a Steiner graph $S_2(n)$ with smallest eigenvalue $-2$ and $c_2 = 6$.
By Remark \ref{rmk:Latin, Steiner}(ii), the $c_2$-graph of $\Gamma$ is a $4$-regular graph on $6$ vertices, that is a Cocktail Party graph $K_{3\times2}$. 
Hence, by Theorem \ref{1-homogenious JK} (also refer to the proof of \cite[Theorem 5.3]{JK2003}) $\Gamma$ is a halved $\ell$-cube with $\ell \in \{2D,2D+1\}$ or a folded halved $4D$-cube.
The proof is complete.
\end{proof}

\section{Distance-regular graphs with classical parameters}\label{sec:DRG-cp}
In this section, we discuss $1$-homogeneous distance-regular graphs with classical parameters.
For integers $b$ and $i$, we recall the Gaussian binomial coefficient
\begin{equation*}
	\genfrac{\lbrack}{\rbrack}{0pt}{}{i}{1}= \genfrac{\lbrack}{\rbrack}{0pt}{}{i}{1}_b 
	=\begin{cases} 
		i & \quad \text{if} \quad b = 1;\\
		(b^i-1)/(b-1) & \quad \text{if} \quad b\ne 1.
	\end{cases}
\end{equation*}
Let $\Gamma$ be a distance-regular graph with diameter $D \geqslant 3$.
We say that $\Gamma$ has \emph{classical parameters} $(D,b,\alpha, \beta)$ if its intersection numbers  $\{b_i\}^{D-1}_{i=0}$ and $\{c_i\}^D_{i=1}$ satisfy
\begin{equation}\label{clasic parameters}
	b_i = \left(\genfrac{\lbrack}{\rbrack}{0pt}{}{D}{1} -\genfrac{\lbrack}{\rbrack}{0pt}{}{i}{1}\right)\left(\beta -\alpha \genfrac{\lbrack}{\rbrack}{0pt}{}{i}{1}\right),
	\qquad \qquad
	c_i = \genfrac{\lbrack}{\rbrack}{0pt}{}{i}{1}\left(1 + \alpha \genfrac{\lbrack}{\rbrack}{0pt}{}{i-1}{1}\right).
\end{equation}
Note that the parameter $b$ is an integer, excluding $0$ or $-1$, since $b_2 \neq 0$ and $c_2 \neq 0$. 
Also, from $a_i + b_i + c_i = k = b_0$ $(0 \leqslant i \leqslant D)$, it follows
\begin{align}
	a_i & = \genfrac{\lbrack}{\rbrack}{0pt}{}{i}{1}\left( \beta-1+\alpha\left( \genfrac{\lbrack}{\rbrack}{0pt}{}{D}{1}-\genfrac{\lbrack}{\rbrack}{0pt}{}{i}{1} - \genfrac{\lbrack}{\rbrack}{0pt}{}{i-1}{1} \right) \right) \qquad (0\leqslant i \leqslant D). \label{eq:formula ai}
\end{align}
Observe that $a_1=0$ if and only if $\beta = 1-\alpha b \genfrac{\lbrack}{\rbrack}{0pt}{}{D-1}{1}$.
The eigenvalues of $\Gamma$ are 
\begin{equation}\label{eq:eig CP}
	\theta_i = \genfrac{\lbrack}{\rbrack}{0pt}{}{D-i}{1}\left( \beta - \alpha \genfrac{\lbrack}{\rbrack}{0pt}{}{i}{1} \right) - \genfrac{\lbrack}{\rbrack}{0pt}{}{i}{1} \qquad (0\leqslant i \leqslant D),
\end{equation}
cf. \cite[Corollary 8.4.2]{BCN}.
We note that the eigenvalues are in the natural ordering $\theta_0 > \theta_1> \cdots > \theta_D$ if $b>0$.
We recall a lower bound on the parameter $\beta$.
\begin{lemma}[{\cite[Proposition 1]{JV2017}}]\label{beta}
Let $\Gamma$ be a distance-regular graph with classical parameters $(D,b,\alpha, \beta)$ and $D \geqslant 3$. 
If $b > 0$, then $\beta \geqslant 1 +\alpha \genfrac{\lbrack}{\rbrack}{0pt}{}{D-1}{1}$, with equality if and only if $a_D=0$.
\end{lemma}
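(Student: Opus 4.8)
The statement to prove is Lemma~\ref{beta}: for a distance-regular graph with classical parameters $(D,b,\alpha,\beta)$, $D\geqslant 3$, and $b>0$, we have $\beta \geqslant 1+\alpha\gauss{D-1}{1}$, with equality iff $a_D=0$.

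The plan is to compute $a_D$ directly from the classical-parameter formula~\eqref{eq:formula ai} and read off the claimed inequality as the statement that $a_D\geqslant 0$, which holds automatically since $a_D$ is a nonnegative integer (it is the number $|A_D(x,y)|$). First I would specialize \eqref{eq:formula ai} to $i=D$. Using $\gauss{D}{1}-\gauss{D}{1}=0$, the bracketed factor $\bigl(\gauss{D}{1}-\gauss{D}{1}-\gauss{D-1}{1}\bigr)$ collapses to $-\gauss{D-1}{1}$, so
\begin{equation*}
	a_D = \gauss{D}{1}\left(\beta - 1 - \alpha\gauss{D-1}{1}\right).
\end{equation*}
Since $b>0$ we have $\gauss{D}{1}>0$ (it equals $D$ if $b=1$ and $(b^D-1)/(b-1)>0$ otherwise). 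Because $a_D\geqslant 0$ is an intersection number, dividing by the strictly positive factor $\gauss{D}{1}$ yields $\beta - 1 - \alpha\gauss{D-1}{1}\geqslant 0$, i.e. $\beta\geqslant 1+\alpha\gauss{D-1}{1}$.

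For the equality characterization, the same identity shows that $a_D=0$ holds exactly when $\beta - 1 - \alpha\gauss{D-1}{1}=0$, again because $\gauss{D}{1}\neq 0$. Thus $a_D=0$ if and only if $\beta = 1+\alpha\gauss{D-1}{1}$, which is precisely the equality case. This also matches the remark made just before the lemma that $a_1=0$ corresponds to a particular value of $\beta$; here the analogous endpoint condition is at $i=D$.

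I do not anticipate a genuine obstacle: the entire argument rests on correctly evaluating the Gaussian-binomial bracket at $i=D$ and invoking nonnegativity of the intersection number $a_D$. The only point requiring minor care is verifying that $\gauss{D}{1}>0$ under the hypothesis $b>0$ (covering both the $b=1$ and $b\neq 1$ branches of the definition) so that the sign of $a_D$ is governed entirely by the sign of $\beta-1-\alpha\gauss{D-1}{1}$.
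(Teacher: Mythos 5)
Your proof is correct: specializing \eqref{eq:formula ai} to $i=D$ gives $a_D = \gauss{D}{1}\bigl(\beta-1-\alpha\gauss{D-1}{1}\bigr)$, and since $\gauss{D}{1}=1+b+\cdots+b^{D-1}>0$ when $b>0$, nonnegativity of the intersection number $a_D$ yields the bound, with equality exactly when $a_D=0$. The paper itself states this lemma as a citation to \cite{JV2017} without reproving it, and your argument is precisely the standard one underlying that reference, so there is nothing to add.
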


We now consider $1$-homogeneous distance-regular graphs with classical parameters and $a_1 > 0$, and obtain the following result.

\begin{theorem}\label{classical}
Let $\Gamma$ be a $1$-homogeneous distance-regular graph with classical parameters $(D,b,\alpha, \beta)$ and $a_1>0$, where $D \geqslant 5$ and $b \geqslant 1$. Then one of the following holds:
\begin{enumerate}[\normalfont(i)]
  \item $\alpha=0$,
  \item $\Gamma$ is a Johnson graph $J(2D,D)$,
  \item $\Gamma$ is a halved $\ell$-cube with $\ell \in \{2D,2D+1\}$,
  \item $\Gamma$ is a folded Johnson graph $\bar{J}(4D,2D)$,
  \item $\Gamma$ is a folded halved $4D$-cube,
  \item $D \leqslant 9$, $\alpha>0$, $b\geqslant2$.
\end{enumerate}
\end{theorem}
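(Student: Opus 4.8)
The plan is to combine Theorem~\ref{thm:main} with the classical-parameter machinery to split off the cases where $\alpha=0$ or the valency is small, and then to convert the remaining valency bound into a diameter bound. First I would dispose of the case $\alpha=0$, which is exactly outcome (i); so assume $\alpha>0$ throughout the rest of the argument. Since $\Gamma$ has classical parameters with $b\geqslant 1$, the eigenvalues are in natural order by \eqref{eq:eig CP}, and from $a_1>0$ together with \eqref{eq:formula ai} one checks that $c_2\geqslant 2$: indeed $c_2=\gauss{2}{1}(1+\alpha)=(b+1)(1+\alpha)$, and with $b\geqslant 1$ and $\alpha>0$ this is at least $2$ (in fact at least $4$). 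Thus the hypothesis $c_2=1$ of Theorem~\ref{thm:main} fails, so one of the conclusions (i)--(vi) of that theorem applies to $\Gamma$.

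Next I would match up the outcomes of Theorem~\ref{thm:main} with those of the present statement. Conclusions (ii)--(v) of Theorem~\ref{thm:main} are precisely (ii)--(v) here, so these carry over directly. The regular near $2D$-gon case (Theorem~\ref{thm:main}(i)) must be handled separately: I would argue that a $1$-homogeneous regular near $2D$-gon with classical parameters and $\alpha>0$ either collapses into one of the already-listed families or else forces $\alpha=0$, using the relation $a_i=c_ia_1$ characteristic of near polygons together with \eqref{eq:formula ai}. The remaining possibility is Theorem~\ref{thm:main}(vi), namely $k\leqslant F(b)$ for the explicit degree-$10$ polynomial $F$ in \eqref{poly: F}. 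This is the case I must push into outcome (vi), which asserts $D\leqslant 9$, $\alpha>0$, $b\geqslant 2$.

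The key mechanism for the last step is that, for fixed $b$, the valency $k=b_0$ grows with the diameter $D$. Explicitly, $k=b_0=\gauss{D}{1}\beta$, and by Lemma~\ref{beta} (applicable since $b>0$) we have $\beta\geqslant 1+\alpha\gauss{D-1}{1}$; combined with $\alpha>0$ and $\alpha$ being a positive integer (or at least bounded below), this gives a lower bound on $k$ that is increasing and roughly of order $b^{2D-1}$. Setting this lower bound against the upper bound $k\leqslant F(b)$ yields an inequality of the form $b^{2D-1}\lesssim F(b)$, where $F$ has degree $10$; solving for $D$ forces $D$ to be bounded. I would also need $b\geqslant 2$: if $b=1$ then the classical-parameter graphs with $\alpha>0$ are essentially Johnson/halved-cube type and must already have appeared among (ii)--(v), so the surviving case of (vi) has $b\geqslant 2$. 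The numerology should be arranged so that the exponent comparison gives exactly $D\leqslant 9$.

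The main obstacle I expect is making the valency lower bound fully rigorous and tight enough to yield the sharp constant $9$ rather than some weaker bound. The crude estimate $k\geqslant \gauss{D}{1}(1+\alpha\gauss{D-1}{1})$ with $\alpha\geqslant 1$ and $b\geqslant 2$ must be compared carefully against the degree-$10$ polynomial $F(b)$ across all $b\geqslant 2$ simultaneously, since $b$ is not fixed; the delicate point is that both sides depend on $b$, so I cannot simply fix $b$ and let $D$ grow. The right approach is to observe that $\gauss{D}{1}_b$ and $\gauss{D-1}{1}_b$ are each at least $b^{D-1}$ and $b^{D-2}$ respectively, giving $k\geqslant b^{2D-3}$ (up to lower-order terms), and then to verify that $b^{2D-3}>F(b)$ whenever $D\geqslant 10$ and $b\geqslant 2$, since then $2D-3\geqslant 17>10=\deg F$ and the leading coefficient comparison closes the gap for all $b\geqslant 2$. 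Checking this single polynomial inequality $b^{17}>F(b)$ for $b\geqslant 2$ is elementary, and it is precisely the step that pins the bound at $D\leqslant 9$.
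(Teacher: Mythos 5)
Your overall strategy is the same as the paper's: get $c_2\geqslant 2$, invoke Theorem~\ref{thm:main}, split off $\alpha=0$ and $b=1$, and convert case (vi) of Theorem~\ref{thm:main} into a diameter bound via Lemma~\ref{beta} and $\alpha\geqslant 1/(1+b)$. Your explicit treatment of the near-$2D$-gon case is a nice touch (indeed, $a_2=c_2a_1$ combined with \eqref{eq:formula ai} and \eqref{clasic parameters} gives $\alpha\bigl[\beta+b+\alpha(\gauss{D}{1}-1)\bigr]=0$, forcing $\alpha=0$; the paper leaves this implicit). However, your final step contains a genuine error, and it occurs exactly at the point that determines the constant $9$. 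You propose to replace the exact lower bound $k>\frac{1}{1+b}\gauss{D}{1}\gauss{D-1}{1}=\frac{(b^D-1)(b^{D-1}-1)}{(b+1)(b-1)^2}$ by the power estimate $b^{2D-3}$ and then check the single inequality $b^{17}>F(b)$ for all $b\geqslant 2$. That inequality is \emph{false} at $b=2$: one has $2^{17}=131072$ while $F(2)=154401$. (Your estimate $k\geqslant b^{2D-3}$ also silently discards the factor $1/(1+b)$, which is not a lower-order term; the honest crude bound is $k>b^{2D-3}/(1+b)$, which is smaller still.) Carried out rigorously, your argument at $b=2$ only produces a contradiction for $D\geqslant 11$, so it proves $D\leqslant 10$, not $D\leqslant 9$.

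The underlying reason is that the margin at the critical pair $(b,D)=(2,10)$ is razor-thin: the exact bound gives $k>\frac{(2^{10}-1)(2^{9}-1)}{3}=174251$ against $F(2)=154401$, a ratio of only about $1.13$. Any simplification that loses a constant factor — dropping $1/(1+b)$, or replacing $\gauss{D}{1}=\frac{b^D-1}{b-1}$ by $b^{D-1}$ (a loss of roughly a factor $2$ when $b=2$) — destroys the conclusion. This is precisely why the paper does not simplify: it keeps the exact rational expression $\frac{(b^D-1)(b^{D-1}-1)}{(b+1)(b-1)^2}$ and verifies (by computer) that it exceeds $F(b)$ for all $D\geqslant 10$ and $b\geqslant 2$. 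To repair your proof, retain the exact Gaussian-binomial expression and check the boundary cases (in particular $b=2$, $D=10$) by direct computation rather than by an exponent comparison. Two further, more easily fixable, omissions: you split into $\alpha=0$ and $\alpha>0$ without excluding $\alpha<0$, which requires the paper's integrality argument ($\alpha(1+b)\in\mathbb{Z}$ together with $c_3>0$ gives $\alpha\geqslant 0$); and you silently identify the classical parameter $b$ with the quantity $b_1/(\theta_1+1)$ appearing in Theorem~\ref{thm:main} — true, but it needs the one-line verification the paper performs by evaluating \eqref{clasic parameters} and \eqref{eq:eig CP} at $i=1$.
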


\begin{proof}
Let $\{\theta_i\}^D_{i=0}$ be the eigenvalues of $\Gamma$ as in \eqref{eq:eig CP}. 
Since $b\geqslant 1$, it follows $\theta_0 > \theta_1> \cdots > \theta_D$.
We observe that $b = {b_1}/(\theta_1 + 1)$ by evaluating $b_i$ in \eqref{clasic parameters} and $\theta_i$ in \eqref{eq:eig CP} at $i=1$.
Moreover, $c_2 \geqslant 2$ since every distance-regular graph with classical parameters and diameter at least three satisfies $c_i < c_{i+1}$ for $0\leqslant i \leqslant D-1$ (cf. \cite[Theorem 6.1.2]{BCN}).
Therefore, by Theorem \ref{thm:main}, $\Gamma$ is either a regular near $2D$-gon or falls into one of (ii)--(vi) therein.

\smallskip
We consider the parameter $\alpha$. 
Evaluating $c_i$ in \eqref{clasic parameters} at $i=2$ gives $c_2=(1+b)(1+\alpha)$, which implies that $\alpha(1+b)$ is an integer. 
Next, evaluating $c_i$ in \eqref{clasic parameters} at $i=3$ gives $c_3 = (1+b+b^2)(1+\alpha(1+b))$. Since $1+b+b^2 > 0$ and $c_3 > 0$, it follows $1+\alpha(1+b) > 0$.
As $\alpha(1+b)$ is an integer, we have $\alpha(1+b) \geqslant 0$.
Since $1+b > 0$, we obtain $\alpha \geqslant 0$.

\smallskip
We consider the case where $\alpha>0$; otherwise, we obtain (i).
Since $b\geqslant 1$, we divide the argument into two cases: either $b=1$ or $b\geqslant2$. 
If $b=1$, all distance-regular graphs with classical parameters $(D,1,\alpha,\beta)$ are known: the Gosset graph (with $\alpha =4$), the Johnson graphs (with $\alpha =1$) and the halved $\ell$-cubes, where $\ell \in \{2D,2D+1\}$ (with $\alpha =2$), the folded Johnson graphs (with $\alpha=1$), and the folded halved $4D$-cubes (with $\alpha=2)$; cf {\cite[Sections 6.1, 6.3]{BCN}}. 
However, we rule out the Gosset graph since it has diameter $3 < 5$. 
Therefore, by Theorem \ref{thm:main}, $\Gamma$ is a Johnson graph $J(2D,D)$, a halved $\ell$-cube with $\ell \in \{2D,2D+1\}$, a folded Johnson graph $\bar{J}(4D,2D)$, or
a folded halved $4D$-cube.

\smallskip
Next, consider the case where $b\geqslant2$ with $\alpha>0$. 
Since $\alpha(1 + b)$ is a positive integer, it follows that $\alpha \geqslant 1/(1 + b)$.
Using this together with Lemma \ref{beta}, we have $\beta > \alpha \genfrac{\lbrack}{\rbrack}{0pt}{}{D-1}{1} \geqslant \frac{1}{1+b}\genfrac{\lbrack}{\rbrack}{0pt}{}{D-1}{1}$.
Note that $k=\beta \genfrac{\lbrack}{\rbrack}{0pt}{}{D}{1}$.
By these comments, we have
\begin{align*}
	k = \beta \genfrac{\lbrack}{\rbrack}{0pt}{}{D}{1}
	> \frac{1}{1+b}\genfrac{\lbrack}{\rbrack}{0pt}{}{D}{1}\genfrac{\lbrack}{\rbrack}{0pt}{}{D-1}{1}
	= \frac{(b^D-1)(b^{D-1}-1)}{(b+1)(b-1)^2}.
\end{align*}
Using a computer, we find that $\frac{(b^D-1)(b^{D-1}-1)}{(b+1)(b-1)^2} >F(b)$ for $D\geqslant 10$, where $F(b)$ is from \eqref{poly: F}\footnote{
We note that the inequality $\frac{(b^D-1)(b^{D-1}-1)}{(b+1)(b-1)^2} >F(b)$ also holds for $D=8$ and $b\geqslant 6$, and for $D=9$ and $b\geqslant 3$.}.
In other words, for $D\geqslant 10$ we have $k > F(b)$.
However, by Theorem \ref{thm:main}, the inequality $k \leqslant F(b)$ must hold. 
Therefore, we have $D \leqslant 9$. 
The proof is complete.
\end{proof}

\begin{remark}
(i) We give a comment on the case (i) of Theorem \ref{classical}. 
In a subsequent paper, we will show that if $\Gamma$ is a $1$-homogeneous distance-regular graph with $a_1>0$ and classical parameters $(D,b,0,\beta)$ where $D\geqslant 5$, then $\Gamma$ is locally a disjoint union of cliques. 
Therefore, by Lemma \ref{locally disconnected}, $\Gamma$ is a regular near $2D$-gon. 
Consequently, according to {\cite[Theorem 9.4.4]{BCN}}, $\Gamma$ is either a dual polar graph or a Hamming graph. 

\smallskip
(ii) The unitary dual polar graphs $U(2D,r)$\footnote{This family of dual polar graphs is also denoted by ${}^2A_{2D-1}(r)$.} have two distinct types of classical parameters:
\begin{align}
	& (D, b, \alpha, \beta) = \left( D, q^2, 0, q \right), \label{CP_DPG:(1)}\\
	& (D, b, \alpha, \beta) = \left( D, -q, \frac{q(1+q)}{1-q}, \frac{q\left(1+(-q)^D\right)}{1-q}\right), \label{CP_DPG:(2)}
\end{align}
where $q$ is a prime power and $r = q^2$; see \cite[Section 6.2]{BCN}.
The graphs $U(2D, r)$ are $1$-homogeneous with $a_1 = q - 1 > 0$, and as such, when $D \geqslant 5$, they correspond to instances of case (i) of Theorem \ref{classical} under the parametrization \eqref{CP_DPG:(1)}.
The graph $U(2D, r)$ with classical parameters \eqref{CP_DPG:(2)} corresponds to none of the cases of Theorem 6.2 because $\alpha \neq 0$ and $b < 0$.
\end{remark}

\section{Tight distance-regular graphs}\label{sec:tightDRGs}

Let $\Gamma$ be a distance-regular graph with $D\geqslant 3$ and eigenvalues $k=\theta_0> \theta_1 > \cdots > \theta_D$.
Juri\v si\'c, Koolen and Terwilliger established the following so-called fundamental bound for $\Gamma$:
\begin{equation}\label{fundamental bound}
	\left( \theta_1 +\frac{k}{a_1 +1} \right)\left( \theta_D +\frac{k}{a_1 +1}\right ) \geqslant -\frac{ka_1b_1}{(a_1 +1)^2},
\end{equation}
cf. \cite[Theorem 6.2]{JKT2000}.
The graph $\Gamma$ is said to be \emph{tight} whenever $\Gamma$ is not bipartite and equality holds in \eqref{fundamental bound}.
We note that if $\Gamma$ is tight, then $a_1 > 0$; cf. \cite[Corollary 6.3]{JKT2000}.
We recall some characterizations of tight distance-regular graphs.
\begin{lemma}[cf. {\cite[Theorem 11.7, Theorem 12.6]{JKT2000}}]\label{char tightDRGs}
Let $\Gamma$ be a distance-regular graph with diameter $D\geqslant 3$. 
Then the following are equivalent.
\begin{enumerate}[\normalfont(i)]
	\setlength\itemsep{0pt}
	\item $\Gamma$ is tight.
	\item $\Gamma$ is $1$-homogeneous with $a_1 > 0$ and $a_D =0$.
	\item Every local graph of $\Gamma$ is connected strongly regular with eigenvalues $a_1>r>s$, where
	$$
		r= -1-\frac{b_1}{\theta_D+1}, \qquad s = -1-\frac{b_1}{\theta_1+1}.
	$$
\end{enumerate}
\end{lemma}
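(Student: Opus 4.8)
The plan is to prove the two non-trivial equivalences (i) $\Leftrightarrow$ (iii) and (ii) $\Leftrightarrow$ (iii) separately, using a spectral (subconstituent) analysis for the first and the $\operatorname{CAB}$ machinery of Lemmas \ref{CAB_1} and \ref{CAB_D} for the second. The whole argument hinges on the local graphs $\Delta(x)$, so I would set up notation for their spectra at the outset and track how the two extreme local eigenvalues are pinned down by the global data $\theta_1$, $\theta_D$, $b_1$.

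For (i) $\Leftrightarrow$ (iii), the engine is the local eigenvalue bound: for every vertex $x$ and every eigenvalue $\eta \ne a_1$ of $\Delta(x)$,
\[ s := -1 - \frac{b_1}{\theta_1 + 1} \;\leqslant\; \eta \;\leqslant\; -1 - \frac{b_1}{\theta_D + 1} =: r. \]
I would first derive this bound by a subconstituent interlacing argument, lifting a non-principal eigenvector of $\Delta(x)$ to a global vector and comparing its Rayleigh quotient against the eigenspaces for $\theta_1$ and $\theta_D$; the same computation, aggregated over the local spectrum, reproduces the fundamental bound \eqref{fundamental bound}. The crucial point is that equality in \eqref{fundamental bound} forces every non-principal eigenvalue of every $\Delta(x)$ to equal $r$ or $s$. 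A connected $a_1$-regular graph with only the three eigenvalues $a_1, r, s$ is strongly regular, so (i) gives (iii). Conversely, substituting the stated values of $r$ and $s$ back into the identities relating $r, s$ to the strongly regular parameters and to $\theta_1, \theta_D$ yields equality in \eqref{fundamental bound}, while connectedness of the strongly regular local graph forces $a_1 > 0$ (hence triangles and non-bipartiteness), giving (i).

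For (ii) $\Leftrightarrow$ (iii), suppose first (iii). The local graphs are connected strongly regular with common parameters, so $\Gamma$ has the $\operatorname{CAB}_1$ property. I would then bootstrap from $\operatorname{CAB}_1$ to the full $\operatorname{CAB}$ property, propagating equitability through the partitions $\operatorname{CAB}_i(x,y)$ by using that the fixed local spectrum $a_1 > r > s$ forces the quotient matrices $Q_i$ in \eqref{Qi} and the parameters in \eqref{formulas: parameters}--\eqref{formula: b_i}; by Lemma \ref{CAB_D} this makes $\Gamma$ $1$-homogeneous with $a_1 > 0$, the terminal behavior of the recursion leaving the top partition $\operatorname{CAB}_D(x,y)$ with no $A_D$-block, that is, $a_D = 0$. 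For the converse, a $1$-homogeneous graph with $a_1 > 0$ has the $\operatorname{CAB}$ property by Lemma \ref{CAB_D}, so by Lemma \ref{CAB_1} each local graph is either a connected strongly regular graph or a disjoint union of cliques; the latter (locally disconnected) case produces a regular near $2D$-gon, for which $a_D = c_D a_1 > 0$, contradicting the hypothesis $a_D = 0$. Hence the local graphs are connected strongly regular, and $a_D = 0$ fed through \eqref{formula: b_i} pins their two non-principal eigenvalues to the tight values $r$ and $s$, which is (iii).

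The main obstacle is the spectral half: establishing the local eigenvalue bound and the exact equality condition for \eqref{fundamental bound}. This is the subconstituent interlacing analysis that forms the technical core of \cite{JKT2000}, and carrying it out rigorously, rather than merely quoting the extremal configuration, is the delicate step. A secondary difficulty is the bootstrap from $\operatorname{CAB}_1$ to $\operatorname{CAB}_D$ in the direction (iii) $\Rightarrow$ (ii): equitability of the higher partitions is not automatic from the local structure alone, and one must verify that the forced parameters in \eqref{formulas: parameters}--\eqref{formula: b_i} stay consistent through all layers and that the terminal partition has $a_D = 0$.
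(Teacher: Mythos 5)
There is nothing in the paper itself to compare against: the paper states this lemma as a quoted result (cf.\ \cite[Theorems 11.7 and 12.6]{JKT2000}) and gives no proof, so your proposal must be judged against the arguments in that reference. Your half (i) $\Leftrightarrow$ (iii) is essentially correct and is the standard argument: with $r$ and $s$ as stated, Terwilliger's subconstituent bound gives $s \leqslant \eta \leqslant r$ for every non-principal eigenvalue $\eta$ of every local graph, and summing $(\eta-r)(\eta-s)\leqslant 0$ over the $k-1$ non-principal local eigenvalues (using $\sum \eta = -a_1$, $\sum \eta^2 = ka_1-a_1^2$, and $k-1-a_1=b_1$) rewrites \emph{exactly} as the fundamental bound \eqref{fundamental bound}, with equality if and only if every such $\eta$ lies in $\{r,s\}$. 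The one point you assert rather than prove is connectedness of the local graphs in (i) $\Rightarrow$ (iii); it does follow, but needs the observation that a disconnected local graph has $a_1$ as a non-principal eigenvalue, forcing $a_1=r$ and every component to have spectrum $\{a_1,s\}$, hence to be a clique with $s=-1$, contradicting $s=-1-b_1/(\theta_1+1)<-1$.

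The genuine gap is in your (iii) $\Rightarrow$ (ii). The proposed ``bootstrap'' from $\operatorname{CAB}_1$ to the full $\operatorname{CAB}$ property is circular: the quotient matrices $Q_i$ in \eqref{Qi} and the parameter formulas \eqref{formulas: parameters}--\eqref{formula: b_i} of Lemma \ref{lem:CAB parameters} are only defined \emph{under the hypothesis} that the $\operatorname{CAB}_i$ partitions are equitable, so they cannot be used to establish equitability. Moreover, equitability of $\operatorname{CAB}_i(x,y)$ for $i\geqslant 2$ genuinely does not follow from the local spectrum alone --- locally strongly regular distance-regular graphs need not be $1$-homogeneous --- so it is precisely the tight values of $r$ and $s$ that do the forcing, and that forcing is the technical core of \cite[Theorem 11.7]{JKT2000} (the tight-edge/representation machinery, or the equivalent local analysis of \cite{JK2000-1}), which your sketch replaces by an assertion. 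The claim that ``the terminal behavior of the recursion'' yields $a_D=0$ is likewise unjustified: $a_D=0$ is a separate consequence of tightness requiring its own proof. The clean repair is to route this implication through (i): you already have (iii) $\Rightarrow$ (i), and then tight $\Rightarrow$ $1$-homogeneous with $a_1>0$ and $a_D=0$ must be proved (or invoked) as in \cite{JKT2000}. Your converse direction (ii) $\Rightarrow$ (iii) is fine in outline --- in particular the locally disconnected case is correctly excluded because a regular near $2D$-gon has $a_D=c_Da_1>0$ --- though pinning $r,s$ from $a_D=0$ again compresses a nontrivial computation.
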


\medskip
In the following result, we characterize tight distance-regular graphs with diameter $D \geqslant 5$.

\begin{theorem}\label{tight corollary}
Let $\Gamma$ be a tight distance-regular graph with diameter $D \geqslant 5$. Let $b=b_1/(\theta_1 +1)$. Then $b \geqslant 1$ and one of the following holds:
\begin{enumerate}[\normalfont(i)]
  \item $\Gamma$ is a Johnson graph $J(2D,D)$,
  \item $\Gamma$ is a halved $2D$-cube,
  \item $\Gamma$ is locally connected with $k \leqslant F(b)$, where $F(b)$ is from \eqref{poly: F}.
\end{enumerate}
\end{theorem}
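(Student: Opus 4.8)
The plan is to reduce Theorem~\ref{tight corollary} to the main result, Theorem~\ref{thm:main}, by exploiting the equivalence in Lemma~\ref{char tightDRGs}. Since $\Gamma$ is tight, condition (ii) of Lemma~\ref{char tightDRGs} tells us that $\Gamma$ is $1$-homogeneous with $a_1>0$ and $a_D=0$, and condition (iii) tells us that every local graph is connected strongly regular with smallest eigenvalue $s=-1-b_1/(\theta_1+1)=-1-b$. First I would record that $\Gamma$ is locally connected (this is built into Lemma~\ref{char tightDRGs}(iii)), so the locally disconnected alternative of Theorem~\ref{thm:main} does not arise here. Next I would establish the bound $b\geqslant 1$: because a connected strongly regular graph has smallest eigenvalue $\leqslant -2$ (the only exception being the pentagon, whose only locally-pentagonal realization is the icosahedron of diameter $3<5$), we get $s\leqslant -2$, i.e.\ $-1-b\leqslant -2$, hence $b\geqslant 1$. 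This is exactly the argument used in the locally-connected case of the proof of Theorem~\ref{thm:main}, and I would simply invoke it.

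Having secured $b\geqslant 1$ and $a_1>0$, I would apply Theorem~\ref{thm:main} directly. Since $\Gamma$ is locally connected, its local graphs are connected strongly regular, so $c_2\geqslant 2$ (a Terwilliger-type obstruction to $c_2=1$ is incompatible with a connected strongly regular local graph of diameter two; alternatively one checks $c_2=1$ forces the local graph to be a disjoint union of cliques by Lemma~\ref{CAB_1}, contradicting connectivity). Thus the dichotomy of Theorem~\ref{thm:main} places $\Gamma$ into one of the cases (i)--(vi) listed there. The task then becomes eliminating the cases incompatible with tightness, and in particular with the condition $a_D=0$ coming from Lemma~\ref{char tightDRGs}(ii).

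The main work is this case analysis against $a_D=0$. I would examine each surviving family. A Johnson graph $J(2D,D)$ satisfies $a_D=0$, giving case (i). For the halved $\ell$-cube, $a_D=0$ holds precisely for $\ell=2D$ (the halved $2D$-cube), whereas the halved $(2D+1)$-cube has $a_D\neq 0$; I would verify this by computing $a_D$ from the known intersection arrays, thereby discarding the $\ell=2D+1$ subcase and yielding case (ii). The folded Johnson graph $\bar J(4D,2D)$ and the folded halved $4D$-cube are antipodal, hence have $a_D\neq 0$ (indeed $a_D=c_D a_1$-type behavior at the antipodal class forces $a_D>0$), so both are excluded by $a_D=0$. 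The regular near $2D$-gon case also requires scrutiny: a tight graph cannot be a regular near polygon in the relevant range, or one argues that $a_D=0$ in a regular near $2D$-gon forces $c_D a_1=0$, contradicting $a_1>0$; this eliminates case (i) of Theorem~\ref{thm:main}. What remains is the bounded-valency alternative, $k\leqslant F(b)$, which is precisely case (iii) of the present theorem and requires no further elimination.

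The hardest part will be the bookkeeping in the $a_D=0$ eliminations, specifically confirming the exact condition on $\ell$ for the halved $\ell$-cube and ruling out the two folded families and the near-polygon case cleanly. These hinge on antipodality: an antipodal distance-regular graph that is not bipartite generally has $a_D>0$, so I would phrase the elimination through the antipodal structure of the folded graphs rather than computing each $a_D$ by hand. The one subtlety to handle carefully is ensuring that the near $2D$-gon case is genuinely incompatible with tightness under $a_D=0$ and $a_1>0$, since regular near $2D$-gons are themselves a classical source of tight-like behavior; here I would lean on $a_D=c_D a_1>0$ for a near $2D$-gon with $a_1>0$, which directly contradicts $a_D=0$, closing that case.
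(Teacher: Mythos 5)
Your overall route is the same as the paper's: use Lemma~\ref{char tightDRGs} to extract $1$-homogeneity, $a_1>0$, $a_D=0$, and local connectivity (hence $c_2\geqslant 2$), then feed $\Gamma$ into Theorem~\ref{thm:main} and strike out every family inconsistent with $a_D=0$. Your handling of $b\geqslant 1$ (via the local smallest eigenvalue $s=-1-b\leqslant -2$), of the regular near $2D$-gon (via $a_D=c_Da_1>0$), and of the halved $(2D+1)$-cube (computing $a_D$ from the intersection array) all match the paper's proof in substance; skipping the paper's detour through the locally-Latin-square/locally-Steiner dichotomy is harmless, since the elimination can indeed be run directly on the list (i)--(vi) of Theorem~\ref{thm:main}.

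However, your elimination of the folded Johnson graph $\bar J(4D,2D)$ and the folded halved $4D$-cube rests on a false principle. You assert that these graphs are antipodal and that ``an antipodal distance-regular graph that is not bipartite generally has $a_D>0$.'' Both claims are wrong, and the second is wrong in the worst direction: every antipodal distance-regular graph of diameter $D\geqslant 2$ has $a_D=0$, because $\{x\}\cup\Gamma_D(x)$ is an antipodal class, so any two vertices of $\Gamma_D(x)$ are at distance $D$ from each other, hence non-adjacent, giving $a_D=|\Gamma(y)\cap\Gamma_D(x)|=0$. Moreover, the folded graphs here are not antipodal at all---they are the antipodal \emph{quotients} of $J(4D,2D)$ and of the halved $4D$-cube, and are primitive. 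Applied literally, your principle would therefore fail to exclude these two families (if anything, it points the opposite way, since $a_D=0$ is exactly the condition that survives). The exclusions themselves are correct, but they must be obtained as you do for the halved cubes: from the intersection numbers. Concretely, if $\Gamma$ is an antipodal $2$-cover of diameter $2D$ and $\bar\Gamma$ its quotient, then $\bar a_D$ equals $a_D$ of the cover computed at level $D$; for $J(4D,2D)$ this gives $\bar a_D=4D^2-D^2-D^2=2D^2\neq 0$, and for the halved $4D$-cube it gives $\bar a_D=\binom{4D}{2}-2\binom{2D}{2}=4D^2\neq 0$. With that repair---which is exactly the fact the paper invokes---your proof goes through.
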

\begin{proof}
By Lemma \ref{char tightDRGs}, the tight graph $\Gamma$ is locally connected, which implies $c_2\geqslant 2$.
Moreover, $\Gamma$ is $1$-homogeneous with $a_1 > 0$ and $a_D=0$.
Therefore, by Theorem \ref{thm:main}, it follows that $b\geqslant 1$, and $\Gamma$ is either a regular near $2D$-gon or falls into one of cases (ii)--(vi) in Theorem \ref{thm:main}.
However, $\Gamma$ cannot be a regular near $2D$-gon since $a_1 > 0$ and $a_D=0$.
Thus, $\Gamma$ belongs to one of cases (ii)--(vi) therein.

\smallskip
Assume that $k > F(b)$; otherwise, it leads to case (iii).
By Lemma \ref{char tightDRGs}, the tight graph $\Gamma$ is locally a connected strongly regular graph.
We note that $\Gamma$ has $a_D=0$.
In a similar manner to the proof of Theorem \ref{thm:main}, we find that $\Gamma$ is locally either a Latin square graph or a Steiner graph.

\smallskip
If $\Gamma$ is locally a Latin square graph, by Theorem \ref{thm:main}, $\Gamma$ is either a Johnson graph $J(2D,D)$ or a folded Johnson graph $\bar{J}(4D,2D)$. 
However, we rule out the case of the graph $\bar{J}(4D,2D)$ since it has $a_D\ne 0$. 
Therefore, $\Gamma$ is a Johnson graph $J(2D,D)$.

\smallskip
If $\Gamma$ is locally a Steiner graph, then by Theorem \ref{thm:main} $\Gamma$ is either a halved $\ell$-cube with $\ell \in \{2D,2D+1\}$ or a folded halved $4D$-cube. 
The folded halved $4D$-cube cannot occur since it satisfies $a_D \ne 0$, and a tight distance-regular graph must have $a_D = 0$. 
Similarly, the case $\ell = 2D+1$ is excluded because the halved $(2D+1)$-cube also satisfies $a_D \ne 0$. 
Therefore, the only remaining possibility is that $\Gamma$ is a halved $2D$-cube.
This completes the proof.
\end{proof}

\begin{remark}
(i) In cases (i) and (ii) of Theorem \ref{tight corollary}, we have $b = 1$.
Also, note that, except for the halved $2D$-cubes and the Johnson graphs $J(2D, D)$, all known tight distance-regular graphs have diameter $D \leqslant 4$.

\medskip
(ii) Suppose $\Gamma$ is tight. 
In \cite[Theorem 1.3]{KLLLLT2023+}, Koolen et al. showed that if a local graph of $\Gamma$ is neither the block graph of an orthogonal array nor the block graph of a Steiner system, then the valency $k$ (and hence diameter $D$) of $\Gamma$ is bounded by a function in $b$, where $b=b_1/(1+\theta_1) \geqslant 2$. 
They then proposed a conjecture that generalizes this result: if $\Gamma$ is a tight distance-regular graph with $b \geqslant 2$, then the diameter $D$ of $\Gamma$ is bounded by a function in $b$; see \cite[Conjecture 28]{KLLLLT2023+}. 
Since the diameter of a distance-regular graph is bounded in terms of its valency (cf. \cite[Section 4]{BDKM}), it follows that Theorem \ref{tight corollary}(iii) proves this conjecture.
\end{remark}

We finish this section with the following corollary.
\begin{corollary}
Let $\Gamma$ be a tight distance-regular graph of diameter $D \geqslant 5$ with classical parameters $(D, b, \alpha, \beta)$. 
Then $b \geqslant 1$ and one of the following holds:
\begin{enumerate}[\normalfont(i)]
  \item $\Gamma$ is a Johnson graph $J(2D,D)$,
  \item $\Gamma$ is a halved $2D$-cube,
  \item $D \leqslant 9$, $\alpha>0$, and $b\geqslant 2$.
\end{enumerate}
\end{corollary}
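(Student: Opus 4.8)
The plan is to read this corollary off as a direct consequence of Theorem~\ref{tight corollary} together with Theorem~\ref{classical}, using the classical-parameter formulas to discard the incompatible cases. First I would invoke Lemma~\ref{char tightDRGs}: since $\Gamma$ is tight, it is $1$-homogeneous with $a_1>0$ and $a_D=0$, and each of its local graphs is a connected strongly regular graph, so $\Gamma$ is locally connected. Applying Theorem~\ref{tight corollary} then gives $b\geqslant 1$ at once; this settles the first assertion of the corollary and, since the classical parameter $b$ coincides with $b_1/(\theta_1+1)$ (as observed in the proof of Theorem~\ref{classical}), it also furnishes the hypothesis $b\geqslant 1$ required to invoke Theorem~\ref{classical}.

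Next I would apply Theorem~\ref{classical}, whose hypotheses are now all met: $\Gamma$ is $1$-homogeneous with classical parameters $(D,b,\alpha,\beta)$, $a_1>0$, $D\geqslant 5$, and $b\geqslant 1$. Thus $\Gamma$ falls into one of the six cases (i)--(vi) of that theorem, and the task reduces to matching these against the three conclusions of the corollary. The one genuinely substantive step is to eliminate case~(i), namely $\alpha=0$. For this I would use the explicit formula \eqref{eq:formula ai}: setting $\alpha=0$ yields $a_i=\gauss{i}{1}(\beta-1)$, so that $a_1=\beta-1$. Since $a_1>0$ we get $\beta>1$, and therefore $a_D=\gauss{D}{1}(\beta-1)>0$, contradicting the tightness condition $a_D=0$. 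Hence $\alpha=0$ cannot occur.

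The remaining cases are bookkeeping. Case~(ii) of Theorem~\ref{classical} is the Johnson graph $J(2D,D)$, which is conclusion~(i). In case~(iii) the halved $(2D+1)$-cube satisfies $a_D\neq 0$ and is excluded, leaving the halved $2D$-cube, which is conclusion~(ii). Cases~(iv) and~(v), the folded Johnson graph $\bar{J}(4D,2D)$ and the folded halved $4D$-cube, both have $a_D\neq 0$ and are likewise ruled out; these are exactly the exclusions already carried out in the proof of Theorem~\ref{tight corollary}, so I would cite that computation rather than repeat it. Finally, case~(vi) gives $D\leqslant 9$, $\alpha>0$, and $b\geqslant 2$, which is precisely conclusion~(iii).

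I expect the only non-mechanical point to be the elimination of $\alpha=0$ through the identity $a_D=\gauss{D}{1}(\beta-1)$ together with $a_D=0$; everything else is a translation between the case lists of the two theorems. The one detail to watch is to appeal to the $a_D\neq 0$ verifications for the folded graphs and the odd halved cube that were established in proving Theorem~\ref{tight corollary}, so as not to duplicate that work here.
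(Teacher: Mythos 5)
Your overall route is exactly the paper's: its entire proof of this corollary reads ``By Theorems \ref{classical} and \ref{tight corollary}'', and your write-up is the detailed unpacking of that combination. The case matching you perform is correct: with $\alpha=0$, \eqref{eq:formula ai} gives $a_1=\beta-1$ and $a_D=\gauss{D}{1}(\beta-1)=\gauss{D}{1}\,a_1>0$, contradicting $a_D=0$ from Lemma \ref{char tightDRGs}; the halved $(2D+1)$-cube, $\bar{J}(4D,2D)$, and the folded halved $4D$-cube are rightly discarded via $a_D\neq 0$, citing the verifications already made in the proof of Theorem \ref{tight corollary}; and case (vi) of Theorem \ref{classical} is conclusion (iii) verbatim.

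The one step that does not close as written is your bridge to the hypothesis $b\geqslant 1$ of Theorem \ref{classical}. The identity $b=b_1/(\theta_1+1)$ for the \emph{classical} parameter $b$ is derived in the proof of Theorem \ref{classical} \emph{under the assumption} $b\geqslant 1$: that assumption is what makes the eigenvalues in \eqref{eq:eig CP} naturally ordered, so that the $\theta_1$ of that formula is the actual second-largest eigenvalue. Invoking this identity to prove $b\geqslant 1$ is therefore circular, and the inference genuinely fails without it: the unitary dual polar graphs $U(2D,q^2)$ admit a classical parametrization \eqref{CP_DPG:(2)} with $b=-q<0$ even though $b_1/(\theta_1+1)=q^2\geqslant 1$. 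Since the classical $b$ is an integer different from $0$ and $-1$, one must still rule out $b\leqslant -2$, and this needs a separate (short) argument from tightness; for instance, $a_D=0$ forces $\beta-\alpha\gauss{D-1}{1}=1$ by \eqref{eq:formula ai}, whence $b_{D-1}=\bigl(\gauss{D}{1}-\gauss{D-1}{1}\bigr)\bigl(\beta-\alpha\gauss{D-1}{1}\bigr)=b^{D-1}<0$ when $D$ is even, while for $D$ odd one gets $a_1=\alpha(b+1)\gauss{D-1}{1}>0$, forcing $\alpha>0$, against $c_2=(1+b)(1+\alpha)>0$, which forces $\alpha<-1$. In fairness, the paper's one-line proof glosses over this same point, so it is a shared omission; but your proposal asserts the identification as an established fact, and that is precisely where the gap lies.
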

\begin{proof}
By Theorems \ref{classical} and \ref{tight corollary}.
\end{proof}

\section*{Acknowledgements}
The authors thank the anonymous referees for the valuable comments.
J.H. Koolen is partially supported by the National Key R. and D. Program of China (No. 2020YFA0713100), the National Natural Science Foundation of China (No. 12071454), and the Anhui Initiative in Quantum Information Technologies (No. AHY150000). 
M. Abdullah is supported by the Chinese Scholarship Council at USTC, China.
B. Gebremichel is supported by the National Key R. and D. Program of China (No. 2020YFA0713100) and the Foreign Young Talents Program (No. QN2022200003L).
J.-H. Lee was partially supported by the Mathematics Department at Pohang University of Science and Technology (POSTECH). J.-H. Lee completed part of this work during his sabbatical visit to POSTECH and expresses deep gratitude for their hospitality.

\end{document}